\documentclass[12pt]{amsart}

\usepackage{amsthm,latexsym,amssymb,amsfonts,amsmath,url,mathtools,mathrsfs}
	
\usepackage{placeins}
\usepackage{xcolor}
\usepackage[vcentermath,enableskew]{youngtab}
\usepackage{color}
\usepackage[noadjust]{cite}
\usepackage[margin=1in]{geometry}
\usepackage{enumitem, bm}
\usepackage[unicode,psdextra]{hyperref}
\usepackage{tikz-cd}

\usepackage{ytableau}

\usepackage{graphicx}
\graphicspath{{./Images/}}

\usepackage{tikz}
\usetikzlibrary{decorations.markings}

\usetikzlibrary{calc, shapes, backgrounds,arrows,positioning,plotmarks}
\tikzset{>=stealth',
	head/.style = {fill = white, text=black},
	plaque/.style = {draw, rectangle, minimum size = 10mm}, 
	pil/.style={->,thick},
	junct/.style = {draw,circle,inner sep=0.5pt,outer sep=0pt, fill=black}
}

\newtheorem{theorem}{Theorem}[section]
\newtheorem{proposition}[theorem]{Proposition}
\newtheorem{lemma}[theorem]{Lemma}
\newtheorem{conjecture}[theorem]{Conjecture}
\newtheorem{corollary}[theorem]{Corollary}

\theoremstyle{definition}
\newtheorem{definition}[theorem]{Definition}
\newtheorem{remark}[theorem]{Remark}
\newtheorem{setup}[theorem]{Setup}
\newtheorem{examplex}[theorem]{Example}
\newenvironment{example}
{\pushQED{\qed}\examplex}
{\popQED\endexamplex}

\newtheoremstyle{named}{}{}{\itshape}{}{\bfseries}{.}{.5em}{#1 \thmnote{#3}}
\theoremstyle{named}

\numberwithin{equation}{section}

\newcommand{\wt}{\mathrm{wt}}
\newcommand{\Koh}{{\sf Koh}}
\newcommand{\KKoh}{{\sf KKoh}}
\newcommand{\Z}{\mathbb{Z}}
\newcommand{\newword}[1]{\emph{#1}}
\newcommand{\PD}{\mathrm{PD}}
\newcommand{\RPD}{\mathrm{RPD}}
\newcommand{\GKKoh}{{\sf GKKoh}}
\newcommand{\SVRT}{\operatorname{SVRT}}
\newcommand{\SRT}{\operatorname{SRT}}

\newcommand{\K}{\ensuremath{K}}
\newcommand{\SKoh}{{\sf SKoh}}
\newcommand{\SKKoh}{{\sf SKKoh}}

\title[]{Counterexamples to Robichaux's conjecture for Grothendieck polynomials}

\author{Avery St.~Dizier}
\address[AS]{
	Department of Mathematics,
	Michigan State University, East Lansing,
	MI 48824, USA}
\email{stdizier@msu.edu}

\thanks{}

\begin{document}
	\begin{abstract}
		Ross and Yong conjectured a \K-theoretic Kohnert rule for Grothendieck polynomials. Robichaux exhibited a counterexample to the Ross--Yong rule and proposed a revised ghost \K-Kohnert rule, proving both rules hold for 321-avoiding permutations. We provide counterexamples to Robichaux's rule and give an explicit bijection showing that both the Ross--Yong and Robichaux rules hold for 1432-avoiding permutations. As an application, we provide a Kohnert-theoretic characterization of 1432-avoidance.
	\end{abstract}

	\maketitle
	
	\section{Introduction}
	\label{sec:intro}
	
	Schubert polynomials, multivariate polynomials indexed by permutations, were introduced by Lascoux and Sch\"utzenberger \cite{LS1} as polynomial representatives for Schubert classes in the cohomology of the flag variety. Kohnert conjectured a positive diagrammatic rule for Schubert polynomials, constructing a family of diagrams recursively by iterating simple moves \cite{original_kohnert}. A similar recursive diagrammatic rule (with different moves) was then proved by Bergeron \cite{bergeron_moves}. Many combinatorial models for Schubert polynomials have since been found \cite{BJS,nilcoxeter,FKschub,prismtableaux,laddermoves,magyar,bumpless_pipe_dreams,unifiedschubert,balancedtableaux}.
	
	While Bergeron sketched a possible argument that his and Kohnert's rules construct the same family of diagrams \cite{macdonald}, the first full proof of Kohnert's rule was given by Winkel \cite{winkel1}, who later produced a second proof \cite{winkel2}. Motivated by the complexities present in both of Winkel's proofs, Assaf gave a short bijective proof in \cite{assaf_kohnert}. An additional proof was given by Armon, Assaf, Bowling, and Ehrhard in \cite{assaf_flagged_schur}.
	
	Our interest in the present paper is a \K-theoretic generalization of Kohnert's rule. In \K-theory, Grothendieck polynomials represent the classes of structure sheaves of Schubert varieties \cite{LS2}. In \cite{ghost_kohnert}, Ross and Yong gave a conjectured \K-theoretic Kohnert rule. They conjectured that their rule provided a combinatorial model for both Grothendieck polynomials and Lascoux polynomials (the \K-theoretic Demazure characters). Building on work of Pechenik and Scrimshaw on rectangular tableaux \cite{rectangle_k_crystal}, Pan and Yu proved the Ross--Yong rule for Lascoux polynomials via a recursive bijection \cite{lascoux_k_kohnert}. Utilizing the Pan--Yu bijection, Robichaux then disproved the Ross--Yong conjecture, proposed a revised ghost \K-Kohnert rule, and verified that both the Robichaux and Ross--Yong rules hold for 321-avoiding permutations \cite{ghost_kohnert_fix}. 
	
	Our first main result is that Robichaux's conjectured rule is also false. We demonstrate that it can both overcount (Proposition~\ref{prop:ghost_koh_overcounts}) and undercount (Proposition~\ref{prop:undercount_proof}) the coefficients of Grothendieck polynomials. We conjecture that Robichaux's rule correctly computes the support of Grothendieck polynomials (Conjecture~\ref{conj:support}).

	Our second main result is a generalization of Robichaux's 321-avoiding positive result \cite[Theorem 5.1]{ghost_kohnert_fix}. We prove that both the Robichaux and Ross--Yong rules hold for the class of 1432-avoiding permutations. This is accomplished via an explicit bijection to set-valued tableaux of Fan and Guo \cite{fg_svrt} in Theorem~\ref{thm:bijection}. As an application of the bijection, we give a \K-Kohnert-theoretic characterization of 1432-avoiding permutations (Theorem~\ref{thm:1432_kohnert_characterization}).
	
\subsection*{Outline} The paper is organized as follows. In Section~\ref{sec:back}, we review all necessary background for the paper. Then in Section~\ref{sec:conjectures}, we recall the Ross--Yong and Robichaux \K-Kohnert conjectures. We examine counterexamples to both. We construct in Section~\ref{sec:1432} a bijection proving the Ross--Yong conjectures for 1432-avoiding permutations. We then deduce the and Robichaux conjecture as a consequence. As an application of the bijection, we prove in Section~\ref{sec:simple} a new characterization of 1432-avoiding permutations analogous to Gao's \cite[Theorem 4.1]{gao_prin_spec}. Appendix~\ref{sec:appendix} gives explicit move sequences for witness ghost \K-Kohnert diagrams appearing in Section~\ref{sec:conjectures}.

	\section{Background}
	\label{sec:back}
	
	We provide all background on Schubert and Grothendieck polynomials, pipe dreams, and Kohnert diagrams used throughout the paper. Readers familiar with these objects may skip this section and return as needed.
	
	\subsection{Permutations and Patterns}\phantom{}\newline\vspace{-2ex}
	
	Let $S_n$ denote the symmetric group on the set $[n]\coloneqq\{1,2,\ldots,n\}$. We write permutations $w\in S_n$ in one-line notation as words $w(1)w(2)\cdots w(n)$. For example, $w=12543$ is the transposition usually denoted $(35)$, the bijection $[5]\to [5]$ taking $1\mapsto 1$, $2\mapsto 2$, $3\mapsto 5$, $4\mapsto 4$, and $5\mapsto 3$. Write $s_i$ for the transposition swapping $i$ and $i+1$. Let $w_0$ be the longest permutation $n(n-1)\cdots 21$. Denote by $\ell(w)$ the Coxeter length of $w$.
	
	We say $w\in S_n$ contains a pattern $\sigma\in S_k$ (for $k\leq n$) if there are indices $1\leq j_1<j_2<\cdots<j_k\leq n$ such that the numbers $w(j_1),\cdots, w(j_k)$ are in the same relative order as $\sigma(1),\ldots,\sigma(k)$.
	We say that $w$ avoids the pattern $\sigma$ if $w$ does not contain $\sigma$. For example, $w = 154623$ contains the
	pattern $132$, but avoids the pattern $13245$.
	
	\subsection{Schubert and Grothendieck Polynomials}\phantom{}\newline\vspace{-2ex}
	
	Fix any $n\geq 1$ and consider the polynomial ring $\Z[x_1,\ldots,x_n]$. For $1\leq j <n$, let $\partial_j,\overline{\partial}_j$ be the operators on $\Z[x_1,\ldots,x_n]$ defined by
	\[\partial_j(f)=\frac{f-(s_j\cdot f)}{x_j-x_{j+1}}\qquad\mbox{and}\qquad \overline{\partial}_j(f)=\partial_j((1-x_{j+1})f).\]
	Indexed by permutations $w\in S_n$, the \newword{Schubert polynomials} $\mathfrak{S}_w$ and the \newword{Grothendieck polynomials} $\mathfrak{G}_w$ are defined recursively by 
	\begin{align*}
		\mathfrak{S}_w=\begin{cases}
			x_1^{n-1}x_2^{n-2}\cdots x_{n-1}&\mbox{ if } w=w_0,\\
			\partial_j \mathfrak{S}_{ws_j} &\mbox{ if } w(j)<w(j+1);
		\end{cases}
	\end{align*}
	\begin{align*}
		\mathfrak{G}_w=\begin{cases}
			x_1^{n-1}x_2^{n-2}\cdots x_{n-1}&\mbox{ if } w=w_0,\\
			\overline{\partial}_j \mathfrak{G}_{ws_j} &\mbox{ if } w(j)<w(j+1).
		\end{cases}
	\end{align*}
	
	We now recall the pipe dream model for Schubert and Grothendieck polynomials \cite{FKschub,laddermoves,FKgroth}. 
	A \newword{pipe dream} is a filling of the triangular grid $\{(i,j)\in[n]\times[n]\colon i + j \leq n\}$ with crossing tiles \includegraphics[scale=0.5]{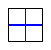} and bump tiles \includegraphics[scale=0.5]{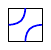}. By requiring half-bump tiles \includegraphics[scale=0.5]{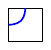} on the antidiagonal $\{(i,j)\in[n]\times[n]\colon i+j = n+1\}$, a pipe dream forms a network of $n$ pipes running from the top edge of the grid to the left edge (see Figure~\ref{fig:1423-pd}). 
	
	Associated to any pipe dream $P$ is a permutation $w\in S_n$ obtained as follows. Label the pipes $1$ through $n$ along the top edge. Trace the pipes downwards and leftwards, carrying their label along. Ignore any crossings between any pair of pipes which have already crossed, i.e.\ treat subsequent crossing tiles as if they were bump tiles. Read the labels of the pipes along the left edge from top to bottom to get the associated permutation $w$. A pipe dream is called \newword{reduced} if no pipes cross twice. 
	
	Write $\PD(w)$ for the set of pipe dreams of $w$, and $\RPD(w)$ for the set of reduced pipe dreams of $w$. The \newword{weight} of a pipe dream $P$ is the vector $\wt(P)$ with $i$\textsuperscript{th} component $\wt(P)_i=\mbox{\# crosses in row $i$ of }P$. For $\alpha\in \Z_{\geq 0}^n$, write 
	\begin{align*}
		\RPD(w,\alpha) &=\{P\in\RPD(w)\mid \wt(P)=\alpha\},\quad\mbox{and}\\
		\PD(w,\alpha) &=\{P\in\PD(w)\mid \wt(P)=\alpha\}.
	\end{align*}
	
	\begin{example}
		\label{exp:1423-pd}
		For $w=1423$, the set $\PD(w)$ consists of the five pipe dreams displayed in Figure~\ref{fig:1423-pd}. Note that bump and half-bump tiles are depicted in blue, first crossings in red, and redundant crossings in purple.
	\end{example}
	
	\begin{figure}[ht]
		\centering
		\includegraphics[scale=1]{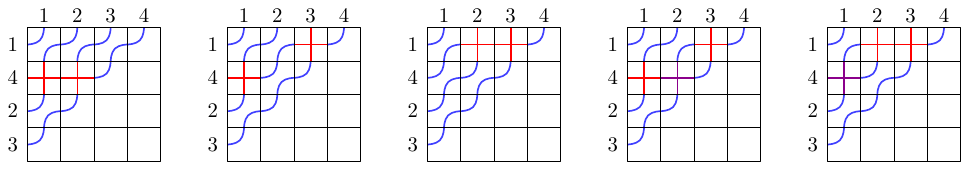}
		\caption{The five pipe dreams in $\PD(1423)$.}
		\label{fig:1423-pd}
	\end{figure}
	
	\begin{theorem}[{\cite{FKschub,laddermoves,FKgroth}}]
		\label{thm:pdformula}
		For any permutation $w\in S_n$,
		\[\mathfrak{S}_w=\sum_{P\in \RPD(w)} x^{\wt(P)}\quad \mbox{and} \quad \mathfrak{G}_w=\sum_{P\in \PD(w)} (-1)^{|\wt(P)|-\ell(w)}x^{\wt(P)}.\]
	\end{theorem}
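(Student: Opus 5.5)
We prove the Grothendieck formula by downward induction on $\ell(w)$ from $w_0$, following the divided-difference recursion. The Schubert formula then follows by taking lowest-degree terms: $\mathfrak{S}_w$ is the degree-$\ell(w)$ homogeneous component of $\mathfrak{G}_w$, and the pipe dreams with exactly $\ell(w)$ crosses are precisely the reduced ones. So it suffices to show that
\[G_w:=\sum_{P\in\PD(w)}(-1)^{|\wt(P)|-\ell(w)}x^{\wt(P)}\]
satisfies $G_{w_0}=x_1^{n-1}\cdots x_{n-1}$ and $G_w=\overline{\partial}_j G_{ws_j}$ whenever $w(j)<w(j+1)$.

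\textbf{Base case.} The base case is direct. A pipe dream for $w_0$ needs all $\binom n2$ pairs of pipes to cross. The grid $\{i+j\le n\}$ has exactly $\binom n2$ tiles, so every tile must be a crossing. This gives the single pipe dream of weight $(n-1,n-2,\ldots,1,0)$.

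\textbf{Inductive step.} For this step I would not manipulate the operator directly. Instead I would fix all rows other than $j$ and $j+1$ and work locally on those two rows. Rewrite $\overline{\partial}_j(f)=\partial_j((1-x_{j+1})f)$. For a fixed configuration outside rows $j,j+1$, the contributions of $PD(ws_j)$ restricted to these rows form a polynomial in $x_j,x_{j+1}$. The task is to show its image under $\overline\partial_j$ equals the corresponding local sum for $w$.

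The standard tool here is a ladder/chute-type decomposition of the two rows into maximal blocks. In each block the tiles of rows $j$ and $j+1$ are either both crossings, both bumps, or mixed. Both-crossing and both-bump blocks contribute symmetric factors, which commute with $\partial_j$. The mixed segments contribute sums of the form $\sum_{a+b=m}x_j^ax_{j+1}^b$ together with signed terms from redundant crossings. On these one checks the identity
\[\overline\partial_j\bigl(x_j^{m+1}\cdot(\text{stuff})\bigr)=\sum(\pm)x_j^ax_{j+1}^b\]
using $\partial_j(x_j^{k})=h_{k-1}(x_j,x_{j+1})$. The factor $(1-x_{j+1})$ accounts exactly for the nonreduced (purple) crossings: adding a crossing at a position where pipes $j,j+1$ would recross contributes the extra $-x_{j+1}$ terms. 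Equivalently, one can run the argument via the Fomin--Kirillov Yang--Baxter/Hecke algebra approach. There, $G_w$ is extracted as the coefficient of $\pi_w$ in a product $\prod(1+x_i\pi_k)$ over the grid, in the $0$-Hecke algebra with $\pi_k^2=-\pi_k$. The recursion then follows from the commutation relation of $\overline\partial_j$ with this product, which is the Yang--Baxter equation for $h_k(x)=1+x\pi_k$.

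\textbf{Main obstacle.} I would follow the Hecke-algebra route, as in \cite{FKgroth}. The key step is showing that the generating function $\mathfrak G(x)=\prod_{i}\prod_{k}h_{i+k-1}(x_i)$ (suitable ordering) satisfies $\overline\partial_j\mathfrak G=\mathfrak G\cdot(\text{right action by }\pi_j)$. This requires the Yang--Baxter relation
\[h_k(x)h_{k+1}(x+y+ \ldots)h_k(y)=h_{k+1}(y)h_k(\cdots)h_{k+1}(x)\]
with the appropriate $K$-theoretic addition law $x\oplus y=x+y-xy$, and verifying that identity is the main computation. I would first verify it directly in the $0$-Hecke algebra on $\{\pi_k,\pi_{k+1}\}$ by expanding both sides. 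I would then use it to swap the row-$j$ and row-$j+1$ variables through the product, which yields symmetry modulo the needed correction.
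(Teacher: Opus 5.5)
Your overall route is the right one: the paper gives no proof of this statement and only cites it, and your Hecke-algebra plan is the Fomin--Kirillov argument from the cited Grothendieck paper. The base case and the reduction of the Schubert formula to lowest-degree terms are fine. The two-row ``ladder/chute block'' sketch is only heuristic and can be dropped.

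\textbf{The key identity is misstated.} Read literally, with your normalization $\pi_k^2=-\pi_k$ and right multiplication by $\pi_j$, the identity you propose to prove is false. Already for $n=2$, $\mathfrak G=1+x_1\pi_1$ gives $\overline\partial_1\mathfrak G=1+\pi_1$, while $\mathfrak G\pi_1=(1-x_1)\pi_1$. The correct statement is $\overline\partial_j\mathfrak G=\mathfrak G\,(1+\pi_j)$. Since $\pi_v(1+\pi_j)$ equals $\pi_v+\pi_{vs_j}$ if $v(j)<v(j+1)$ and $0$ otherwise, comparing coefficients of $\pi_u$ gives $\overline\partial_jG_u=G_{us_j}$ whenever $u(j)>u(j+1)$. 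Taking $u=ws_j$, this is exactly the recursion you need.

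\textbf{The lemma that carries the step is missing.} Yang--Baxter itself is a finite check; what you have not stated is how it yields the identity above, and ``swapping row variables'' does not work directly because rows $j$ and $j+1$ have different lengths. Write $\mathfrak G=A_1(x_1)\cdots A_{n-1}(x_{n-1})$ with $A_i(t)=h_{n-1}(t)\cdots h_i(t)$. The key lemma is $A_j(s)A_j(t)=A_j(t)A_j(s)$, proved from Yang--Baxter, $h_k(s)h_k(t)=h_k(s\oplus t)$, and commutation of distant generators.

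Now set $F=A_j(x_j)A_{j+1}(x_{j+1})$. Appending the missing factor and using the lemma gives
\[F\,h_j(x_{j+1})=A_j(x_j)A_j(x_{j+1})=(s_jF)\,h_j(x_j).\]
Expanding this yields $F-s_jF=x_j(s_jF)\pi_j-x_{j+1}F\pi_j$. Right-multiplying by $1+\pi_j$, and using $h_j(t)(1+\pi_j)=1+\pi_j$, shows that $F(1+\pi_j)$ is symmetric in $x_j,x_{j+1}$. Combining the two facts gives
\[(1-x_{j+1})F-(1-x_j)\,s_jF=(x_j-x_{j+1})\,F(1+\pi_j),\]
that is, $\overline\partial_jF=F(1+\pi_j)$. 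The factors for rows $<j$ do not involve $x_j,x_{j+1}$, and those for rows $>j+1$ commute with $\pi_j$, so the identity lifts to $\mathfrak G$.

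\textbf{A remaining check.} You must verify that the coefficient of $\pi_w$ in the product is the signed sum over $\PD(w)$. Two facts are needed:
\begin{enumerate}
\item $\pi_{a_1}\cdots\pi_{a_m}=(-1)^{m-\ell(\delta)}\pi_\delta$, where $\delta$ is the Demazure product.
\item The paper's rule of ignoring repeated crossings computes exactly the Demazure product of the row reading word: rows top to bottom, each read right to left, with a cross at $(i,k)$ contributing $s_{i+k-1}$.
\end{enumerate}
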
	
	In other words, for any $\alpha\in \Z_{\geq 0}^n$ one has
	\[[x^\alpha]\mathfrak{S}_w = \#\RPD(w,\alpha) \quad \mbox{and} \quad [x^{\alpha}]\mathfrak{G}_w = (-1)^{|\alpha|-\ell(w)} \#\PD(w,\alpha). \]
	
	We will be interested in proposed generalizations of the following family of objects also enumerating $[x^\alpha]\mathfrak{S}_w$.
	
	\subsection{Kohnert Diagrams}\phantom{}\newline\vspace{-2ex}
	
	A \newword{diagram} is a subset $D\subseteq [n]\times [n]$ of the $n\times n$ grid. We index the cells of the grid using matrix notation, so $(1,1)$ refers to the leftmost and topmost cell. We call the elements of a diagram \newword{boxes}. The \newword{weight} of a diagram $D$ is the vector $\wt(D)$ with $i$\textsuperscript{th} component 
	\[\wt(D)_i = \mbox{\# boxes in row $i$ of }D. \]
	
	The \newword{Rothe diagram} of $w\in S_n$ is the diagram 
	\[ D(w)=\{(i,j)\in [n]^2 \mid i<w^{-1}(j)\mbox{ and } j<w(i) \}. \]
	Note the Rothe cells $(i,j)\in D(w)$ are in bijection with the inversions of $w$ via $(i,j)\mapsto (i,w^{-1}(j))$, so $\#D(w)=\ell(w)$.
	
	The Rothe diagram $D(w)$ can be visualized as the set of cells left in the $n\times n$ grid 
	after you cross out all cells weakly below $(i,w(i))$ in the same column, or weakly right of $(i,w(i))$ in the same row for each $i\in [n]$. 
	
	\begin{example}
		For $w=31542$, one has
		\begin{center}
			\includegraphics[scale=1]{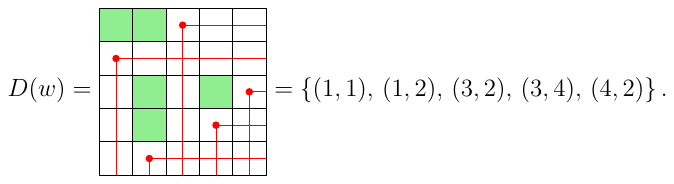}
		\end{center}
		We indicate surviving cells in green, and the removals with red lines.
	\end{example}
	
	Note that Rothe diagrams are always \newword{northwest diagrams}: if $(r, c'),(r', c)\in D(w)$ with $r < r'$ and $c < c'$, then $(r, c) \in D(w)$.
	
	Additionally, the boxes of $D(w)$ are in bijection with the inversions of $w$, so $\#D(w) = \ell(w)$.
	
	\begin{definition}
		For a diagram $D$ and $(i,j)\in D$ rightmost in its row, the \newword{Kohnert move} on $D$ at row $i$ produces the diagram
		\[D' = D\setminus\{(i,j)\} \cup \{(i',j)\} \]
		where $i'=\max\{r\in [i-1]\mid (r,j)\notin D \}$. (The move is not defined when the set is empty.)
	\end{definition}
	
	For a permutation $w$, write $\Koh(w)$ to denote the set of all diagrams obtainable from $D(w)$ by applying successive Kohnert moves. Set $\Koh(w,\alpha)=\{E\in\Koh(w)\mid \wt(E)=\alpha\}$. We will refer to the elements of $\Koh(w)$ as \newword{Kohnert diagrams}.
	
	\begin{example}
		The permutation $w=31542$ has the eight Kohnert diagrams shown in Figure~\ref{fig:kohnert-31542}.
	\end{example}
	
	\begin{figure}[ht]
		\centering
		\includegraphics[scale=1]{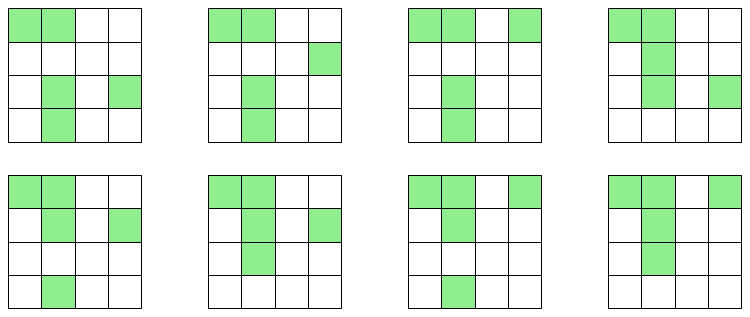}
		\caption{The eight diagrams in $\Koh(w)$ for $w=31542$.}
		\label{fig:kohnert-31542}
	\end{figure}
	
	\begin{theorem}[{\cite{original_kohnert,winkel1,winkel2}}]
		\label{thm:kohnert_rule}
		For any permutation $w\in S_n$ and $\alpha\in \Z_{\geq 0}^n$, 
		\[[x^\alpha]\mathfrak{S}_w = \#\Koh(w,\alpha).\]
	\end{theorem}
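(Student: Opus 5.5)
The plan is to reduce to the pipe dream formula of Theorem~\ref{thm:pdformula} and prove $\#\Koh(w,\alpha)=\#\RPD(w,\alpha)$ by induction on $n-\ell(w)$ along the divided difference recursion, following Assaf's bijective strategy \cite{assaf_kohnert}. Write $\mathcal{K}_w=\sum_{E\in\Koh(w)}x^{\wt(E)}$. The base case $w=w_0$ is immediate. $D(w_0)$ is the staircase $\{(i,j)\colon i+j\leq n\}$, which is top-justified in every column, so no Kohnert move is defined. Hence $\mathcal{K}_{w_0}=x_1^{n-1}\cdots x_{n-1}=\mathfrak{S}_{w_0}$.

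For the inductive step, take $w$ with $w(j)<w(j+1)$ and set $v=ws_j$, so $\ell(v)=\ell(w)+1$ and $\mathfrak{S}_w=\partial_j\mathfrak{S}_v$. It then suffices to show $\mathcal{K}_w=\partial_j\mathcal{K}_v$. I would establish this through two ingredients.

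The first is symmetry. I would show that $\mathcal{K}_v$ is symmetric in $x_j,x_{j+1}$ whenever $v(j)>v(j+1)$. To do this, I would build an involution on $\Koh(v)$ that swaps the number of boxes in rows $j$ and $j+1$. It should act like a crystal reflection: pair boxes in rows $j$ and $j+1$ column by column, then move the unpaired boxes between the two rows. The point to verify is that the resulting diagram is again reachable by Kohnert moves.

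The second is a splitting. Since $\mathcal{K}_v$ is symmetric in $x_j,x_{j+1}$, the operator $\partial_j$ can be computed row by row in those two rows. I would find a subset $\Koh(v)^\ast\subseteq\Koh(v)$ (for instance, diagrams with a marked distinguished box in row $j$) with two properties. First, $\mathcal{K}_v=\sum(x_j^{a}x_{j+1}^{b}+\cdots+x_j^{b}x_{j+1}^{a})$ over suitable strings. Second, deleting the distinguished box gives a weight-preserving bijection onto $\Koh(w)$. Comparing the Rothe diagrams, $D(v)$ is $D(w)$ with rows $j$ and $j+1$ swapped and one extra box added in row $j$. Concretely, $\partial_j$ of a string sum $x_j^{a}x_{j+1}^{b}+\cdots+x_j^{b}x_{j+1}^{a}$ (with $a>b$) is $x_j^{a-1}x_{j+1}^{b}+\cdots+x_j^{b}x_{j+1}^{a-1}$. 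So the string structure from the involution, together with removing one box, gives exactly $\partial_j\mathcal{K}_v=\mathcal{K}_w$.

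The main obstacle is closure: showing that the involution and the box-removal map send Kohnert diagrams to Kohnert diagrams. Kohnert moves are global and not easily invertible, so reachability is hard to certify. I would first try to characterize $\Koh(w)$ intrinsically, replacing reachability by a local condition checkable column by column. One candidate is a Hall-type condition: for each $k$, the boxes in rows $\leq k$ of any column set dominate those of $D(w)$. Each map would then be checked against that characterization. If no clean characterization is available, the fallback is an explicit bijection from $\Koh(w,\alpha)$ to $\RPD(w,\alpha)$, using ladder/chute moves to mirror Kohnert moves. This would be in the spirit of Winkel \cite{winkel1,winkel2}, and it would also rest on the same kind of reachability argument.
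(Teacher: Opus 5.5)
The paper does not prove this statement. It quotes Kohnert's rule from \cite{original_kohnert,winkel1,winkel2}, with later proofs in \cite{assaf_kohnert,assaf_flagged_schur}. Your sketch therefore has to stand on its own, and it has a genuine gap at its central step.

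Your base case is fine, and so is your comparison of $D(v)$ with $D(w)$ (rows $j,j+1$ swapped, plus a box at $(j,w(j))$). The symmetry claim, however, is backwards. If $v(j)>v(j+1)$, then $\partial_j\mathfrak{S}_v=\mathfrak{S}_{vs_j}\neq 0$, so $\mathfrak{S}_v$ is \emph{not} symmetric in $x_j,x_{j+1}$. Already for $v=21$ one has $D(v)=\{(1,1)\}$ and $\mathcal{K}_{21}=x_1$. Likewise, the base case $x_1^{n-1}\cdots x_{n-1}$ is symmetric in no adjacent pair of variables. So the involution you describe cannot exist on $\Koh(v)$.

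Your string computation fails for the same reason. The operator $\partial_j$ kills every polynomial symmetric in $x_j,x_{j+1}$, so $\partial_j(x_j^ax_{j+1}^b+\cdots+x_j^bx_{j+1}^a)=0$. The shortened string $x_j^{a-1}x_{j+1}^b+\cdots+x_j^bx_{j+1}^{a-1}$ is $\partial_j$ of the \emph{single monomial} $x_j^ax_{j+1}^b$. Full strings are produced by the isobaric operator $f\mapsto\partial_j(x_jf)$; that is the Demazure-character setting, where crystal reflections do apply.

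A divided-difference proof needs a different structure. It is $\mathcal{K}_w$, with $w(j)<w(j+1)$, that must be symmetric. You would want $\Koh(v)$ to split into an $s_j$-symmetric part, which $\partial_j$ kills, plus leftover diagrams of $j$-weight $(a,b)$ with $a>b$. Each leftover diagram must then be matched with a whole $j$-string of $a-b$ diagrams in $\Koh(w)$. Also, the induction should run on $\binom{n}{2}-\ell(w)$, not $n-\ell(w)$.

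Even after that repair, all the difficulty sits in the step you leave open: showing your maps preserve Kohnert-reachability. The intrinsic criterion you propose is false. Take $w=1432$, so $D(w)=\{(2,2),(2,3),(3,2)\}$. The diagram $\{(1,2),(2,3),(3,2)\}$ satisfies your columnwise dominance condition, but it is not in $\Koh(w)$:
\begin{itemize}
\item The only column-$3$ box starts at $(2,3)$ and must end there, so it never moves.
\item Hence $(2,2)$ is never rightmost in its row, and that box never moves either.
\item The box from $(3,2)$ can only jump over $(2,2)$ to $(1,2)$.
\end{itemize}
So column $2$ ends in rows $\{2,3\}$ or $\{1,2\}$, never $\{1,3\}$. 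Admitting this diagram would give two diagrams of weight $x_1x_2x_3$, whereas $[x_1x_2x_3]\mathfrak{S}_{1432}=1$.

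Reachability depends on interactions between columns through the rightmost-box condition. This is the phenomenon behind Example~\ref{ex:1432}, and it is what makes the known proofs nontrivial. Your fallback, a bijection with $\RPD(w,\alpha)$ via ladder and chute moves, just restates the theorem. As written, the proposal contains no argument for the core step.
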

	
	Kohnert's rule has seen renewed interest, with the discovery of new proofs (\cite{assaf_kohnert,assaf_flagged_schur}), the study of poset-theoretic questions (\cite{kohnert_poset_shellability,ranked_bounded_kohnert_posets,ghost_kohnert_posets,koh_posets_ne_diagrams}), and some questions regarding minimal Kohnert move sequences (\cite{comb_puzzle_lascoux,comb_puzzle_kohnert}).
	
	\FloatBarrier
	
	\section{The Ross--Yong and Robichaux Conjectures}
	\label{sec:conjectures}
	
	We describe two proposed \K-theoretic generalizations of Kohnert's rule, one due to Ross--Yong \cite{ghost_kohnert}, and one due to Robichaux \cite{ghost_kohnert_fix}. We discuss counterexamples to both rules. We make a weaker conjecture regarding Robichaux's rule (Conjecture~\ref{conj:support}). 
	
	All computations supporting the enumerative claims in this section were carried	out using Dave Anderson's Julia package \cite{julia}.
	
	\begin{definition}
		A \newword{grave diagram} is a pair $E=(B,G)$ of disjoint subsets of
		$[n]\times[n]$. Cells in $B$ are called \newword{boxes}, and cells in $G$ are called
		\newword{ghosts}. We set $\#E=\#B+\#G$ and 
		\[
			\wt(E)_i=\#\{j:(i,j)\in B\cup G\}.
		\]
		We call arbitrary elements of $B\cup G$ \newword{occupied cells} of $E$.
	\end{definition}
	
	\begin{definition}[{\cite{ghost_kohnert}}]
		\label{def:k_koh_moves}
		Let $E=(B,G)$ be a grave diagram, and suppose $(i,j)\in B$ is rightmost among all
		occupied cells in row $i$. Set
		\[
			i' = \max\{r \mid 1\leq r < i \mbox{ and } (r,j)\notin B\cup G\}.
		\]
		Assume $i'$ exists and every occupied cell $(r,j)$ with $i'<r\leq i$ is a box.
		\begin{itemize}
			\item The \newword{(ordinary) Kohnert move} on $E$ at row $i$ is
			\[(B,G)\longmapsto \left((B\setminus\{(i,j)\})\cup\{(i',j)\},\,G\right).\]
			\item The \newword{\K-Kohnert move} on $E$ at row $i$ is
			\[(B,G)\longmapsto\left((B\setminus\{(i,j)\})\cup\{(i',j)\},\,G\cup\{(i,j)\}\right).\]
		\end{itemize}
		Let $\KKoh(w)$ be the closure of $(D(w),\varnothing)$ under these moves. Set
		\[\KKoh(w,\alpha)=\left\{E\in \KKoh(w)\mid \wt(E)=\alpha\right\}.\]
		We will refer to the elements of $\KKoh(w)$ as \newword{\K-Kohnert diagrams}.
	\end{definition}
	
	 In a Kohnert move on $E$ in row $i$, we view the rightmost box in row $i$ as jumping upward in its column, additionally leaving behind a ghost in a \K-Kohnert move. Note that the definitions of the moves allow boxes to jump over other boxes but not over ghosts. Additionally, ghosts cannot move.
	
	\begin{example}
		Figure~\ref{fig:k_kohnert_example} shows an example of an ordinary Kohnert move and a \K-Kohnert move on a grave diagram. Note that we picture boxes in green and ghosts in gray.
	\end{example}
	
	\begin{figure}[ht]
		\centering
		\includegraphics[scale=1]{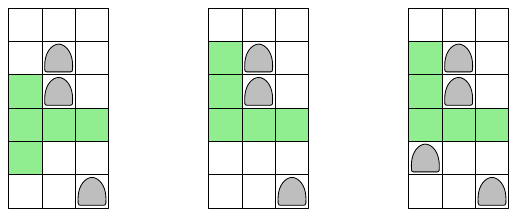}
		\caption{A grave diagram (left), and the results of an ordinary Kohnert move (middle) and a \K-Kohnert move (right).}
		\label{fig:k_kohnert_example}
	\end{figure}
	
	\begin{conjecture}[{\cite[Conjecture 1.6]{ghost_kohnert}}]
		\label{conj:ry_kohnert}
		For any permutation $w\in S_n$,
		\[\mathfrak{G}_w=\sum_{E\in \KKoh(w)} (-1)^{|\wt(E)|-\ell(w)}x^{\wt(E)}.\]
		In other words,
		\[\#\KKoh(w,\alpha) = \#\PD(w,\alpha)\quad\mbox{ for any }\alpha\in \Z_{\geq 0}^n.\]
	\end{conjecture}
	
	Ross and Yong made an analogous conjecture for Lascoux polynomials, an inhomogeneous version of Demazure characters (\cite[Conjecture 1.4]{ghost_kohnert}). Their Lascoux conjecture was proven by Pan and Yu \cite{lascoux_k_kohnert} via a bijection to certain set-valued tableaux defined by Shimozono and Yu \cite{groth_to_lascoux}.
	
	It was shown by Robichaux in \cite[Claim 3.4]{ghost_kohnert_fix} that Conjecture~\ref{conj:ry_kohnert} fails for some permutations in $S_8$. Robichaux gave the minimal counterexample $w=12365847$ and $\alpha=(3,3,3,2)$ as an explicit counterexample, with 
	\[\#\PD(w,\alpha)=3\quad\mbox{and}\quad\#\KKoh(w,\alpha)=2.\]
	Robichaux also provided $w=12375846$ as an example showing the Ross--Yong rule fails even for vexillary permutations. Note that both $12\underline{365}8\underline{4}7$ and $12\underline{375}8\underline{4}6$ contain a $1432$-pattern (underlined).
	
	Robichaux proposed the following enlargement of $\KKoh(w)$ to correct the undercount of the Ross--Yong rule:
	
	\begin{definition}
		\label{def:ghost_k_koh_moves}
		Let $E=(B,G)$ be a grave diagram. Suppose $(i,j)\in G$ is the rightmost occupied cell in row $i$, and set
		\[i' = \max\{r \mid 1\leq r < i \mbox{ and } (r,j)\notin B\cup G\}.\]
		Assume $i'$ exists and every occupied cell $(r,j)$ with $i'<r<i$ is a ghost.

		\begin{itemize}
			\item The \newword{ghost Kohnert move} on $E$ at row $i$ is
			\[(B,G)\longmapsto \left(B,\,(G\setminus\{(i,j)\})\cup\{(i',j)\}\right).\]
			
			\item The \newword{ghost \K-Kohnert move} on $E$ at row $i$ is
			\[(B,G)\longmapsto \left(B,\,G\cup\{(i',j)\}\right).\]
		\end{itemize}
		Let $\GKKoh(w)$ denote the closure of $(D(w),\varnothing)$ under ordinary, \K-, ghost, and ghost \K-Kohnert moves. Set $\GKKoh(w,\alpha)=\left\{E\in \GKKoh(w)\mid \wt(E)=\alpha\right\}$. We identify grave diagrams having $G=\varnothing$ with usual diagrams.
	\end{definition}
	
	We will refer to the elements of $\GKKoh(w)$ as ghost \K-Kohnert diagrams.
	
	\begin{example}
		Figure~\ref{fig:ghost_k_kohnert_example} shows an example of a ghost Kohnert move and a ghost \K-Kohnert move on a grave diagram. Note that we picture boxes in green and ghosts in gray.
	\end{example}
	
	\begin{figure}[ht]
		\centering
		\includegraphics[scale=1]{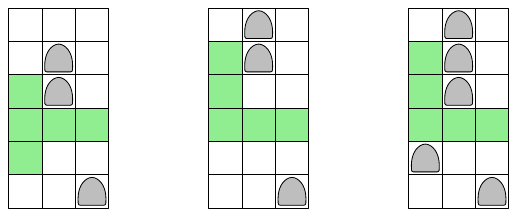}
		\caption{A grave diagram (left), and the results of a ghost Kohnert move (middle) and a ghost \K-Kohnert move (right).}
		\label{fig:ghost_k_kohnert_example}
	\end{figure}
	
	\begin{conjecture}[{\cite[Conjecture 4.3]{ghost_kohnert_fix}}]
		\label{conj:cr_kohnert}
		For any permutation $w\in S_n$ and $\alpha\in \Z_{\geq 0}^n$,
		\[\#\GKKoh(w,\alpha) = \#\PD(w,\alpha).\]
	\end{conjecture}
	
	We present counterexamples to Conjecture~\ref{conj:cr_kohnert} in both numerical directions.
	
	\begin{proposition}
		\label{prop:ghost_koh_overcounts}
		The permutation $v=142396857$ and exponent $\gamma=(4,6,3,3,0,0,0,0,0)$ witness
		\[
		\#\PD(v,\gamma)=1
		\quad\text{and}\quad
		\#\GKKoh(v,\gamma)\geq 2.
		\]
	\end{proposition}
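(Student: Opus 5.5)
The plan is to verify the two assertions separately. The pipe dream count comes from Theorem~\ref{thm:pdformula} together with a forcing argument. The lower bound on $\#\GKKoh(v,\gamma)$ comes from exhibiting two distinct witness grave diagrams with explicit move sequences.

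First I record the basic data. The inversions of $v=142396857$ are $(4,2),(4,3),(9,6),(9,8),(9,5),(9,7),(6,5),(8,5),(8,7)$, so $\ell(v)=9$. Since $|\gamma|=16$, every object of weight $\gamma$ carries $7$ ``excess'' cells. I will draw the Rothe diagram $D(v)$, which has $2$ cells in row $2$, $4$ cells in row $5$, $1$ cell in row $6$ and $2$ cells in row $7$. Note that $\gamma$ is supported in rows $1$ through $4$, so all cells of $D(v)$ below row $4$ must be moved up. The diagram $D(v)$ is also what the move sequences in Appendix~\ref{sec:appendix} start from.

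For $\#\PD(v,\gamma)=1$, I would argue directly on the triangular grid. Row $i$ has $9-i$ tiles available, so row $1$ has $8$ and row $2$ has $7$. Thus $\gamma$ asks for $4$ of $8$ crosses in row $1$, $6$ of $7$ in row $2$, $3$ of $6$ in row $3$, $3$ of $5$ in row $4$, and none below. The pipes must realize $v$, and the first crossings must record exactly the inversions of $v$.
\begin{itemize}
\item Since rows $5$ and below contain no crosses, pipes $5,7,8$ must reach their left-edge positions $9,8,7$ using only bumps below row $4$. This pins down where they enter row $4$ and forces specific bump tiles in rows $3$ and $4$.
\item Row $2$ then has only one bump tile, and its position is determined by the fact that pipe $2$ must exit at row $3$ while pipe $4$ exits at row $2$.
\item Propagating these constraints upward fixes the bump tiles of rows $1$ and $3$.
\end{itemize}
This yields a unique pipe dream. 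I would cross-check the result against the coefficient $[x^\gamma]\mathfrak{G}_v$ computed with Anderson's package \cite{julia}.

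For $\#\GKKoh(v,\gamma)\geq 2$, I would produce two distinct grave diagrams $E_1\neq E_2$, each of weight $\gamma$. For each, I give a sequence of ordinary, \K-, ghost and ghost \K-Kohnert moves starting from $(D(v),\varnothing)$, and at each step verify the hypotheses of Definitions~\ref{def:k_koh_moves} and~\ref{def:ghost_k_koh_moves}:
\begin{itemize}
\item the moving cell is rightmost among occupied cells in its row;
\item the target row $i'$ exists;
\item the intervening occupied cells are boxes (for box moves) or ghosts (for ghost moves).
\end{itemize}
The natural way to find $E_1,E_2$ is to take the diagram in $\KKoh(v)$ (or the one mirroring the unique pipe dream) and then use ghost \K-Kohnert moves to create a second configuration of the same weight. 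For example, one can duplicate a ghost upward in a column instead of creating it by a \K-move elsewhere. Distinctness is then immediate because the box/ghost sets differ in some cell.

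The main obstacle is the uniqueness half of the pipe dream count. Move sequences are mechanical to check, but showing that no second pipe dream of weight $\gamma$ exists requires a careful exhaustive forcing argument, which non-reduced pipe dreams make error-prone because redundant crossings are allowed. My fallback is to rely on the computer enumeration, and to present the forcing argument row by row from row $4$ upward, starting from the constraint that rows $5$ through $8$ are all bumps.
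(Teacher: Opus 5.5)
\textbf{The gap: no witnesses are produced.} Your outline has the same shape as the paper's proof. The paper also takes $\#\PD(v,\gamma)=1$ from computer enumeration. It proves $\#\GKKoh(v,\gamma)\geq 2$ by displaying two explicit grave diagrams, with legal move sequences from $(D(v),\varnothing)$ given in the appendix.

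Your proposal never supplies this second half. The inequality is a pure existence statement, so its whole content is the pair $E_1\neq E_2$ together with verified move sequences. Saying ``I would produce two distinct grave diagrams'' does not prove it.

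The recipe you sketch for finding them also fails as stated:
\begin{itemize}
\item A ghost \K-Kohnert move adds an occupied cell. It therefore cannot turn one weight-$\gamma$ diagram into another weight-$\gamma$ diagram; the second witness must come from a genuinely different move history, which you do not describe.
\item There is no established correspondence between pipe dreams and \K-Kohnert diagrams to ``mirror''. Moreover $v$ contains $1432$ (e.g.\ the values $1,9,8,7$), so the bijection of Section~4 is unavailable.
\end{itemize}

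\textbf{How to close it.} The weight $\gamma$ is very rigid, which narrows the search considerably.
\begin{itemize}
\item The box at $(2,2)$ can never move. Moving it would require $(2,3)$ to be vacated permanently, which leaves row $2$ with fewer than $6$ occupied cells.
\item The column-$6$ box moves one row at a time and must end in row $\leq 2$. Each time it leaves row $r\in\{5,4,3\}$ it is rightmost, so $(r,8)$ is empty then. Since nothing ever lies below row $5$ in column $8$, rows $\geq 3$ of column $8$ are then empty forever. By the same reasoning, that box cannot leave row $2$ without emptying $(2,8)$ for good.
\item Consequently row $1$ is occupied exactly in columns $3,5,7,8$, so column $3$ must make a \K-move. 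Rows $3$ and $4$ are occupied exactly in columns $5,6,7$.
\end{itemize}
So every element of $\GKKoh(v,\gamma)$ has the same support and the same columns $2,3,6,8$. Two elements can differ only in where the ghost sits in rows $2$--$4$ of column $5$ and where the lower box sits in rows $2$--$4$ of column $7$.

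One witness is reachable with ordinary and \K-Kohnert moves alone. It has boxes at $(1,3),(1,5),(1,7),(1,8),(2,2),(2,6),(2,7),(3,5),(4,5)$ and ghosts at $(2,3),(2,5),(2,8),(3,6),(3,7),(4,6),(4,7)$. You still need a second admissible arrangement of columns $5$ and $7$, with an explicit move sequence in which each moving cell is checked to be rightmost in its row and each jump passes only boxes (respectively only ghosts).

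\textbf{Minor point in the pipe-dream sketch.} Pipes $5,7,8$ exit at rows $8,9,7$, not $9,8,7$. More precisely, the bottom edge of row $4$ must carry pipes $9,6,8,5,7$ in columns $1$--$5$.
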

	
	\begin{proof}
		The Rothe diagram of $v$ and a pipe dream of $v$ with weight $\gamma$ is shown in Figure~\ref{fig:142396857_4633_pd}. Direct computational enumeration gives $\#\PD(v,\gamma)=1$.
		
		\begin{figure}[ht]
			\begin{center}
				\includegraphics[scale=.75]{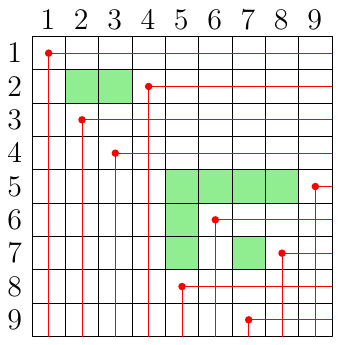}
				\qquad\qquad\qquad
				\includegraphics[scale=.75]{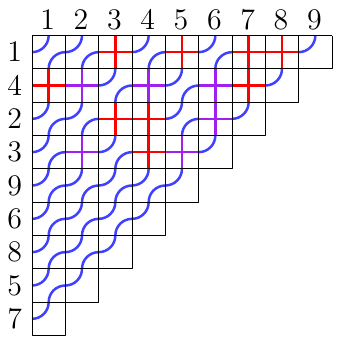}
				\caption{The Rothe diagram of $v=142396857$ (left) and unique pipe dream of $v$ with weight $\gamma=(4,6,3,3)$ (right).}
				\label{fig:142396857_4633_pd}
			\end{center}
		\end{figure}
		
		On the other hand, the two diagrams in Figure~\ref{fig:142396857_4633_ghost_koh_overcounts} both lie in $\GKKoh(v,\gamma)$. We demonstrate how to achieve these diagrams in Figures~\ref{fig:142396857_4633_ghost_koh_1_steps} and~\ref{fig:142396857_4633_ghost_koh_2_steps} (dropping empty columns and bottommost empty rows for compactness).
	\end{proof}
	
	\begin{figure}[ht]
		\begin{center}
			\includegraphics[scale=.75]{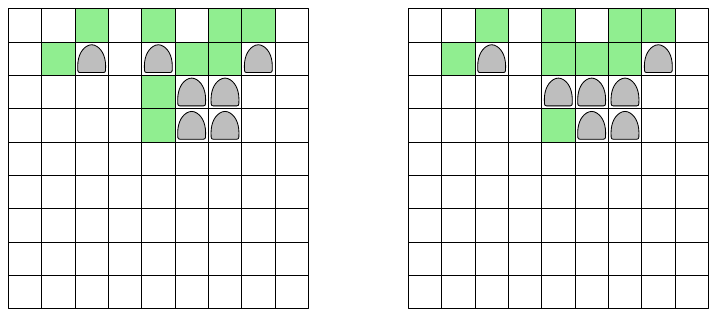}
			\caption{Witnesses to $\#\GKKoh(v,\gamma)\geq 2$.}
			\label{fig:142396857_4633_ghost_koh_overcounts}
		\end{center}
	\end{figure}
	
	\begin{proposition}
		\label{prop:undercount_proof}
		The permutation $u=123765948$ and exponent $\beta=(4,4,4,4,1,0,0,0,0)$ witness
		\[
		\#\GKKoh(u,\beta)=2
		\quad\text{and}\quad
		\#\PD(u,\beta)=3.
		\]
	\end{proposition}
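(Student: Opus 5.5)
This is a finite computational statement, so the proof will combine explicit witnesses with exhaustive enumeration, as in Proposition~\ref{prop:ghost_koh_overcounts}. First compute the Rothe diagram of $u=123765948$. Rows $1,2,3$ are empty. Rows $4,5,6$ contain cells in columns $4,5,6$ (row $4$: columns $4,5,6$; row $5$: columns $4,5$; row $6$: column $4$). Row $7$ has column $4$ and row $8$ has column $4$, since $u(7)=9$, $u(8)=4$ and $u(9)=8$. A check against the inversion count gives $\ell(u)=3+2+1+0\cdot\ldots$ plus the inversions involving $9$ and $4$, which is $11$, consistent with $|\beta|=17$ meaning six extra ghosts. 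Thus $D(u)$ is a staircase in columns $4$--$6$ with a tail in column $4$ plus the cell $(7,8)$.

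For the pipe dream side, I would compute $\#\PD(u,\beta)=3$ directly. One option is the recursion $\mathfrak G_w=\overline{\partial}_j\mathfrak G_{ws_j}$ (via the package \cite{julia}), reading off $[x^\beta]\mathfrak G_u=(-1)^{17-11}\cdot 3$. The other is to list the three pipe dreams of weight $(4,4,4,4,1)$ by hand. The weight forces rows $1$--$4$ to carry many crosses within the short triangular rows, which severely restricts the configurations. For exposition, the three pipe dreams would be displayed in a figure.

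For the Kohnert side, I would exhibit two distinct grave diagrams of weight $\beta$ in $\GKKoh(u)$, each with an explicit move sequence from $(D(u),\varnothing)$ of the kind given in Appendix~\ref{sec:appendix}. This establishes the lower bound $\#\GKKoh(u,\beta)\geq 2$. The matching upper bound is the substantive part, since $\GKKoh(u)$ is defined as a closure.

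The main obstacle is showing that no third diagram exists. I would do this by breadth-first enumeration of the closure under all four move types, starting from $D(u)$. Each move either preserves the number of occupied cells or increases it by one, and every move raises a cell, so it suffices to explore diagrams with at most $17$ occupied cells. This keeps the search finite. I would rely on the same computational enumeration used throughout this section.

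A conceptual backup argument is to observe that row $5$ must contain exactly one occupied cell and rows $6$--$9$ none. Every cell originating in rows $6,7,8$ must then be vacated, which forces specific K-moves in columns $4$ and $8$. I would then case-split on the column of the surviving row-$5$ cell. I expect that, in each case, the column-occupancy constraints of rows $1$--$4$ leave at most the two exhibited diagrams. The deficit relative to $\#\PD(u,\beta)=3$ arises because ghosts block boxes from jumping in column $4$.
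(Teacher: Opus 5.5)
Your plan is a valid computer-assisted proof, and for $\#\PD(u,\beta)=3$ and the lower bound it matches the paper: a direct computation of $\mathfrak{G}_u$, plus two explicit witnesses with move sequences. The routes diverge on the upper bound $\#\GKKoh(u,\beta)\leq 2$. You would exhaust the finite closure, correctly pruned by the fact that no move decreases the number of occupied cells. The paper instead argues by hand:
\begin{itemize}
\item It first forces the support: rows $1$--$4$ are full in columns $4,5,6,8$, and the extra row-$5$ cell must be $(5,4)$.
\item Columns $6$ and $8$ are then forced because each has a single box. Column $5$ is forced because $(4,6)$ stays occupied forever, which freezes $(4,4)$ and $(4,5)$.
\item This leaves the unique column-$4$ ghost in row $2$, $3$ or $5$.
\item Row $3$ is excluded by a move-ordering argument. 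The ghost at $(3,4)$ must be created before the box from $(5,5)$ first jumps to $(3,5)$. That forces the column-$4$ boxes from rows $6,7$ above row $3$, and the box from $(7,8)$ up to row $4$, beforehand. After that, columns $6$ and $8$ cannot both be filled.
\end{itemize}
Enumeration gives certainty without explanation; the paper's argument shows which configuration is unreachable and why. Your ``conceptual backup'' only reaches the level of the paper's Claim~1. Static occupancy constraints cannot locate the column-$4$ ghost. Your ``I expect'' step is exactly where the dynamic arguments are needed (the frozen row $4$ and the ordering obstruction at $(3,4)$), so as written the backup is not a proof.

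Also correct your setup. Since $u(8)=4$, row $8$ of $D(u)$ is empty, so $D(u)=\{(4,4),(4,5),(4,6),(5,4),(5,5),(6,4),(7,4),(7,8)\}$. Hence $\ell(u)=8$ rather than $11$, a diagram of weight $\beta$ carries nine ghosts rather than six, and $[x^\beta]\mathfrak{G}_u=(-1)^{9}\cdot 3=-3$. These slips are harmless if your search computes $D(u)$ from $u$, but they would derail any hand version of the argument; for example, no cell originates in row $8$.
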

	
	\begin{figure}[ht]
		\begin{center}
			\includegraphics[scale=.75]{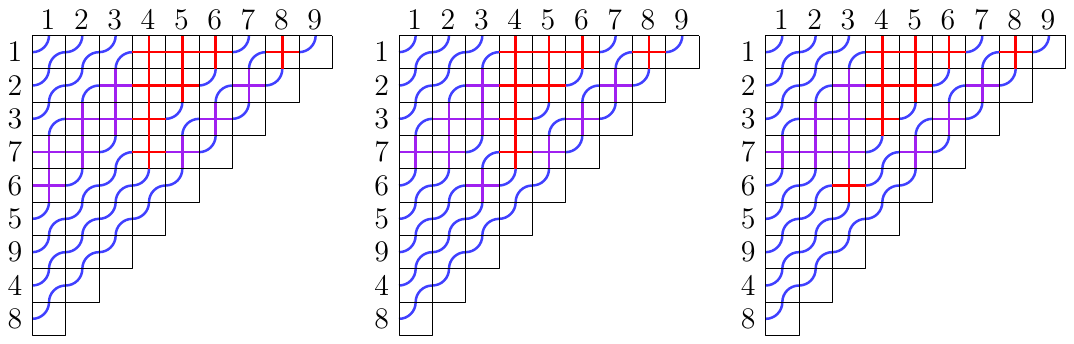}
			\caption{The three pipe dreams of $u=123765948$ with weight $\beta=(4,4,4,4,1)$.}
			\label{fig:123765948_44441_pd}
		\end{center}
	\end{figure}
	
	\begin{figure}[ht]
		\begin{center}
			\includegraphics[scale=.75]{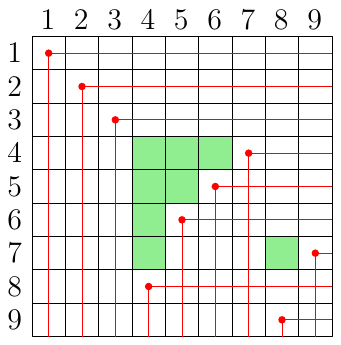}
			\qquad\qquad
			\includegraphics[scale=.75]{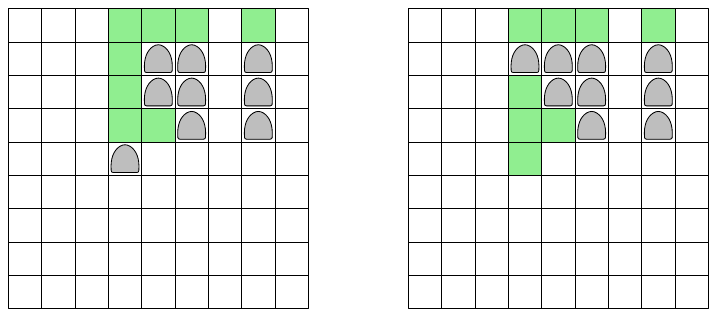}
			\caption{The Rothe diagram of $u=123765948$ (left) and witnesses to $\#\GKKoh(u,\beta)\geq 2$ (right) for $\beta=(4,4,4,4,1)$.}
			\label{fig:123765948_44441_ghost_koh_undercounts}
		\end{center}
	\end{figure}
	
	\begin{proof}
		A direct computation of $\mathfrak{G}_u$ gives
		\[\#\PD(u,\beta) = \left|[x^\beta]\mathfrak{G}_u\right|=3. \]
		The three pipe dreams of $u$ with weight $\beta$ are shown in Figure~\ref{fig:123765948_44441_pd}. The Rothe diagram of $u$ and two ghost \K-Kohnert diagrams of $u$ with weight $\beta$ are shown in Figure~\ref{fig:123765948_44441_ghost_koh_undercounts}. Sequences of moves generating the diagrams in Figure~\ref{fig:123765948_44441_ghost_koh_undercounts} are given in Figures~\ref{fig:123765948_44441_ghost_koh_1_steps} and~\ref{fig:123765948_44441_ghost_koh_2_steps}. For completeness, we include a direct proof that $\#\GKKoh(u,\beta)=2$. Let $E\in\GKKoh(u,\beta)$. 
		
		\medskip
		
		\noindent \textbf{Claim 1:} The support of $E$ is determined. 
		
		\medskip
		
		\noindent Since the four types of Kohnert move all preserve column index, all occupied cells of $E$ lie in columns $4,5,6,8$. By the weight condition, rows $1,2,3,4$ contain one occupied cell in each of these columns. There is one additional occupied cell $C$ somewhere in row $5$ of $E$. We show $C$ can only be in column $4$.
		
		Observe that $C$ cannot be in column $6$, since column $6$ has no initial box at or below row $5$. Suppose $C$ is in column $5$, so at position $(5,5)$. The initial box at position $(5,5)$ in $D(u)$ must be removed in order for the box at position $(5,4)$ to move out of row $5$. However $(5,5)\in D(u)$ is lowest in its column, so it must move via \K-Kohnert move in order for $C$ to ever be occupied again. But then the cell $(5,5)$ would have to remain occupied by a ghost, and the initial box at position $(5,4)$ can never move.
		
		Finally, $C$ cannot be in column $8$. If it were, then column 4 of $E$ must consist entirely of (non-ghost) boxes. Up to and including the moment when it reaches row $4$, each time the initial box at $(7,8)$ moves there will be a box in column 4 in a strictly lower row. When the initial box from $(7,8)$ passes into row 4, it must leave a ghost in row $5$. This will prevent at least one box in column $4$ from reaching rows $\{1,2,3,4\}$. Hence the target weight cannot be achieved if $C$ is in column $8$. 
		
		Consequently, the only viable possibility is $C$ in column $4$, implying that the support of $E$ is exactly $\left(\{1,2,3,4\}\times\{4,5,6,8\}\right)\cup\{(5,4)\}$. 
		
		\medskip
		
		\noindent \textbf{Claim 2:} $E$ equals one of the diagrams in Figure~\ref{fig:123765948_44441_ghost_koh_undercounts}.
		
		\medskip		
		
		The number of ordinary boxes in each column is invariant, so columns $4,5,6,8$ contain respectively $4,2,1,1$ ordinary boxes in $E$. In a column with a single ordinary box, all ghosts created from it lie in rows below it; hence columns $6$ and $8$ of $E$ coincide with those of the diagrams in Figure~\ref{fig:123765948_44441_ghost_koh_undercounts}. 
		
		Next consider column $5$. Note that the cell $(4,6)$ must remain occupied throughout the entire move sequence. It follows that the initial boxes $(4,4)$ and $(4,5)$ can never move. In particular, cell $(4,5)$ must remain occupied by an ordinary box. Since column $5$ must be completely occupied in rows $1,2,3,4$, its only possible configuration is the one shared by the two diagrams in Figure~\ref{fig:123765948_44441_ghost_koh_undercounts}. 
		
		It remains to show column 4 of $E$ must have either of the configurations in Figure~\ref{fig:123765948_44441_ghost_koh_undercounts}. The reader is advised to consult Figure~\ref{fig:123765948_undercount} throughout this argument.
		
		\begin{figure}[ht]
			\begin{center}
				\includegraphics[scale=1.3]{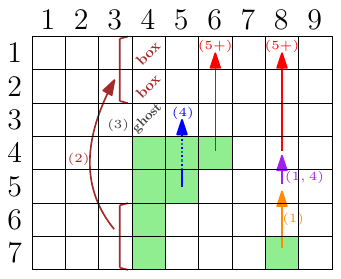}
				\caption{The forced construction steps in the proof of Claim~2 of Proposition~\ref{prop:undercount_proof}. The steps $(1)$, $(2)$, $(3)$, $(4)$ occur in labeled order, with the step $(1,4)$ occurring any time after step $(1)$ and before step $(4)$. The steps $(5+)$ must occur afterwards, but are at odds with the required end result.}
				\label{fig:123765948_undercount}
			\end{center}
		\end{figure}
		
		Since column $4$ must consist of $4$ boxes and $1$ ghost, it is enough to show the ghost lies in row $2$ or row $5$. It is immediate from Definitions~\ref{def:k_koh_moves} and~\ref{def:ghost_k_koh_moves} that the top occupied cell in any column of a ghost Kohnert diagram is an ordinary box. To see that the ghost cannot lie in row $4$, recall we noted in the previous paragraph that cell $(4,4)$ remains occupied by an ordinary box throughout the entire move sequence.
		
		It remains to rule out the possibility that the unique ghost in column $4$ lies in row $3$. Suppose, for contradiction, that this is the case. Since $(4,4)$ is a permanent ordinary box, the ghost at $(3,4)$ cannot be produced by a ghost move from a lower row. So the ghost at $(3,4)$ must be created by a \K-Kohnert move shifting a box occupying cell $(3,4)$.
		
		At some point then, the cell $(3,4)$ must be occupied and rightmost in its row.
		
		Consider the initial box $(5,5)\in D(u)$. Since $(4,5)$ is fixed, the first move of that box shifts it to $(3,5)$. After that first move, we claim the cell $(3,5)$ remains occupied forever. To see this, note that if $(3,5)$ were ever empty afterwards, then it would remain so. This would contradict the established support of $E$. It follows that the ghost at $(3,4)$ must be created before the first move of the initial box $(5,5)$, for otherwise $(3,5)$ would already be occupied, and $(3,4)$ could never be rightmost in row $3$.
		
		Once the ghost at $(3,4)$ has been created, no ordinary box in column $4$ lying below row $3$ can ever move above row $3$. Hence,		before the first move of $(5,5)$, the two column $4$ boxes that eventually occupy rows $1$ and $2$ must already have crossed above row $3$. Since
		$(4,4)$ is fixed, and since $(5,4)$ is still blocked by the occupied cell
		$(5,5)$ at that time, these two boxes must be the initial boxes in rows $6$
		and $7$ of column $4$.
		
		For these two column $4$ boxes to cross above row $3$, the initial box at $(7,8)$ must first move to $(5,8)$. Note also that the first move of the initial box $(5,5)$ requires $(5,8)$ to be empty, so the initial box at $(7,8)$ must move to $(4,8)$ prior to this.
		
		After these moves have been completed, it is now impossible for both of columns $6$ and $8$ to reach their final configuration with rows 1,2,3,4 all filled. See Figure~\ref{fig:123765948_undercount} for a visual representation of these events. This precludes the ghost in column $4$ from appearing in row $3$, leaving exactly the two configurations shown in Figure~\ref{fig:123765948_44441_ghost_koh_undercounts}.
	\end{proof}
	
	\begin{remark}
		We note (without proof) that the vexillary case of Robichaux's rule appears no simpler than the general case. One can check that the vexillary permutations $123486957$ and $123486975$ witness overcounts of Robichaux's \K-Kohnert rule. Similarly, the vexillary permutations $123875946$ and $123875964$ witness undercounts.
	\end{remark}
	
	\subsection{Support of Grothendieck Polynomials}\phantom{}\newline\vspace{-2ex}
	
	We note an even more extreme failing of the Ross--Yong rule.
	
	\begin{proposition}
		\label{prop:RY-support-failure}
		For $w=123764958$ and $\alpha=(4,4,4,3,0,0,0,0,0)$, one has
		\[
		\#\PD(w,\alpha)=1,
		\quad\mbox{but}\quad
		\#\KKoh(w,\alpha)=0.
		\]
	\end{proposition}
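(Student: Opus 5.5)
The statement has two halves, and I will handle them separately. For $\#\PD(w,\alpha)=1$ I will compute, in the same way as in Propositions~\ref{prop:ghost_koh_overcounts} and~\ref{prop:undercount_proof}. I will exhibit one pipe dream of $w=123764958$ with weight $\alpha=(4,4,4,3)$. I will then confirm uniqueness by direct enumeration, or by computing $[x^\alpha]\mathfrak{G}_w$ via Theorem~\ref{thm:pdformula}.

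The Rothe diagram is
\[D(w)=\{(4,4),(4,5),(4,6),(5,4),(5,5),(6,4),(7,4),(7,8)\},\]
so $\ell(w)=8$ while $|\alpha|=15$. The substantive half is $\#\KKoh(w,\alpha)=0$, which I will prove by contradiction in the style of Claim~1 of Proposition~\ref{prop:undercount_proof}. Suppose $E=(B,G)\in\KKoh(w,\alpha)$. All moves preserve columns, and rows $5$ and beyond are empty in $E$. Row $4$ has $3$ occupied cells, so rows $1$--$4$ contain $15$ cells drawn from columns $4,5,6,8$. A column occupies at most $4$ cells there, so exactly one column $c$ misses exactly one cell in rows $1$--$4$.

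The key invariant is that in $\KKoh$ the ghosts never move. Hence every cell vacated by a \K-move stays occupied, and every ordinary box originally in row $\ge5$ must end in rows $1$--$4$ with no ghost left below row $4$. So the boxes at $(5,4),(6,4),(7,4)$, $(5,5)$ and $(7,8)$ must all rise using only ordinary moves until they are at or above row $4$. In particular, a box moving out of rows $5$--$7$ leaves nothing behind.

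First I will analyze column $6$. It has a single box, starting at $(4,6)$. To fill rows $1$--$4$ of column $6$ it must make \K-moves upward, and it can only move once it is rightmost in row $4$. For column $8$, the box at $(7,8)$ must pass through rows $5,6$ by ordinary moves and then make \K-moves to fill rows $1$--$4$, unless $c=8$. Similarly, column $5$ needs $(4,5)$ and $(5,5)$ to produce $4$ cells.

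The crux is a timing argument. The box at $(4,4)$ and the boxes below it in column $4$ can move only when rows are clear to their right. But a box cannot jump over a ghost, so once ghosts occupy $(4,5)$ or $(4,6)$, column-$4$ boxes below row $4$ cannot pass them. I will case on $c\in\{4,5,6,8\}$ and on the order of the first moves out of row $4$ in columns $4,5,6$. In each case I aim to show that some box below row $4$ gets trapped, or that the required number of cells in rows $1$--$4$ cannot be produced. This mirrors the forced-step analysis of Figure~\ref{fig:123765948_undercount}.

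I expect the main obstacle to be this case analysis. The real danger is that my bookkeeping of exactly which ghosts block which boxes is wrong. If a clean hand argument proves too fiddly, I will fall back on exhaustive computer enumeration of $\KKoh(w)$, which is finite, as the paper does elsewhere.
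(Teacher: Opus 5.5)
\textbf{The fallback is the paper's proof.} Your fallback, exhaustive enumeration of $\PD(w,\alpha)$ and of the finite set $\KKoh(w)$, is exactly what the paper does for both equalities, and it settles the statement.

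\textbf{The hand argument starts from the wrong Rothe diagram.} The hand argument you put forward as the main route for $\#\KKoh(w,\alpha)=0$ rests on an incorrect $D(w)$. For $w=123764958$ one has $w(6)=4$, so row $6$ of $D(w)$ is empty. Also $(7,4)\notin D(w)$, because $w^{-1}(4)=6<7$. On the other hand $(7,5)\in D(w)$, because $w(7)=9>5$ and $w^{-1}(5)=8>7$. The correct diagram is
\[D(w)=\{(4,4),(4,5),(4,6),(5,4),(5,5),(7,5),(7,8)\},\]
so $\ell(w)=7$ and $E$ must carry $8$ ghosts, not $7$. The set you wrote is $D(u)$ for $u=123765948$ from Proposition~\ref{prop:undercount_proof}. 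All of your column-specific bookkeeping therefore describes a different permutation.

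\textbf{What changes with the correct diagram.} Column $4$ has only the two boxes $(4,4),(5,4)$. It therefore needs at least one \K-Kohnert move out of row $2$ or $3$, made while $(i,5),(i,6),(i,8)$ are all empty. Column $5$ has three boxes, including $(7,5)$, which must leave rows $5$--$7$ by ordinary moves only. Your general invariants survive: ghosts never move, every move out of a row $\ge 5$ is ordinary, and exactly one of columns $4,5,6,8$ is empty in row $4$ at the end. The case analysis itself, however, has to be rebuilt on the correct diagram.

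\textbf{Two further problems.} First, your blocking claim is false as stated. A ghost only obstructs jumps within its own column. A ghost at $(4,5)$ or $(4,6)$ therefore does not stop a column-$4$ box in row $5$ from jumping over a box at $(4,4)$. What it does is freeze whatever occupies $(4,4)$ forever, since that cell can never again be rightmost in row $4$. Second, the case analysis is only announced (``I aim to show''), never carried out. As written, the $\KKoh$ half rests entirely on the computer fallback. That is acceptable, since it is the paper's proof, but say so plainly and run the enumeration on the correct $w$. The hand argument as proposed is not a proof.
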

	
	\begin{proof}
		The equalities $\#\PD(w,\alpha)=1$ and $\#\KKoh(w,\alpha)=0$ can be verified by exhaustive enumeration. The witnessing pipe dream and Rothe diagram of $w$ are given in Figure~\ref{fig:123764958_4443_ry_failure}.
	\end{proof}
	
	\begin{figure}[ht]
		\begin{center}
			\includegraphics[scale=.75]{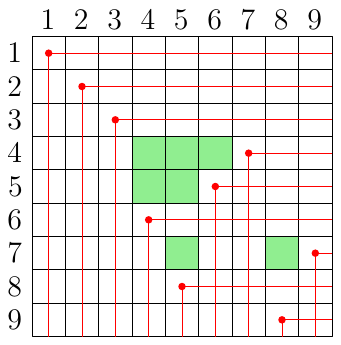}
			\qquad\qquad
			\includegraphics[scale=.75]{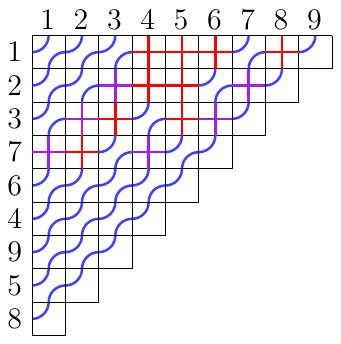}
			\caption{The Rothe diagram of $w=123764958$ (left), and a witness to $\#\PD(w,\alpha)=1$ (right) for $\alpha=(4,4,4,3)$.}
			\label{fig:123764958_4443_ry_failure}
		\end{center}
	\end{figure}
	
	Robichaux's ghost \K-Kohnert model does produce a diagram of weight $\alpha$,	shown in Figure~\ref{fig:ry_support_correction}. A sequence of explicit moves yielding this diagram is given in Figure~\ref{fig:123764958_4443_support_ghost_koh}.
	
	\begin{figure}[ht]
		\centering
		\includegraphics[scale=.75]{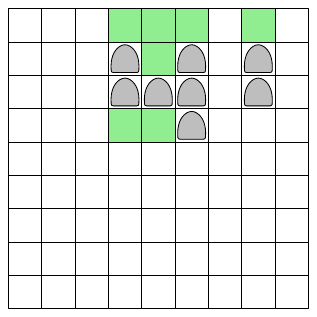}
		\caption{A diagram in $\GKKoh(w)\setminus\KKoh(w)$ for $w=123764958$.}
		\label{fig:ry_support_correction}
	\end{figure}
	
	Proposition~\ref{prop:RY-support-failure} shows that the original Ross--Yong rule can miss support monomials. Exploratory computations found no instance of a similar failing for Robichaux's rule. This motivates the following weakening of Conjecture~\ref{conj:cr_kohnert}.
	
	\begin{conjecture}
		\label{conj:support}
		For any permutation $w\in S_n$,
		\[\left\{\wt(P) \mid P\in\PD(w)\right\} = \left\{\wt(E)\mid E\in \GKKoh(w)\right\}.\]
	\end{conjecture}
	
	In words, Robichaux's ghost \K-Kohnert rule correctly computes the support of any Grothendieck polynomial. We have checked Conjecture~\ref{conj:support} for all $w\in S_9$.
	
	
	\section{1432-Avoiding Permutations}
	\label{sec:1432}
	
	In \cite[Theorem 5.1]{ghost_kohnert_fix}, Robichaux proved that both the ghost rule (Conjecture~\ref{conj:cr_kohnert}) and the Ross--Yong rule (Conjecture~\ref{conj:ry_kohnert}) hold for 321-avoiding permutations. This was done through reduction to a special case of the Pan--Yu bijection \cite{lascoux_k_kohnert}. Robichaux also suggested both conjectures hold for 1432-avoiding permutations, and may admit tableaux-theoretic proofs for that class via a formula of Fan--Guo \cite{fg_svrt}.
	
	Following Robichaux's suggestion, we construct a (nonrecursive) weight-preserving bijection between Fan--Guo's set-valued Rothe tableaux and Ross--Yong's \K-Kohnert diagrams. Note that our construction is independent of the Pan--Yu bijection.
	
	\subsection{Fan--Guo's Tableaux Formula}\phantom{}\newline\vspace{-2ex}
	
	\begin{definition}
		\label{def:svrt}
		A \newword{flagged set-valued Rothe tableau} of shape $D(w)$ is a map
		\[
		T:D(w)\to 2^{[n]}\setminus\{\varnothing\}
		\]
		such that (whenever the relevant cells lie in $D(w)$)
		\begin{align}
			\min \, T(r,c)&\geq \max \, T(r,c') \quad \text{if } c<c', \label{eq:row-cond}\\
			\max \, T(r,c)&<\min \, T(r',c) \quad\text{if } r<r', \label{eq:col-cond}\\
			\max \, T(r,c)&\leq r. \label{eq:flag-cond}
		\end{align}
		We denote by $\SVRT(w)$ the collection of set-valued Rothe tableaux of shape $D(w)$. The weight of $T\in \SVRT(w)$ is the vector with components
		\[\wt(T)_i=\#\{(r,c)\in D(w)\mid i\in T(r,c)\}.\]
	\end{definition}
	
	\begin{theorem}[{\cite[Corollary 1.2]{fg_svrt}}]
		\label{thm:fg_formula}
		If $w\in S_n$ is $1432$-avoiding, then
		\[
			\mathfrak{G}_w(x)=\sum_{T\in\SVRT(w)}(-1)^{|\wt(T)|-\ell(w)}x^{\wt(T)}.
		\]
	\end{theorem}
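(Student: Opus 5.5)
The plan is to prove the formula by downward induction on $\ell(w)$, using the recursive definition $\mathfrak{G}_w=\overline{\partial}_j\mathfrak{G}_{ws_j}$. Define the generating function
\[F_w(x)=\sum_{T\in\SVRT(w)}(-1)^{|\wt(T)|-\ell(w)}x^{\wt(T)},\]
and show that the family $\{F_w\}$ satisfies the same recursion as $\{\mathfrak{G}_w\}$. There is an immediate problem: the class of $1432$-avoiding permutations is not closed under $w\mapsto ws_j$ for arbitrary ascents $j$, and $w_0$ contains $1432$ once $n\geq 4$. So I would not start the induction at $w_0$. Instead I would start it at the dominant permutations, and at each step choose $j$ so that both $w$ and $ws_j$ stay $1432$-avoiding.

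For a dominant $w$ ($132$-avoiding), $D(w)$ is a partition shape justified at $(1,1)$. The flag condition \eqref{eq:flag-cond} together with the strict column condition \eqref{eq:col-cond} then forces $T(r,c)=\{r\}$. Hence $F_w=x^{\wt(D(w))}=\mathfrak{G}_w$, which is the standard base case.

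For the inductive step, I first need a combinatorial lemma. It says that every $1432$-avoiding non-dominant $w$ has an index $j$ with $w(j)<w(j+1)$, $ws_j$ still $1432$-avoiding, and $D(ws_j)$ obtained from $D(w)$ by a controlled move: rows $j$ and $j+1$ are swapped, and one extra cell is added in row $j$. I would pick $j$ to be the first ascent position lying to the right of the relevant pattern occurrences, for instance the last ascent among the first descent block. I would then check directly that the pattern $1432$ cannot be created, by case analysis on where $j,j+1$ sit in a putative occurrence. The geometric meaning of $1432$-avoidance that I expect to use is the following. For each column $c$, the cells of $D(w)$ in that column, read top to bottom, have the property that their possible entries are forced to be "almost" an initial segment. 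Consequently, only the entries $j$ and $j+1$ are free to interact in rows $j$ and $j+1$.

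The main step, and the expected obstacle, is proving $F_w=\overline{\partial}_jF_{ws_j}$ at the level of tableaux. Write $F_{ws_j}$ as a sum over tableaux, and group the tableaux by their restriction to entries other than $j$ and $j+1$. Within each group, the entries $j$ and $j+1$ occupy a collection of "free" cells, "paired" cells, and cells that may hold both values. My first attempt would be a set-valued Bender--Knuth involution in the style of Buch's $K$-theoretic crystal operators. This involution should realize the identity $\overline{\partial}_j(x_j^ax_{j+1}^b(x_j\text{ or }x_jx_{j+1}\text{ factors}))$ as the generating function over the corresponding group of $\SVRT(w)$. The sign $(-1)^{|\wt|-\ell}$ must absorb the factor $(1-x_{j+1})$. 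The delicate point is that the flag bound \eqref{eq:flag-cond} $\max T(r,c)\leq r$ distinguishes rows $j$ and $j+1$. Because of this, the swap of rows between $D(ws_j)$ and $D(w)$ has to match exactly the cell whose allowed entries go from $\{\dots,j\}$ to $\{\dots,j+1\}$.

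If the involution approach becomes unwieldy, my fallback is a direct weight-preserving bijection $\PD(w)\to\SVRT(w)$, using Theorem~\ref{thm:pdformula}. The bijection would read each crossing tile of a pipe dream and assign its row index to the Rothe cell labelled by the corresponding inversion, placing redundant crossings into the same cell as extra set elements. Under this approach, the work would be to show, using $1432$-avoidance, that the resulting filling satisfies \eqref{eq:row-cond}--\eqref{eq:flag-cond} and that the map is invertible.
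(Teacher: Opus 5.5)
The paper does not prove this statement. It is imported from Fan--Guo \cite{fg_svrt}, and it cannot be recovered from the paper's bijection $\SVRT(w)\leftrightarrow\KKoh(w)$, because the paper derives the \K-Kohnert conjectures \emph{from} it. So your outline has to stand on its own, and it does not yet.

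Your reduction is sound, and simpler than you suggest. Your claim that $w_0$ contains $1432$ is false: $w_0$, like every dominant permutation, avoids $1432$ because $1432$ contains $132$. If $j$ is the first ascent of $w$, then $ws_j$ is again $1432$-avoiding. A new occurrence would have to use positions $j,j+1$ as its ``$43$'' or ``$32$''. It would then need its ``$1$'' at some $i<j$ with $w(i)<w(j)$, which is impossible since $w(1)>\cdots>w(j)$. Your description of $D(ws_j)$ (rows $j,j+1$ swapped, cell $(j,w(j))$ added) and your dominant base case are also correct.

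The genuine gap is the step you flag yourself, $F_w=\overline{\partial}_jF_{ws_j}$, which is the entire content of the theorem. You name a strategy but define no involution, grouping, or pairing. This is not a routine omission, because the identity is false in general even when $ws_j$ avoids $1432$, so no argument blind to the pattern in $w$ can work. Take $w=1432$ and $j=1$. Directly, $D(4132)=\{(1,1),(1,2),(1,3),(3,2)\}$ and $F_{4132}=x_1^3(x_2+x_3-x_2x_3)=\mathfrak{G}_{4132}$. Hence $\overline{\partial}_1F_{4132}=\mathfrak{G}_{1432}$, which contains $x_1x_2^2$. But no $T\in\SVRT(1432)$ has weight $(1,2,0,0)$ (compare Example~\ref{ex:1432}). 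So your tableau-level identity must break down exactly when $w$ contains $1432$, and you have not identified the mechanism.

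Your heuristic does not supply that mechanism. The flag condition is only an upper bound, so entries $j$ and $j+1$ can occupy any row $r\geq j$ of $D(w)$, and $\overline{\partial}_j$ acts on all of them, not just on rows $j,j+1$. The fact that makes Bender--Knuth-type pairings (including the set-valued version) work is that, in each row, the entries $j,j+1$ not tied to a column neighbour form a contiguous run that can be rebalanced. That fact comes from skew-shape geometry, which $D(w)$ lacks, so it would have to be re-proved using $1432$-avoidance.

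The fallback bijection $\PD(w)\to\SVRT(w)$ is likewise only named. You would need to show three things, none of which is addressed:
\begin{enumerate}
\item the crossings assigned to a single Rothe cell lie in distinct rows;
\item \eqref{eq:row-cond}--\eqref{eq:flag-cond} hold, with \eqref{eq:row-cond} requiring $1432$-avoidance;
\item the map is invertible on non-reduced pipe dreams.
\end{enumerate}
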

	
	\begin{example}
		For $w=1423$, the set $\SVRT(w)$ is given in Figure~\ref{fig:1423-svrt}.
	\end{example}
	
	\begin{figure}[ht]
		\centering
		\includegraphics{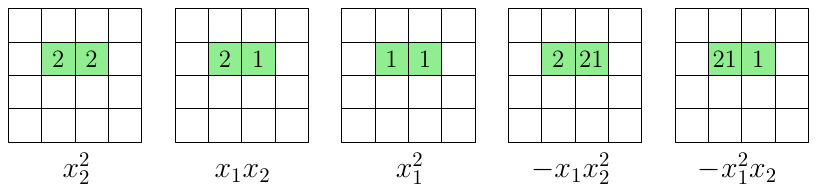}
		\caption{The set $\SVRT(w)$ for $w=1423$. The corresponding signed monomials in $\mathfrak{G}_w$ are shown below each tableau.}
		\label{fig:1423-svrt}
	\end{figure}
	
	For $w$ 1432-avoiding, we construct a weight-preserving bijection
	\[
	\SVRT(w)\longleftrightarrow \KKoh(w).
	\]
	
	\subsection{Tableaux to Grave Diagrams}\phantom{}\newline\vspace{-2ex}
	
	\begin{definition}
		\label{def:theta}
		For $T\in \SVRT(w)$ and $(r,c)\in D(w)$, write
		\[
		T(r,c)=\{a_1<a_2<\cdots<a_m\}.
		\]
		Define $\Theta_w(T)=(B_T,G_T)$ by placing a box at $(a_1,c)$ and ghosts at
		$(a_2,c),\ldots,(a_m,c)$. Explicitly:
		\begin{align*}
			B_T&=\left\{\left(\min \, T(r,c),\,c\right)\mid(r,c)\in D(w)\right\},\\
			G_T&=\left\{(a,c)\mid \mbox{ there exists } (r,c)\in D(w) \mbox{ with } a\in T(r,c) \mbox{ and }a>\min \, T(r,c) \right\}.
		\end{align*}
	\end{definition}
	
	\begin{example}
		\label{exp:theta_example}
		An SVRT $T$ for $w=12365847$ and the corresponding grave diagram $\Theta_w(T)$ are shown in Figure~\ref{fig:svrt_to_grave_diagram}.
	\end{example}
	
	\begin{figure}[ht]
		\centering
		\includegraphics[scale=1]{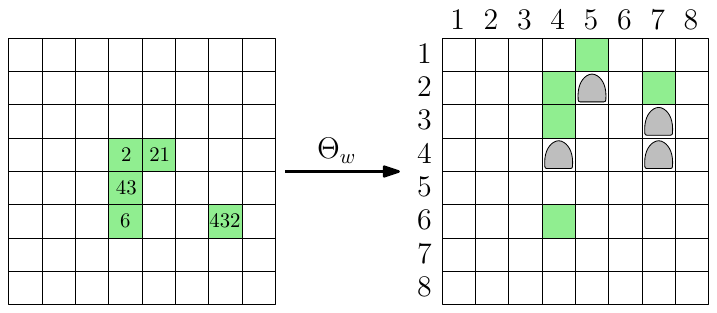}
		\caption{An example of the map $\Theta_w$.}
		\label{fig:svrt_to_grave_diagram}
	\end{figure}
	
	\begin{lemma}
		\label{lem:theta-welldefined}
		If $T\in\SVRT(w)$, then $\Theta_w(T)$ is a grave diagram.
	\end{lemma}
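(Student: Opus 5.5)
To show $\Theta_w(T)=(B_T,G_T)$ is a grave diagram we need three things: $B_T$ and $G_T$ lie in $[n]\times[n]$, they are disjoint, and (implicitly, for weight purposes) the cell placements do not collide. Containment is immediate, since every entry of $T(r,c)$ lies in $[n]$ and $c\in[n]$. All the content is in disjointness, which we reduce to a single column: every cell of $B_T\cup G_T$ produced from $(r,c)\in D(w)$ lies in column $c$.

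\textbf{Step 1: the sets in a column are disjoint.} Fix a column $c$ and list the cells of $D(w)$ in that column as $(r_1,c),\ldots,(r_k,c)$ with $r_1<\cdots<r_k$. Applying the column condition \eqref{eq:col-cond} to consecutive pairs gives
\[\max T(r_1,c)<\min T(r_2,c)\leq \max T(r_2,c)<\min T(r_3,c)<\cdots.\]
So the sets $T(r_1,c),\ldots,T(r_k,c)$ are pairwise disjoint, and in fact lie in disjoint consecutive intervals. The flag condition \eqref{eq:flag-cond} is not needed here; it only guarantees the rows land in $[n]$, which already holds.

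\textbf{Step 2: boxes and ghosts do not collide.} Suppose some $(a,c)\in B_T\cap G_T$. Then $a=\min T(r,c)$ for some $(r,c)\in D(w)$, and $a\in T(r',c)\setminus\{\min T(r',c)\}$ for some $(r',c)\in D(w)$. Since $a$ lies in both $T(r,c)$ and $T(r',c)$, Step 1 forces $r=r'$. But then $a$ would be both the minimum and a non-minimum of the same set, which is impossible. Hence $B_T\cap G_T=\varnothing$.

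\textbf{Step 3: the weight is preserved.} By the same disjointness, the map sending an element $a\in T(r,c)$ to the occupied cell $(a,c)$ is injective into $B_T\cup G_T$. Therefore $\#\Theta_w(T)=\sum|T(r,c)|$, and $\wt(\Theta_w(T))=\wt(T)$. This is worth recording for the later bijection, though it is not strictly required for the lemma.

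There is no real obstacle here; the only point needing care is noticing that the strict column inequality \eqref{eq:col-cond} gives disjointness across different Rothe cells in the same column. The row condition \eqref{eq:row-cond} is irrelevant, since different columns cannot collide.
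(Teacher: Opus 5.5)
Your proof is correct and is essentially the paper's argument: reduce to a single column, use the strict column condition \eqref{eq:col-cond} to see that the sets $T(r,c)$ in that column are pairwise disjoint, and conclude $B_T\cap G_T=\varnothing$. Your Step 2 writes out the box/ghost disjointness that the paper calls clear, and your Step 3 is the weight identity the paper records right after the lemma.
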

	
	\begin{proof}
		Cells of $D(w)$ in different columns give boxes/ghosts in different columns of $\Theta_w(T)$. So issues with $\Theta_w(T)$ could only arise from cells in the same column of $D(w)$. Fix a column $c$ and $(r,c),(r',c)\in D(w)$ with $r<r'$. The column condition (\ref{eq:col-cond}) gives
		\[
			\max \, T(r,c)<\min \, T(r',c),
		\]
		so the sets $T(r,c)$ and $T(r',c)$ are disjoint. Thus two different cells of $D(w)$ in the same column cannot produce the same occupied cell in $\Theta_w(T)$. Clearly $B_T\cap G_T=\varnothing$.
	\end{proof}
	
	Observe that the map $\Theta_w$ is weight-preserving:
	\begin{equation}
		\label{eq:theta-weight}
		\wt(\Theta_w(T))=\wt(T).
	\end{equation}
	
	\medskip
	
	\subsection{Grave Diagrams to Tableaux}\phantom{}\newline\vspace{-2ex}
	
	In the following definitions, let $E=(B,G)$ be a grave diagram.
	
	\begin{definition}
		Fix a column $c$ of $E$, and suppose $c$ has $k$ occupied cells. Form the \newword{column word} $W_c(E)\in \{\blacksquare,\boxtimes\}^k$ by reading the occupied cells of column $c$ from north to south ignoring empty spaces. Record $\blacksquare$ for each box encountered, and $\boxtimes$ for each ghost.
	\end{definition}
	
	\begin{definition}
		We say column $c$ of $E$ is \newword{threaded} if 
		\[W_c(E) = (\blacksquare\,\boxtimes^{k_1})\cdots (\blacksquare\,\boxtimes^{k_p})\]
		for some $k_1,\ldots,k_p\geq 0$. In this case, call the blocks $(\blacksquare\,\boxtimes^{k_j})$ the \newword{threads} of column $c$. We treat empty columns as being vacuously threaded, but having no threads.
	\end{definition}
	
	In other words, a thread consists of a box followed by all ghosts prior to the next box downwards in the column (if present, else the end of the column). A (nonempty) column can only fail to be threaded if its topmost occupied cell is a ghost.
	
	\begin{definition}
		Call $E$ \newword{thread-compatible with $w$} if every column of $E$ is threaded and for each $c$, the number of threads in column $c$ of $E$ equals the number of cells in column $c$ of $D(w)$.
	\end{definition}
	
	\begin{definition}
		\label{def:xi}
		Let $E$ be thread-compatible with $w\in S_n$. From north to south, assign the threads in column $c$ of $E$ to the cells of $D(w)$ in column $c$. Define a filling 
		\[\Xi_w(E):D(w)\to 2^{[n]}\setminus\{\varnothing\}\]
		as follows: when the thread assigned to $(r,c)\in D(w)$ occupies rows indexed by $S$ in $E$, set
		$\Xi_w(E)(r,c)=S$.
	\end{definition}
	
	\begin{example}
		Reversing Example~\ref{exp:theta_example}, we show in Figure~\ref{fig:grave_diagram_to_svrt} a grave diagram $E$ thread-compatible with $w=12365847$ and the corresponding filling $\Xi_w(E)$ of $D(w)$. The threads in each column of $E$ are indicated with outlines.
	\end{example}
	
	\begin{figure}[ht]
		\centering
		\includegraphics[scale=1]{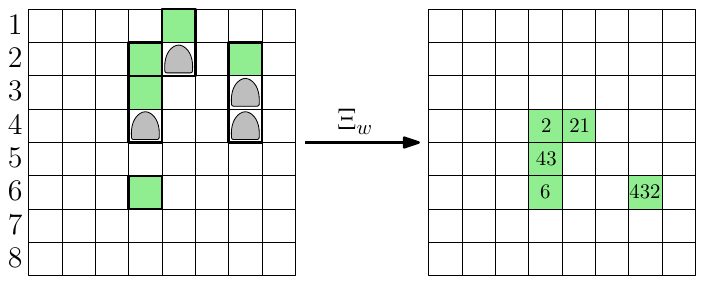}
		\caption{An example of the map $\Xi_w$.}
		\label{fig:grave_diagram_to_svrt}
	\end{figure}

	
	\subsection{The Easy Inclusion} \phantom{}\newline\vspace{-2ex}
	
	To work towards the bijection
	\[
		\begin{tikzcd}
			\SVRT(w) \arrow[r, bend left, "\Theta_w"] & \KKoh(w) \arrow[l, bend left, "\Xi_w"]
		\end{tikzcd}
	\]
	we first prove the inclusion $\Theta_w(\SVRT(w)) \subseteq \KKoh(w)$. We begin with the following technical lemma describing the local properties of $\Theta_w$ within each column.
	
	\begin{lemma}
		\label{lem:same-column-isolation}
		Let $T\in\SVRT(w)$. Choose $r$ maximal such that some cell in row $r$ is
		not filled by $\{r\}$, and choose $c$ minimal such that $T(r,c)\neq \{r\}$. Set
		$p=\min \, T(r,c)$. Consider the grave diagram $E=\Theta_w(T)$, with boxes $B$ and ghosts $G$.
		
		Then every occupied cell $(a,c)\in B\cup G$ with $p+1\leq a\leq r$ satisfies 
		\begin{itemize}
			\item $a\in T(r,c)$;
			\item $a\notin T(r',c)$ for any $(r',c)\in D(w)$ with $r'\neq r$.
		\end{itemize}
		
		Consequently $p<r$, the smallest $q>p$ for which $(q,c)\notin B$ is $q=p+1\leq r$, and if $(p+1,c)\in G$, then it lies in the same thread as $(p,c)$.
	\end{lemma}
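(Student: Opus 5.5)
The plan is to sort the cells of $D(w)$ in column $c$ into three groups: the cell $(r,c)$ itself, the cells $(r',c)$ with $r'>r$, and the cells $(r',c)$ with $r'<r$. For each group I will find which rows of column $c$ of $E=\Theta_w(T)$ it can occupy. By Definition~\ref{def:theta}, every occupied cell $(a,c)\in B\cup G$ has $a\in T(r',c)$ for some $(r',c)\in D(w)$. So it is enough to show that only the cell $(r,c)$ contributes rows in the window $p+1\le a\le r$. The first bullet then holds, and the second holds because the sets $T(r',c)$ in a column are pairwise disjoint (proof of Lemma~\ref{lem:theta-welldefined}).

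\textbf{Step 1: cells below row $r$.} Row $r$ was chosen maximal among rows containing a cell not filled by its own row index. So every $(r',c)\in D(w)$ with $r'>r$ has $T(r',c)=\{r'\}$. These cells occupy only row $r'>r$, which lies outside the window.

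\textbf{Step 2: cells above row $r$.} For $(r',c)\in D(w)$ with $r'<r$, the column condition (\ref{eq:col-cond}) gives $\max T(r',c)<\min T(r,c)=p$. These cells occupy only rows $<p$, again outside the window. Combining Steps 1 and 2, any occupied $(a,c)$ with $p+1\le a\le r$ comes from $T(r,c)$, and from no other cell of column $c$. This proves both bullets.

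\textbf{Step 3: the consequences.} The flag condition (\ref{eq:flag-cond}) gives $T(r,c)\subseteq[p,r]$. If $p=r$ we would have $T(r,c)=\{r\}$, contradicting the choice of $c$, so $p<r$ and $p+1\le r$. The cell $(p,c)$ is a box, being the image of $\min T(r,c)$. If $(p+1,c)$ is occupied, then by the first bullet $p+1\in T(r,c)$, and since $p+1>\min T(r,c)$ it is placed as a ghost. Hence $(p+1,c)\notin B$ in either case, so the smallest $q>p$ with $(q,c)\notin B$ is $q=p+1$. Finally, if $(p+1,c)\in G$, it is the next occupied cell directly below the box $(p,c)$. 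By the definition of threads (a box followed by the ghosts beneath it up to the next box), it therefore lies in the thread of $(p,c)$.

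The main point needing care is Step 1: the maximality of $r$ must be used for \emph{every} cell in lower rows of column $c$, not only those in row $r$. The minimality of $c$ is not needed for these claims; it matters only for how the lemma is applied later.
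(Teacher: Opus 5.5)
Your proposal is correct and follows the paper's proof. Both arguments handle the other cells of column $c$ the same way: cells above row $r$ are controlled by column-strictness (their entries are $<p$), and cells below are controlled by maximality of $r$ (so $T(r',c)=\{r'\}$ with $r'>r$). Both also use the flag condition to rule out $p=r$, and you are right that the minimality of $c$ is not needed for this lemma.
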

	
	\begin{proof}
		If $p=r$, then the flag condition (\ref{eq:flag-cond}) forces every entry of $T(r,c)$ to be equal to $r$, contrary to $T(r,c)\neq\{r\}$. Suppose $(r',c)\in D(w)$ with $r\neq r'$ and $a\in T(r',c)$. If $r'<r$, then column-strictness gives 
		\[a\leq \max \, T(r',c)<\min \, T(r,c)=p.\]
		If $r'>r$, then the maximal choice of $r$ forces $T(r',c)=\{r'\}$, so $a=r'>r$. The two bullet points of the lemma follow.
		
		The set $T(r,c)$ contributes a box only in row $p$ of $E$; its other entries (if any),
		contribute ghosts in the same thread as $(p,c)$. By the two bullet points, $(p+1,c)\notin B$, and $(p+1,c)\in G$ would clearly belong to the same thread as $(p,c)$.
	\end{proof}

	\begin{proposition}
		\label{prop:theta-in-kkoh}
		For every $w\in S_n$ and every $T\in\SVRT(w)$, one has
		\[\Theta_w(T)\in \KKoh(w).\]
	\end{proposition}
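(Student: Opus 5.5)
We argue by induction on the statistic
\[N(T)=\sum_{(r,c)\in D(w)}\sum_{a\in T(r,c)}(r-a)+\sum_{(r,c)\in D(w)}(\#T(r,c)-1),\]
which is zero exactly when $T$ is the superstandard tableau $T_0$ with $T_0(r,c)=\{r\}$ for all $(r,c)\in D(w)$. The base case is immediate: $\Theta_w(T_0)=(D(w),\varnothing)\in\KKoh(w)$.

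For the inductive step, take $T\neq T_0$ and choose $r$, $c$, $p=\min T(r,c)$ as in Lemma~\ref{lem:same-column-isolation}. We build a tableau $T'\in\SVRT(w)$ with $N(T')<N(T)$ such that $\Theta_w(T)$ is obtained from $\Theta_w(T')$ by one Kohnert or \K-Kohnert move, with the box in column $c$ jumping from row $p+1$ to row $p$. The lemma gives $p<r$, $(p+1,c)\notin B$, and that all occupied cells of column $c$ in rows $p+1,\dots,r$ come only from $T(r,c)$. We split into two cases.
\begin{itemize}
\item If $p+1\notin T(r,c)$, set $T'(r,c)=(T(r,c)\setminus\{p\})\cup\{p+1\}$, which undoes an ordinary Kohnert move.
\item If $p+1\in T(r,c)$, set $T'(r,c)=T(r,c)\setminus\{p\}$, which undoes a \K-Kohnert move: the box at $(p+1,c)$ jumps to $(p,c)$ and leaves a ghost behind.
\end{itemize}
In both cases $N$ drops by one, and $T'$ agrees with $T$ off $(r,c)$.

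Next we check that $T'\in\SVRT(w)$. The flag condition holds because $p+1\le r$. Column strictness holds because entries above come from cells $(r',c)$ with $r'<r$, which have maxima $<p$, and cells below are $\{r'\}$ with $r'>r$. The row condition needs two comparisons.
\begin{itemize}
\item For cells $(r,c')$ with $c'<c$, we have $T(r,c')=\{r\}$ by the minimality of $c$, and $\max T'(r,c)\le r$.
\item For cells $(r,c')$ with $c'>c$, we need $\max T(r,c')\le p+1$. We know $\max T(r,c')\le \min T(r,c)=p$, so this holds.
\end{itemize}
So $T'$ is valid.

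It remains to verify that the move is legal in $\Theta_w(T')$. The box at $(p+1,c)$ must be rightmost among occupied cells in row $p+1$. The target cell is $(p,c)$, and it is empty in $\Theta_w(T')$, since $p$ lies in no other $T(r',c)$ by column strictness. The cells strictly between $(p,c)$ and $(p+1,c)$ form an empty range, so the passage condition is vacuous.

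The main obstacle is the rightmost condition in row $p+1$. We must show that no cell $(r'',c')\in D(w)$ with $c'>c$ has $p+1\in T'(r'',c')$. Rows $r''>r$ have singleton entries $r''>p+1$. Row $r''=r$ has entries $\le p$ by the row condition. The hard case is $r''<r$ with $c'>c$. Here I would use the northwest property of $D(w)$: if $(r'',c')$ and $(r,c)$ lie in $D(w)$, then $(r'',c)\in D(w)$. Column strictness then gives $\max T(r'',c)<p$, and the row condition gives $\min T(r'',c')\le\max T(r'',c)<p$. That bound controls the minimum but not the maximum. We also need to exclude a ghost of $T(r'',c')$ at row $p+1$. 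If this direct inequality fails, I would instead adjust the choice of $(r,c)$ to avoid the bad configuration. For example, I would choose the cell maximizing the column among the minimal-violation candidates, or process the rightmost column first, so that cells to the right in row $p+1$ are already standard. I expect this last point to require the most care.
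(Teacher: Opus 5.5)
\paragraph{Verdict.} Your setup matches the paper's proof: the same statistic, the same choice of $r$ maximal and then $c$ minimal, and your two cases are exactly the paper's single formula $T'(r,c)=(T(r,c)\setminus\{p\})\cup\{p+1\}$. Your checks that $T'\in\SVRT(w)$ and that $(p,c)$ is empty in $\Theta_w(T')$ are correct. One small slip: in the \K-Kohnert case $N$ drops by $r-p+1$, not by one, but only $N(T')<N(T)$ is needed.

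\paragraph{The gap.} You leave unresolved the rightmost condition in row $p+1$ for a cell $(r'',c')\in D(w)$ with $r''<r$ and $c'>c$. It looked hard because you took too weak an inequality from the row condition \eqref{eq:row-cond}. For $c<c'$ that condition reads $\max T(r'',c')\le \min T(r'',c)$, so it bounds the \emph{maximum} of the right-hand cell. You used it in exactly this form when checking row $r$ of $T'$. Now apply it in row $r''$. The northwest property gives $(r'',c)\in D(w)$, and column strictness in column $c$ gives $\max T(r'',c)<\min T(r,c)=p$. Hence
\[
\max T'(r'',c')=\max T(r'',c')\le \min T(r'',c)\le \max T(r'',c)<p<p+1 .
\]
So $p+1\notin T'(r'',c')$, and $\Theta_w(T')$ has neither a box nor a ghost at $(p+1,c')$. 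This is the paper's argument, which it phrases as the contradiction $q\le \min T(s,c)\le \max T(s,c)<p<q$.

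\paragraph{The fallback.} Your proposed fallback of re-choosing $(r,c)$, or processing the rightmost column first, is not needed, and as written it is not an argument. It would also undercut your other checks. Column strictness below $(r,c)$ used the maximality of $r$, and the row check against cells to the left of $(r,c)$ used the minimality of $c$. Insert the inequality above and your proof is complete.
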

	
	\begin{proof}
		For fixed $w$, we prove the statement by induction on the nonnegative integer
		\[
		\mathcal{I}(T)\coloneqq
		\sum_{(i,j)\in D(w)}\left(\sum_{a\in T(i,j)}(i-a)\right)
		\,\,
		+\sum_{(i,j)\in D(w)}\left(\#T(i,j)\,-1\right).
		\]
		The flag condition (\ref{eq:flag-cond}) implies $a\leq i$ for every $a\in T(i,j)$, so
		$\mathcal{I}(T)\geq 0$. If $\mathcal{I}(T)=0$, then every cell has exactly one entry, the row index of the cell. 
		Hence $\mathcal{I}(T)=0$ implies $T(i,j)=\{i\}$ for all $(i,j)\in D(w)$, and
		\[\Theta_w(T)=(D(w),\varnothing)\in\KKoh(w).\]
		
		Assume now that $\mathcal{I}(T)>0$. Choose $r$ maximal such that some cell in row
		$r$ is not filled by $\{r\}$, and then choose $c$ minimal such that
		$T(r,c)\neq \{r\}$. Put
		\[p=\min \, T(r,c),\qquad q=p+1,\qquad E=\Theta_w(T).\]
		By Lemma~\ref{lem:same-column-isolation}, we have $q\leq r$, and the cell
		$(q,c)$ of $E$ is either empty or is a ghost in the same thread as the box $(p,c)$ (depending on whether $q\in T(r,c)$).
		
		We define a new tableau $T'$ as follows. All cells except $(r,c)$ retain their original filling. For $(r,c)$, define
		\[T'(r,c) = \left(T(r,c)\setminus\{p\}\right)\cup\{q\}\]		

		Regardless of whether or not $q\in T(r,c)$, it follows that 
		\[
		\min \, T'(r,c)=q
		\quad\text{and}\quad
		\max \, T'(r,c)\leq r.
		\]
		We first check that $T'\in\SVRT(w)$. The flag condition for $T'$ only needs to be checked at $(r,c)$, where it clearly still holds. For the column condition (\ref{eq:col-cond}), we need only check column $c$. If $(s,c)\in D(w)$ with $s<r$, then
		\[
		\max \, T'(s,c)=\max \, T(s,c)<\min \, T(r,c)=p<q=\min \, T'(r,c).
		\]
		Consider $(s,c)\in D(w)$ with $s>r$. By the maximal choice of $r$, it follows that $T(s,c)=\{s\}$. But the flag condition forces
		\[\max \, T'(r,c)\leq r<s=\min \, T'(s,c).\]
		Hence the column condition still holds for $T'$.
		
		To verify the row condition on $T'$, we need only check row $r$. Let $(r,d)\in D(w)$ with $d>c$. The row
		condition on $T$ gives
		\[
		\max \, T'(r,d) = \max \, T(r,d)\leq \min \, T(r,c)=p<q=\min \, T'(r,c).
		\]
		Next consider $(r,d)\in D(w)$ with $d<c$. By the minimal choice of $c$, we have $T(r,d)=\{r\}$. But then the flag condition gives
		\[
		\min \, T'(r,d)=r\geq \max \, T'(r,c).
		\]
		Thus $T'\in\SVRT(w)$. 
		
		By construction, $\mathcal{I}(T')<\mathcal{I}(T)$. Let $E'=\Theta_w(T')$. By induction, $E'\in\KKoh(w)$. We will show that $E$ is obtained from $E'$ by one legal Kohnert or \K-Kohnert move shifting a box in column $c$.
		
		In $E'$, the cell $(q,c)=(p+1,c)$ is a box and the cell $(p,c)$ is empty. It remains only to verify that $(q,c)$ is the rightmost occupied cell in row $q$ of $E'$. Suppose not. Then there is an occupied cell $(q,d)$ of $E'$ with $d>c$. Since $T$ and $T'$ differ only at $(r,c)$, there exists a Rothe cell $(s,d)\in D(w)$ such that $q\in T(s,d)=T'(s,d)$.
		
		If $s=r$, then the row condition in row $r$ of $T$ gives
		\[q\leq \max \, T(r,d)\leq \min \, T(r,c)=p,\]
		contradicting $q=p+1$. If $s<r$, then $(r,c),(s,d)\in D(w)$ with $s<r$ and $c<d$. By the northwest property of Rothe diagrams, $(s,c)\in D(w)$.
		
		But consider the column condition in column $c$ of $T$ gives
		\[\max \, T(s,c)<\min \, T(r,c)=p,\]
		while the row condition in row $s$ of $T$ gives
		\[\min \, T(s,c)\geq \max \, T(s,d)\geq q.\]
		Hence
		\[
		q\leq \min \, T(s,c)\leq \max \, T(s,c)<p<q,
		\]
		a contradiction. Lastly, suppose $s>r$. By the maximal choice of $r$, necessarily $T(s,d)=\{s\}$. But then
		$q\in T(s,d)$ implies $q=s$, contradicting $q\leq r<s$. 
		
		These contradictions show there can be no such cell $(s,d)\in D(w)$, and therefore no such occupied cell $(q,d)\in E'$. Then the box $(q,c)$ is the rightmost occupied cell in row $q$ of $E'$, and $E$ is obtained from $E'$ by a (Kohnert if $q\notin T(r,c)$, else \K-Kohnert) move taking the box $(q,c)$ up one row to $(p,c)$. Thus $E\in\KKoh(w)$, completing the induction.
	\end{proof}
	
	\begin{remark}
		\label{rmk:simple_1}
		Note that in the previous proof, the Kohnert move $E'\longrightarrow E$ has a box moving up only a single row (row $q=p+1$ to row $p$). We will utilize this fact about the proof when we study such \newword{simple} moves in Section~\ref{sec:simple}. See the proof of Lemma~\ref{lem:simple_theta_codomain}.
	\end{remark}

	\subsection{The Hard Inclusion}\phantom{}\newline\vspace{-2ex}
	
	We now turn our attention to the map $\Xi_w$. The converse inclusion 
	\[\Xi_w\left(\KKoh(w)\right)\subseteq \SVRT(w)\]
	requires extracting recursive information from a given \K-Kohnert diagram, with no knowledge of any specific move history yielding it. Much of this work does not demand any pattern avoidance assumptions.
	
	The following lemma describes the effect of a single move on the threads in a column. Its proof is straightforward from the definitions. See Figure~\ref{fig:thread_change_under_single_move} for a proof by picture.
	\begin{lemma}
		\label{lem:column-word-update}
		Let $E=(B,G)$ be a grave diagram on $[n]\times [n]$ thread-compatible with $w$. Suppose an ordinary or \K-Kohnert move on $E$ moves a box from row $q$ to row $p<q$ in column $c$ of $E$, producing $E'$. Then $E'$ is also thread-compatible with $w$. Specifically, let the threads in column $c$ with leading box in any of the rows $p,p+1,p+2,\ldots,q$ have row sets 
		\[\{p+1\},\,\{p+2\},\,\ldots,\,\{q-1\},\,\{q\}\cup H \quad\mbox{for}\quad H\subseteq\{q+1,q+2,\ldots,n\}.\]
		Then the threads occupying rows $p,p+1,p+2,\ldots,q$ in $E'$ have row sets
		\[
		\{p\},\,\{p+1\},\,\ldots,\,\{q-2\},\,J\cup H
		\]
		where
		\[
			J=
			\begin{cases}
				\{q-1\} &\mbox{ for an ordinary Kohnert move,}\\
				\{q-1,q\} &\mbox{ for a \K-Kohnert move.}	
			\end{cases}
		\]
	\end{lemma}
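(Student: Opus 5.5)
The plan is to argue entirely inside column $c$, since both kinds of move change nothing in the other columns, and then to read off the new column word. Because every column other than $c$ is untouched, it suffices to show that column $c$ of $E'$ is threaded and has the same number of threads as column $c$ of $E$.

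First, I will pin down the local configuration. By Definition~\ref{def:k_koh_moves}, the moved box starts at $(q,c)\in B$. The target row is $p=\max\{r<q : (r,c)\notin B\cup G\}$, and every occupied cell in rows $p+1,\ldots,q$ of column $c$ is a box. So rows $p+1,\ldots,q$ of column $c$ are all boxes, and row $p$ is empty. Since a thread is a box followed by the ghosts before the next box, each box in rows $p+1,\ldots,q-1$ is the whole of its thread, because the next row holds a box. This gives the singleton row sets $\{p+1\},\ldots,\{q-1\}$. The thread led by $(q,c)$ has row set $\{q\}\cup H$, where $H$ is the set of rows of the consecutive ghosts after row $q$ before the next box, so $H\subseteq\{q+1,\ldots,n\}$. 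The threads lying above row $p$ are unaffected.

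Second, I will compute $E'$. Row $p$ now holds a box, and rows $p+1,\ldots,q-1$ still hold boxes. Row $q$ is empty for an ordinary Kohnert move and a ghost for a \K-Kohnert move, and the cells below row $q$ are unchanged. Reading the column word from north to south, the boxes in rows $p,\ldots,q-1$ are still consecutive. So rows $p,\ldots,q-2$ are singleton threads. The box at $q-1$ now absorbs whatever follows it before the next box, which is either nothing at $q$ plus the ghosts of $H$, or the ghost at $q$ plus $H$. This gives the thread $J\cup H$ with the stated $J$.

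Third, I will check thread-compatibility. The topmost occupied cell of column $c$ is still a box: either it lies above row $p$ and is unchanged, or it is the new box at row $p$. Hence the column stays threaded. The number of boxes in column $c$ is unchanged, and threads correspond bijectively to boxes. So the thread count still equals the number of cells of $D(w)$ in column $c$, and $E'$ is thread-compatible with $w$.

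Every step is a direct unwinding of the definitions. The only point that needs care is the edge case $q=p+1$. There the list of singletons is empty, and the thread $J\cup H$ starts at row $p=q-1$; the same bookkeeping still applies verbatim.
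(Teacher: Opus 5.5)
Your proposal is correct and takes essentially the same approach as the paper. The paper only says the lemma is straightforward from the definitions and gives a proof by picture. Your argument spells that unwinding out: the move's definition forces rows $p+1,\ldots,q$ to hold boxes and row $p$ to be empty, you then recompute the column word, and you note that the top cell stays a box and the box count in column $c$ is unchanged.
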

	
	\begin{figure}[ht]
		\centering
		\includegraphics[scale=.85]{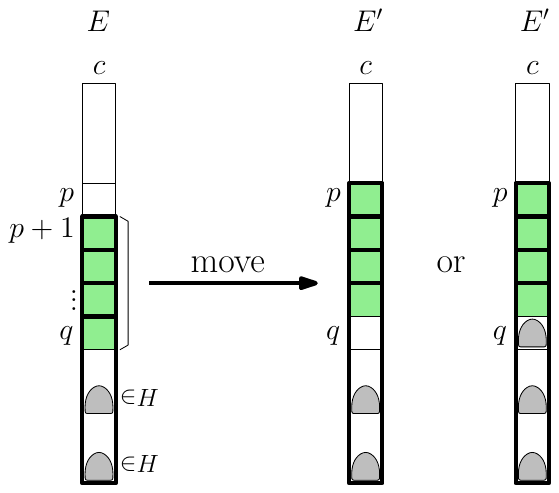}
		\caption{A visualization of Lemma~\ref{lem:column-word-update}.}
		\label{fig:thread_change_under_single_move}
	\end{figure}

	Since $D(w)$ is trivially thread-compatible with $w$, it follows from Lemma~\ref{lem:column-word-update} that all elements of $\KKoh(w)$ are thread-compatible with $w$. We next analyze the threads in a given column of any $E\in \KKoh(w)$.
	
	\begin{lemma}
		\label{lem:one-column}
		Fix a column $c$ of $E\in \KKoh(w)$, and let column $c$ of $D(w)$ have boxes in rows $i_1<i_2<\cdots<i_m$. Suppose that from north to south, the threads of column $c$ of $E$ have row sets $R_1,\ldots,R_m$. Then 
		\[\max(R_k)\leq i_k\quad\text{for all } 1\leq k\leq m.\]
	\end{lemma}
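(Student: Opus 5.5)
We argue by induction on the length of a move sequence $(D(w),\varnothing)=E_0\to E_1\to\cdots\to E_N=E$ of ordinary and \K-Kohnert moves. Every $E_t$ is thread-compatible with $w$ by Lemma~\ref{lem:column-word-update}, so for each $t$ the $k$-th thread of column $c$ (counted north to south) has a well-defined row set $R_k^{(t)}$, and there are exactly $m$ threads. For $t=0$ we have $R_k^{(0)}=\{i_k\}$, so $\max R_k^{(0)}=i_k$ and the claim holds.

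For the inductive step, moves in columns other than $c$ leave column $c$ unchanged. So suppose the move $E_t\to E_{t+1}$ takes a box in column $c$ from row $q$ to row $p<q$. Lemma~\ref{lem:column-word-update} describes the new threads exactly. The threads whose leading boxes lie in rows $p+1,\ldots,q-1$ (say these are threads $k_0+1,\ldots,k_1-1$) have singleton row sets $\{p+1\},\ldots,\{q-1\}$. The moving thread is thread $k_1$, with row set $\{q\}\cup H$.

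After the move, the threads in this range are re-indexed. The new thread $k_0+1$ is $\{p\}$, the new thread $k_0+2$ is $\{p+1\}$, and so on. The new thread $k_1$ is $J\cup H$. All threads outside rows $p,\ldots,q$ keep both their index and their row set, since every other thread's leading box lies outside this interval. Note that rows $p+1,\ldots,q-1$ were fully occupied by boxes, since the move jumps only over boxes.

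We must therefore check the bound $\max(\text{new } R_k)\leq i_k$ for $k_0<k\leq k_1$.

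For $k_0<k<k_1$, the new set $R_k^{(t+1)}$ is the singleton of the row of the old thread $k-1$ (or $\{p\}$ when $k=k_0+1$). This row is strictly smaller than the row of the old thread $k$, which is $\max R_k^{(t)}\leq i_k$ by induction. Hence the bound holds for these $k$.

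For $k=k_1$, the new set is $J\cup H$. Its maximum is either $\max H$, if $H$ is nonempty, or at most $q$. Both of these are at most $\max R_{k_1}^{(t)}=\max(\{q\}\cup H)\leq i_{k_1}$. So the bound holds here too.

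The only delicate point is bookkeeping. One must confirm that the reindexing in Lemma~\ref{lem:column-word-update} matches threads to the same Rothe cells in the north-to-south order. This holds because the number of threads is preserved and their order is preserved. One must also confirm that a ghost thread tail $H$ never gets attached to a thread with a smaller index. It cannot: $H$ stays attached to the same (the $k_1$-th) thread, now led from row $q-1$.

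With both points verified, induction on $t$ gives $\max R_k\leq i_k$ for all $k$ in $E=E_N$.
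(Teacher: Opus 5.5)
Your proof is correct and takes essentially the same approach as the paper. Both induct along a move sequence and use Lemma~\ref{lem:column-word-update} to see that each thread's new row set has maximum at most that of the old thread with the same index, giving $\max(R'_k)\leq\max(R_k)\leq i_k$.
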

	
	\begin{proof}
		For $E=D(w)$, clearly $R_k = \{i_k\}$ and the result is immediate. Suppose that the result holds for $E$, and consider a diagram $E'\in\KKoh(w)$ obtained from a single move on $E$. Observe that any move shifting boxes in any other column will not affect the threads of column $c$, so assume $E'$ is obtained from a move shifting a box in column $c$ from row $q$ to row $p<q$. 
		
		The affected threads in column $c$ of $E$ are consecutive. Suppose they are 
		\[R_a,R_{a+1},\ldots,R_{a+M},\]
		so $M=q-p-1$. Then from the anatomy of a single move,
		\[
			R_{a+k}=\{p+1+k\}\quad\mbox{for}\quad(0\leq k<M), \qquad R_{a+M}=\{q\}\cup H
		\]
		for some set $H$ of ghost rows below $q$. Write $R'_1,\ldots,R'_m$ for the threads of column $c$ of $E'$. Then by Lemma~\ref{lem:column-word-update},
		\[
			R'_{a+k}=\{p+k\}\quad(0\leq k<M),
		\]
		with $R'_{a+M}$ being either $\{q-1\}\cup H$ or $\{q-1,q\}\cup H$ according to the type of move. Hence 
		\[\max(R'_k)\leq \max(R_k)\leq i_k \quad\mbox{for all } k.\qedhere\]
	\end{proof}
	
	\begin{corollary}
		\label{cor:column-threading}
		Let $w\in S_n$ and $E\in\KKoh(w)$. Then $\Xi_w(E)$ is defined, and satisfies both the column-strict condition \eqref{eq:col-cond} and the flag condition \eqref{eq:flag-cond}.
	\end{corollary}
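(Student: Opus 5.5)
The corollary follows by combining Lemma~\ref{lem:column-word-update} with Lemma~\ref{lem:one-column}.

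\textbf{Step 1: $\Xi_w(E)$ is defined.} We need $E$ to be thread-compatible with $w$. The Rothe diagram $(D(w),\varnothing)$ is thread-compatible: every column word is $\blacksquare^{m}$, giving one thread per Rothe cell. Every $E\in\KKoh(w)$ is reached from $(D(w),\varnothing)$ by a finite sequence of ordinary and \K-Kohnert moves, and by Lemma~\ref{lem:column-word-update} each such move preserves thread-compatibility. Induction on the length of the move sequence then gives the claim. So the threads of each column $c$ can be matched north-to-south with the Rothe cells of column $c$. Each thread is nonempty, since it contains its leading box. Hence $\Xi_w(E)$ is a well-defined map $D(w)\to 2^{[n]}\setminus\{\varnothing\}$.

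\textbf{Step 2: column strictness \eqref{eq:col-cond}.} Fix a column $c$ whose Rothe cells lie in rows $i_1<\cdots<i_m$, and let $R_1,\ldots,R_m$ be the row sets of its threads, read from north to south. By the definition of threading, the column word is a concatenation of blocks $\blacksquare\,\boxtimes^{k_j}$ read downward. Each thread therefore occupies a contiguous run of the occupied cells of column $c$ in reading order, and thread $R_k$ lies entirely above thread $R_{k+1}$. This gives $\max R_k<\min R_{k+1}$. Since $\Xi_w(E)(i_k,c)=R_k$, condition \eqref{eq:col-cond} holds for every pair of Rothe cells $(i_k,c)$ and $(i_{k'},c)$ with $k<k'$.

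\textbf{Step 3: the flag condition \eqref{eq:flag-cond}.} This is precisely Lemma~\ref{lem:one-column}: $\max \Xi_w(E)(i_k,c)=\max R_k\le i_k$.

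The only point needing any care is Step 1, where thread-compatibility must be propagated along an arbitrary move history. Lemma~\ref{lem:column-word-update} already records this. The row condition \eqref{eq:row-cond} is deliberately not asserted here; that is where pattern avoidance will be needed.
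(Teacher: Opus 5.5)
Your proposal is correct and follows the paper's route. Thread-compatibility comes from Lemma~\ref{lem:column-word-update} (the paper notes this just before the corollary), column strictness comes from the north-to-south ordering of consecutive threads, and the flag condition is Lemma~\ref{lem:one-column}. You spell out the induction along the move sequence and the nonemptiness of each thread a bit more explicitly than the paper does.
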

	
	\begin{proof}
		Lemma~\ref{lem:column-word-update} shows that $E$ is thread-compatible with $w$. Lemma~\ref{lem:one-column} gives the flag condition. The column-strict condition holds trivially since all row indices in an earlier thread are strictly smaller than all row indices in the next thread.
	\end{proof}
	
	\subsection{The Row Condition and 1432-Avoidance}\phantom{}\newline\vspace{-2ex}
	
	To verify the inclusion $\Xi_w\left(\KKoh(w)\right)\subseteq \SVRT(w)$, it remains to prove the row condition~(\ref{eq:row-cond}) is satisfied by any $\Xi_w(E)$. No argument prior to this point has required $1432$-avoidance. The first place in this subsection that 1432-avoidance will be assumed is Proposition~\ref{prop:threading}. The following example illustrates the problem with 1432-patterns.
	
	\begin{example}
		\label{ex:1432}
		Let $w=1432$. Starting from $D(w)$, move the box at $(3,2)$ to $(1,2)$ by an ordinary
		Kohnert move, then apply $\Xi_w$:
		\begin{center}
			\includegraphics[scale=1]{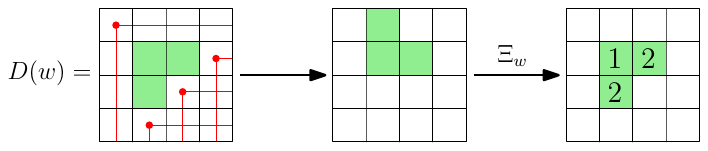}
		\end{center}
		The row condition~(\ref{eq:row-cond}) fails in row $2$.
	\end{example}
	
	We show in this subsection that such minimal failures of the map $\Xi_w$ to satisfy the row condition of SVRT are always caused locally by 1432-patterns in $w$. For the remainder of this subsection, we will study the following situation.
	
	\begin{setup}
		\label{set:1432_setup}
		Fix $E\in \KKoh(w)$ and a single ordinary or \K-Kohnert move $E\longrightarrow E'$ shifting a box in column $c$ from row $q$ to row $p<q$. Let $T=\Xi_w(E)$ and $T'=\Xi_w(E')$. 
	\end{setup}
	
	\begin{center}
		\begin{tikzcd}[row sep=large, column sep=huge]
			E\arrow[r,"\text{single\,move}"] \arrow[d, "\Xi_w"'] & E' \arrow[d, "\Xi_w"]\\
			T& T'
		\end{tikzcd}
	\end{center}
	
	Consider the execution of $\Xi_w(E)$. We first analyze the threads of $E$ and $E'$. Note that the threads in all columns except column $c$ are identical for $E$ and $E'$. Summarily, when passing from $T$ to $T'$, only entries of cells within column $c$ of $D(w)$ change. 
	
	We address column $c$ by rephrasing Lemma~\ref{lem:column-word-update} to this setting. 
	
	\begin{lemma}
		\label{lem:local-update}
		Assume Setup~\ref{set:1432_setup}.
		Suppose the threads in column $c$ of $E$ whose leading boxes lie in rows
		$p+1,p+2,\ldots,q$ are assigned to $(r_0,c),(r_1,c),\ldots,(r_M,c)\in D(w)$, where $M=q-p-1$ and
		$r_0<r_1<\cdots<r_M$.
		For all $0\leq j<M$,
		\[T(r_j,c)=\{p+1+j\},\quad\mbox{and}\quad T'(r_j,c)=\{p+j\}.\]
		For $j=M$, write $T(r_M,c)=\{q\}\cup H$ for $H\subseteq\{q+1,q+2,\ldots,n\}$. Then
		\[
			T'(r_M,c) = 
			\begin{cases}
				\{q-1\}\cup H &\mbox{if } E\longrightarrow E'\mbox{ is an ordinary Kohnert move,}\\
				\{q-1,q\}\cup H &\mbox{if } E\longrightarrow E'\mbox{ is a \K-Kohnert move.}
			\end{cases}
		\]
	\end{lemma}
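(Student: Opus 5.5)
This lemma is a direct translation of Lemma~\ref{lem:column-word-update} into the language of the fillings $T=\Xi_w(E)$ and $T'=\Xi_w(E')$. The plan is to match threads to Rothe cells before and after the move, and then read off the row sets.

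First, by Corollary~\ref{cor:column-threading} (and since $E'\in\KKoh(w)$ as well), both $E$ and $E'$ are thread-compatible with $w$, so $T$ and $T'$ are defined. In column $c$, $\Xi_w$ assigns the $k$-th thread from the north to the $k$-th Rothe cell from the north.

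The key step is to check that this indexing is preserved by the move. By Lemma~\ref{lem:column-word-update}, the move changes only the threads with leading box in rows $p+1,\dots,q$. These are replaced by the same number $M+1=q-p$ of threads, now with leading boxes in rows $p,\dots,q-1$. The cell $(p,c)$ is empty in $E$, because $p=i'$ is the first unoccupied cell above $q$. Also, every cell in rows $p+1,\dots,q$ is a box, since the move requires all occupied cells strictly between $p$ and $q$ to be boxes and there are no gaps. So these threads are consecutive, and each one except the last is the singleton of its leading box's row. Since the row sets of all other threads (those above row $p$ and those below $\max(\{q\}\cup H)$) are unchanged, the number of threads strictly north of the affected block is the same in $E$ and $E'$. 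The affected block therefore occupies the same positions in the north-to-south order, and is assigned to the same Rothe cells $(r_0,c),\dots,(r_M,c)$ in both diagrams.

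The formulas then follow by reading Lemma~\ref{lem:column-word-update} position by position. For $j<M$, the thread at position $j$ is $\{p+1+j\}$ in $E$ and $\{p+j\}$ in $E'$. The last thread is $\{q\}\cup H$ in $E$, where $H$ consists of the ghost rows below $q$ before the next box. In $E'$ it becomes $J\cup H$, with $J=\{q-1\}$ for an ordinary Kohnert move and $J=\{q-1,q\}$ for a \K-Kohnert move. This gives the values of $T(r_j,c)$ and $T'(r_j,c)$ claimed in the statement.

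The only point needing care is this thread-to-cell alignment, together with the observation that no thread merges or splits. Everything else is bookkeeping.
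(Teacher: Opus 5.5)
Your proposal is correct and takes the same approach as the paper. The paper gives no separate proof: it presents this lemma as a rephrasing of Lemma~\ref{lem:column-word-update} in terms of $T=\Xi_w(E)$ and $T'=\Xi_w(E')$, and your check that the affected threads keep the same north-to-south positions (so they stay assigned to $(r_0,c),\ldots,(r_M,c)$) is exactly the bookkeeping the paper leaves implicit.
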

	
	Next, we study the possibilities for a first-time failure of the SVRT row condition. Assume $T$ satisfies the row condition~(\ref{eq:row-cond}) and $T'$ does not. By Lemma~\ref{lem:local-update} and its preceding paragraph, $T$ and $T'$ differ only in the boxes $(r_j,c)$ for $j=0,\ldots,M$. Comparing their differing entries in column $c$ shows that every violation of (\ref{eq:row-cond}) in $T'$ occurs between column $c$ and a column $c'>c$. The next lemma documents the local situation in more detail. See Figure~\ref{fig:situation1} for a visual aid.
	
	\begin{figure}[ht]
		\centering
		\includegraphics[scale=.9]{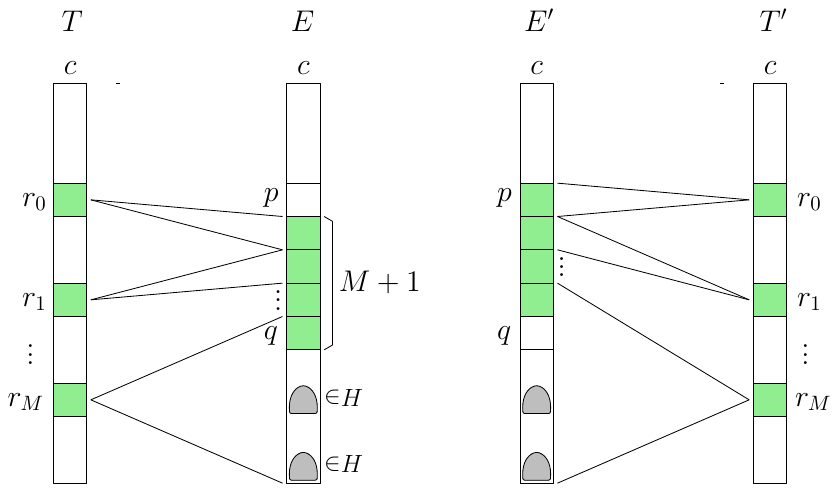}
		\caption{The situation of Lemma~\ref{lem:critical-equality}.}
		\label{fig:situation1}
	\end{figure}
	
	\begin{lemma}
		\label{lem:critical-equality}
		Assume Setup~\ref{set:1432_setup}.
		Suppose $T$ satisfies all row inequalities \textup{(\ref{eq:row-cond})} and $T'$ does not. Then there exists an index $k$ with $0\leq k<M$, and a column index $d>c$ such that $(r_k,d)\in D(w)$ satisfying:
		\begin{itemize}
			\item $\max \, T(r_k,d)=p+1+k$;
			\item $T(r_k,c)=\{p+1+k\}$; and
			\item $T'(r_k,c)=\{p+k\}$.
		\end{itemize}
	\end{lemma}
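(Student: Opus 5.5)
Since $T$ and $T'$ differ only in the cells $(r_0,c),\ldots,(r_M,c)$ of column $c$ (Lemma~\ref{lem:local-update}), any violation of \eqref{eq:row-cond} in $T'$ involves one of these cells, say $(r_j,c)$, and some other cell $(r_j,d)\in D(w)$ in the same row. I will rule out the case $d<c$ and the case $j=M$, and then identify the equality in the remaining case.

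\emph{Case $d<c$.} The relevant inequality is $\min T'(r_j,d)\geq \max T'(r_j,c)$. Here $T'(r_j,d)=T(r_j,d)$. From Lemma~\ref{lem:local-update}, $\max T'(r_j,c)\leq \max T(r_j,c)$ for every $j$: for $j<M$ the entry drops from $p+1+j$ to $p+j$, and for $j=M$ the maximum is either unchanged (when $H\neq\varnothing$) or at most $q$. So this inequality holds in $T'$ because it held in $T$. Hence every violation has $d>c$, and the failing inequality is
\[\min T'(r_j,c)\geq \max T(r_j,d).\]

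\emph{Case $j=M$.} Here $\min T'(r_M,c)=q-1$ and $\min T(r_M,c)=q$, so a violation would require $\max T(r_M,d)=q$. I need to show that this cannot happen, using the legality of the move. Since the moved box sits at $(q,c)$ and is rightmost in row $q$ of $E$, no occupied cell $(q,d)$ with $d>c$ exists in $E$. On the other hand, $q\in T(r_M,d)=\Xi_w(E)(r_M,d)$ would mean that $(q,d)$ is occupied in $E$, since the thread assigned to $(r_M,d)$ occupies exactly the rows in $T(r_M,d)$. This is a contradiction, so $j=M$ is impossible.

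\emph{Case $j=k<M$.} Now $T(r_k,c)=\{p+1+k\}$ and $T'(r_k,c)=\{p+k\}$, which are the second and third bullets. The row condition in $T$ gives $\max T(r_k,d)\leq p+1+k$, while failure in $T'$ gives $\max T(r_k,d)>p+k$. Together these force $\max T(r_k,d)=p+1+k$, which is the first bullet.

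The main obstacle is the $j=M$ case. It depends on translating the "rightmost occupied cell" hypothesis into a statement about the filling, using that $\Xi_w$ records exactly the occupied rows of each thread. A secondary subtlety is checking that $\max T'(r_M,c)\leq\max T(r_M,c)$ in the \K-Kohnert case when $H=\varnothing$: there $T'(r_M,c)=\{q-1,q\}$ has maximum $q$, and $T(r_M,c)=\{q\}$ also has maximum $q$, so the inequality holds.
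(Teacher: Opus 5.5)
Your proposal is correct and follows essentially the same route as the paper. You confine violations to columns $d>c$ because the column-$c$ maxima only weakly decrease. You then pin down $\max T(r_k,d)=\min T(r_k,c)$ by combining the row condition in $T$ with the failure in $T'$, and you exclude $k=M$ because $q\in T(r_M,d)$ would place an occupied cell at $(q,d)$ to the right of the moved box. The only difference is that you write out the $d<c$ case explicitly, including the \K-Kohnert case with $H=\varnothing$, whereas the paper handles it in the paragraph just before the lemma.
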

	
	\begin{proof}
		By Lemma~\ref{lem:local-update}, a new violation in $T'$ must have the form
		\[\max \, T(r_k,d) = \max \, T'(r_k,d) = \min \, T'(r_k,c) + 1.\]
		Some violation exists by assumption, and must occur in one of the rows $r_0,\ldots,r_M$. Suppose the violation occurs in row $r_k$. It remains to show that $k\neq M$. Assume for a contradiction that $k=M$.
		
		Recall $M=q-p-1$, so
		\[\min \, T'(r_M,c)=q-1 \quad\mbox{and}\quad \max \, T(r_M,d) = \max \, T'(r_M,d)=q.\]		
		But $E\longrightarrow E'$ via a move, so the box in row $q$ is rightmost in $E$. Then for any $d>c$, it follows that $q$ does not appear in column $d$ of $T$ (or therefore $T'$). This would contradict $\max \, T(r_k,d)=q$.
	\end{proof}
	
	\begin{remark}
		\label{rmk:simple_2}
		Consider the special case of the previous lemma when $q=p+1$. In this case, $M=q-p-1 = 0$, so the lemma has $0\leq k<0$. Consequently, the hypotheses of the lemma cannot be met. Hence $T'$ satisfies the row condition~(\ref{eq:row-cond}). We utilize this fact in Section~\ref{sec:simple}, where we study these \emph{simple} moves. See the proof of Lemma~\ref{lem:simple_xi_codomain}.
	\end{remark}
	
	\begin{lemma}
		Assume Setup~\ref{set:1432_setup}.
		If $T$ satisfies all row inequalities \textup{(\ref{eq:row-cond})} and $T'$ does not, then there exists a row index
		$a<r_0$ such that $w(a)<c$.
	\end{lemma}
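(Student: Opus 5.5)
The plan is a counting argument on column $c$. Flag-type bounds will force $r_0\le p$, while Lemma~\ref{lem:one-column} forces $r_0\ge p+1$, and the contradiction produces the row $a$. Suppose, for contradiction, that every row $a<r_0$ satisfies $w(a)\geq c$. First I will rule out $w(a)=c$. Since $(r_0,c)\in D(w)$, the definition of the Rothe diagram gives $r_0<w^{-1}(c)$, so no $a<r_0$ has $w(a)=c$. Hence $w(a)>c$ for all $a<r_0$. Each such $a$ also satisfies $a<r_0<w^{-1}(c)$, so $(a,c)\in D(w)$. Thus column $c$ of $D(w)$ contains every cell $(1,c),\ldots,(r_0-1,c)$, and $(r_0,c)$ is the $r_0$-th Rothe cell of column $c$ from the top.

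Next I will locate the threads of column $c$ of $E$. By Corollary~\ref{cor:column-threading} and the thread-compatibility in Lemma~\ref{lem:column-word-update}, $E$ is thread-compatible with $w$. Threads are assigned north to south, so the $r_0-1$ Rothe cells $(1,c),\ldots,(r_0-1,c)$ receive the $r_0-1$ threads lying strictly above the thread assigned to $(r_0,c)$. By Lemma~\ref{lem:local-update}, that thread has its leading box in row $p+1$; this holds whether $M>0$ or $M=0$, since in the latter case it is the thread with leading box in row $q=p+1$. The move $E\to E'$ lands the box in row $p$, so by the definition of $i'$ the cell $(p,c)$ is empty in $E$. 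A thread is a box followed by the ghosts contiguous below it in the column word, and the empty cell $(p,c)$ lies between the earlier threads and the box at row $p+1$. Hence all $r_0-1$ earlier threads occupy rows in $\{1,\ldots,p-1\}$.

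The earlier threads are nonempty and occupy pairwise disjoint row sets, so $r_0-1\leq p-1$, that is, $r_0\leq p$. On the other hand, Lemma~\ref{lem:one-column} (equivalently, the flag condition from Corollary~\ref{cor:column-threading}) says the thread assigned to $(r_0,c)$ has all its rows at most $r_0$. That thread contains row $p+1$, so $p+1\leq r_0$. These two inequalities contradict each other, so some $a<r_0$ has $w(a)<c$.

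Most steps are bookkeeping. The one requiring care is the claim that all earlier threads lie strictly above row $p$. This uses the emptiness of $(p,c)$ in $E$ together with the ordering of threads, and I expect it to be the main point needing a careful check. Notably, the argument does not appear to need the failure of the row condition beyond the setup of Lemma~\ref{lem:local-update}; I would double-check this. In the actual write-up I would still invoke Lemma~\ref{lem:critical-equality} to record that $M\geq 1$, for use in later lemmas.
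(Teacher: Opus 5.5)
Your proof is correct and is essentially the paper's argument. Both show that column $c$ of $D(w)$ is full above $(r_0,c)$, so $r_0-1$ nonempty disjoint threads lie above the thread with leading box in row $p+1$, and both combine this with the flag bound $p+1\le r_0$ and the emptiness of $(p,c)$ in $E$. The only difference is order: you use the empty cell $(p,c)$ immediately to get $r_0\le p$, while the paper first derives $r_0=p+1$ and then notes this would force $T(r_0-1,c)=\{p\}$. You are also right that the failure of the row condition is never used, and the same holds for the paper's proof.
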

	
	\begin{proof}
		Assume no such $a$ exists. For every $a<r_0$, the assumption gives $w(a)\geq c$. Since $(r_0,c)\in D(w)$, necessarily $w^{-1}(c)>r_0$. But note equality $w(a)=c$ is impossible since $w^{-1}(c)>r_0$. Hence $(a,c)\in D(w)$ for every $a<r_0$. Thus column $c$ of $D(w)$ has cells in all rows $1,2,\ldots,r_0-1$ above $(r_0,c)$.
		
		By Corollary~\ref{cor:column-threading}, the $r_0-1$ threads assigned to these higher cells are strictly above the thread assigned to $(r_0,c)$. In $T$, the thread assigned to $(r_0,c)$ has minimum $p+1$. Therefore those $r_0-1$
		higher threads each contain at least one distinct cell from the index set $\{1,2,\ldots,p\}$. Then $r_0-1\leq p$, i.e.~$r_0\leq p+1$. Conversely, the flag condition on $(r_0,c)$ gives $p+1\leq r_0$. Consequently $r_0=p+1$.
		
		But then those $p=r_0-1$ higher threads occupy every box $(1,c),\ldots,(r_0-1,c)$, with $T(i,\,c)=\{i\}$ for $1\leq i\leq p$. In particular, $T(r_0-1,c)=\{p\}$. However, recall from Setup~\ref{set:1432_setup} that the move giving $E\longrightarrow E'$ shifted a box in column $c$ from row $q$ to row $p$. In particular $(p,c)\notin E$. This contradicts $T(r_0-1,c)=\{p\}$.
	\end{proof}
	
	We now show the minimal failure of (\ref{eq:row-cond}) in $T'$ locally induces a 1432-pattern in $w$. We recall the Rothe diagram membership criterion
	\begin{align*}
		(i,j)\in D(w)\quad\Longleftrightarrow\quad
		i<w^{-1}(j)\text{ and }w(i)>j.
	\end{align*}
	Set $t=w^{-1}(c)$, the row index of the dot in column $c$ of $D(w)$. Then
	\[(r_0,c),\ldots,(r_M,c)\in D(w)\quad\mbox{implies}\quad t>r_M>\cdots>r_0.\]
	
	\begin{lemma}
		\label{lem:crazy_lemma}
		Assume Setup~\ref{set:1432_setup}.
		Suppose $T$ satisfies all row inequalities \textup{(\ref{eq:row-cond})} and $T'$ does not. Let $k$ and $d$ respectively be the indices guaranteed by Lemma \textup{\ref{lem:critical-equality}}. Then there exists a row index $s$ such that
		\[r_k<s<t=w^{-1}(c)\quad\mbox{and}\quad c<w(s)\leq d.\]
	\end{lemma}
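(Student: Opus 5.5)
The plan is a proof by contradiction. Assume no row $s$ with $r_k<s<t$ satisfies $c<w(s)\leq d$. From this I will show that column $d$ of $D(w)$ contains the cells $(r_j,d)$ for every $k<j\leq M$. Then the row and column conditions on $T$ will force the entry $q$ to appear in column $d$ of $T$, which contradicts the fact that the box $(q,c)$ is rightmost in row $q$ of $E$.

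\emph{Step 1 (locating the dot of column $d$).} Since $(r_k,d)\in D(w)$, we have $w^{-1}(d)>r_k$. If $w^{-1}(d)<t$, then $s=w^{-1}(d)$ already works, because $w(s)=d>c$. Also $w^{-1}(d)\neq t$, since $w(t)=c\neq d$. So under the contradiction hypothesis we get $w^{-1}(d)>t$.

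\emph{Step 2 (forcing Rothe cells in column $d$).} Fix $j$ with $k<j\leq M$. Then $r_k<r_j<t$, and $(r_j,c)\in D(w)$ gives $w(r_j)>c$. The contradiction hypothesis then forces $w(r_j)>d$. Combined with $r_j<t<w^{-1}(d)$, the membership criterion gives $(r_j,d)\in D(w)$.

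\emph{Step 3 (propagating entries down column $d$).} Recall from Lemma~\ref{lem:local-update} that $T(r_j,c)=\{p+1+j\}$ for $j<M$ and $\min T(r_M,c)=q$. I will show by induction on $j$ that $T(r_j,d)=\{p+1+j\}$ for $k\leq j<M$. The base case $j=k$ is the first bullet of Lemma~\ref{lem:critical-equality}: there $\max T(r_k,d)=p+1+k$, and the row condition gives $\min T(r_k,d)\geq \max T(r_k,c)=p+1+k$, so $T(r_k,d)=\{p+1+k\}$. For the inductive step, take $j<M$ with $j>k$. The column condition in column $d$ gives $\min T(r_j,d)>p+j$, since the cells $(r_{j-1},d)$ and $(r_j,d)$ both lie in column $d$ of $D(w)$, with possibly other cells between them whose entries only strengthen the inequality. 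The row condition in row $r_j$ of $T$ gives $\max T(r_j,d)\leq p+1+j$. Together these give $T(r_j,d)=\{p+1+j\}$.

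At $j=M$, the same two conditions give $q-1=p+M<\min T(r_M,d)$ and $\max T(r_M,d)\leq \min T(r_M,c)=q$. Hence $q\in T(r_M,d)$. This holds also when $M=k+1$, where the chain of Step 3 is empty beyond the base case.

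\emph{Step 4 (contradiction).} Since $q\in T(r_M,d)$, the cell $(q,d)$ is occupied in $E$, with $d>c$. But the move $E\to E'$ shifts the box $(q,c)$, so that box must be rightmost in row $q$ of $E$. This is the contradiction.

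The main obstacle is Step 2, turning the negated conclusion into Rothe cells in column $d$. Getting the inequalities $r_k<r_j<t<w^{-1}(d)$ right there is the delicate point. Step 3 is a routine squeeze between the row and column conditions.
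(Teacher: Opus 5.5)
Your proof is the paper's argument essentially step for step: show $w^{-1}(d)>t$, then $(r_j,d)\in D(w)$ for $k<j\le M$, then squeeze entries down column $d$ to force $q\in T(r_M,d)$, contradicting that $(q,c)$ is rightmost in row $q$ of $E$. One slip to fix is in the base case of Step 3, where you reversed the row condition: for $c<d$ it reads $\max T(r_k,d)\le \min T(r_k,c)$, not $\min T(r_k,d)\ge \max T(r_k,c)$, so your claim $T(r_k,d)=\{p+1+k\}$ is unjustified. Your induction only ever uses $\max T(r_{j-1},d)=p+j$, and at $j-1=k$ this is exactly the first bullet of Lemma~\ref{lem:critical-equality}, so just drop the singleton claim there, as the paper does.
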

	\begin{proof}
		We proceed by contradiction. Suppose no such $s$ exists. 
		
		\medskip
		
		\noindent \textbf{Step 1:} We first show that
		\begin{equation}
			\label{eq:no-sep-dot-order}
			w^{-1}(d)>t.
		\end{equation}
		Because $(r_k,d)\in D(w)$, it follows that $w^{-1}(d)>r_k$. Note the definition $t=w^{-1}(c)$ forces $w^{-1}(d)\neq t$. If it were the case that $w^{-1}(d)<t$, then $s=w^{-1}(d)$ would satisfy
		\[r_k<s<t \quad\mbox{and}\quad c<w(s)=d.\]
		This would contradict the assumed nonexistence of such an $s$. Hence (\ref{eq:no-sep-dot-order}) holds.
		
		\medskip
		
		\noindent \textbf{Step 2:} Next, we prove
		\[
			(r_j,d)\in D(w) \quad\text{for all $j$ with }k\leq j\leq M.
		\]
		For $j=k$, this is part of Lemma~\ref{lem:critical-equality}. Suppose $j>k$, so $r_k<r_j$. Since $(r_j,c)\in D(w)$, it follows that $r_j<t$ and $w(r_j)>c$. If it were the case that $c<w(r_j)\leq d$, then $s=r_j$ would satisfy
		\[r_k<s<t\quad\mbox{and}\quad	c<w(s)\leq d,\]
		contradicting the assumed nonexistence of such an $s$. Hence $w(r_j)>d$. 
		
		Recall that $t>r_j$, so by (\ref{eq:no-sep-dot-order}) one has $w^{-1}(d)>t>r_j$. But $w(r_j)>d$ and $r_j<w^{-1}(d)$ imply $(r_j,d)\in D(w)$. 
		
		\medskip
		
		\noindent \textbf{Step 3:} We next verify that 
		\begin{equation}
			\label{eq:no-sep-induction}
			\max \, T(r_k,d)=p+1+k,
			\quad\mbox{and}\quad		
			T(r_j,d) = \{p+1+j\} \quad\mbox{for } k<j\leq M.
		\end{equation}
		The case $j=k$ is covered by Lemma~\ref{lem:critical-equality}. We handle the values $k<j\leq M$ by finite induction.
		
		Assume $j>k$ and the assertion is known for $j-1$. Since $r_{j-1}<r_j$ and both cells $(r_{j-1},d),(r_j,d)\in D(w)$,
		the SVRT column-strictness condition (\ref{eq:col-cond}) applied to column $d$ of $T$ gives
		\[p+j=\max \, T(r_{j-1},d)<\min \, T(r_j,d).\]
		Conversely, the row condition~(\ref{eq:row-cond}) applied to row $r_j$ of $T$
		gives
		\[\max \, T(r_j,d)\leq \min \, T(r_j,c).\]
		But recall that by Lemma~\ref{lem:local-update},
		\[
			T(r_j,c) = \{p+1+j\} \mbox{ for } j<M,\quad\mbox{and}\quad
			\min \, T(r_M,c) = \min(\{q\}\cup H)=q=p+1+M.
		\]		
		Putting this all together, 
		\[p+j < \min \, T(r_j,d) \leq \max \, T(r_j,d)\leq p+1+j.\]
		By integrality then, $T(r_j,d) = \{p+1+j\}$ follows. 
		
		\medskip
		
		\noindent \textbf{Step 4:} We deduce a final contradiction to the original assumption that there does not exist an index $s$ satisfying the conclusion of the lemma. Since $M=q-p-1$, applying (\ref{eq:no-sep-induction}) with
		$j=M$ gives
		\[
		\max \, T(r_M,d)=p+1+M = q.
		\]
		But $q\in T(r_M,d)$ implies $E$ contains an occupied cell at $(q,d)$. However, recall the move $E\longrightarrow E'$ shifts a box at $(q,c)$ to $(p,c)$. Since $d>c$, an occupied cell in $(q,d)\in E$ would imply $(q,c)\in E$ was not rightmost, a contradiction. This contradiction proves the lemma.
	\end{proof}
	
	\begin{proposition}
		\label{prop:threading}
		If $w\in S_n$ is $1432$-avoiding, then $\Xi_w\left(\KKoh(w)\right)\subseteq \SVRT(w)$. 
	\end{proposition}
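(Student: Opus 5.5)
The proof will be by induction along a move sequence, using the lemmas of this subsection. Every $E\in\KKoh(w)$ is reached from $(D(w),\varnothing)$ by a finite sequence of ordinary and \K-Kohnert moves. By Corollary~\ref{cor:column-threading}, every diagram along the way has $\Xi_w$ defined, and $\Xi_w$ of it satisfies the column condition \eqref{eq:col-cond} and the flag condition \eqref{eq:flag-cond}. So only the row condition \eqref{eq:row-cond} needs checking. Since $\Xi_w(D(w))(i,j)=\{i\}$ satisfies \eqref{eq:row-cond}, if some $E$ violates it there is a first move $E\to E'$ in the sequence with $T=\Xi_w(E)$ satisfying \eqref{eq:row-cond} and $T'=\Xi_w(E')$ not. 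We are then in Setup~\ref{set:1432_setup} with exactly the hypotheses of Lemmas~\ref{lem:critical-equality}, the unnamed lemma producing $a$, and~\ref{lem:crazy_lemma}. The goal is to extract a $1432$-pattern from these, contradicting avoidance.

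We collect the data. Lemma~\ref{lem:critical-equality} gives $k<M$ and $d>c$ with $(r_k,d)\in D(w)$. The preceding lemma gives a row $a<r_0\le r_k$ with $w(a)<c$. Lemma~\ref{lem:crazy_lemma} gives $s$ with $r_k<s<t=w^{-1}(c)$ and $c<w(s)\le d$. Since $(r_k,d)\in D(w)$, we have $w(r_k)>d$.

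Now consider the four positions $a<r_k<s<t$ with values $w(a),\ w(r_k),\ w(s),\ w(t)=c$. We have
\[
w(a)<c<w(s)\le d<w(r_k).
\]
If $w(s)<d$ this is immediate. If $w(s)=d$, it still holds because $w(r_k)>d$ forces $w(s)\neq w(r_k)$ and $w(s)=d<w(r_k)$. So the relative order of these values is $1,4,3,2$: $w(a)$ is smallest, $w(r_k)$ largest, and $w(s)>w(t)$. This is a $1432$-pattern in $w$, which is the desired contradiction. Hence no first failure exists, and $\Xi_w(E)\in\SVRT(w)$ for all $E\in\KKoh(w)$.

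The main obstacle is already absorbed in Lemma~\ref{lem:crazy_lemma}, so what remains is bookkeeping. The delicate points are:
\begin{itemize}
\item ensuring the inequalities are strict where needed: $a<r_0\le r_k$ and $r_k<s<t$ for the positions, and $w(s)\le d<w(r_k)$ for the values;
\item confirming that the inductive framing is legitimate, namely that Corollary~\ref{cor:column-threading} supplies the column and flag conditions at every stage, so the first failure can only be a row-condition failure.
\end{itemize}
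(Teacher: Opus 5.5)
Your proposal is correct and is essentially the paper's own argument. You take the first move in a move sequence at which the row condition fails, with the column and flag conditions supplied by Corollary~\ref{cor:column-threading}, and then combine Lemma~\ref{lem:critical-equality}, the lemma producing $a<r_0$ with $w(a)<c$, and Lemma~\ref{lem:crazy_lemma} to obtain $a<r_k<s<t$ with $w(a)<w(t)<w(s)<w(r_k)$, which is a $1432$-pattern.
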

	
	\begin{proof}
		We work by contradiction. Fix $F\in\KKoh(w)$, and suppose that $\Xi_w(F)\notin \SVRT(w)$. By Corollary~\ref{cor:column-threading}, the noninclusion can only occur due to $\Xi_w(F)$ violating the row condition~(\ref{eq:row-cond}). Choose a sequence of ordinary/\K-Kohnert moves
		\[
		(D(w),\varnothing)=F_0\longrightarrow F_1\longrightarrow\cdots\longrightarrow F_N=F.
		\]
		Let $m$ be the first index for which $\Xi_w(F_m)$ violates the row condition. Set $E=F_{m-1}$ and $E'=F_{m}$. Suppose the move $E\longrightarrow E'$ shifts a box in column $c$ of $E$ from row $q$ to row $p<q$. Let $T=\Xi_w(E)$ and $T'=\Xi_w(E')$.
		
		This choice of notation puts us exactly in the setting of Setup~\ref{set:1432_setup}, and meets the assumptions of Lemmas~\ref{lem:local-update}--\ref{lem:crazy_lemma}.
		
		Applying these lemmas one-by-one provides indices $a,r_k,s,t$ satisfying
		\begin{itemize}
			\item $t=w^{-1}(c)$,
			\item $a<r_k$,
			\item $w(a)<c$,
			\item $r_k<s<t$, and
			\item $c<w(s)\leq d$.
		\end{itemize}
		Additionally, $(r_k,d)\in D(w)$ implies $w(r_k)>d$. Putting everything together:
		\[
			a<r_k<s<t\quad\mbox{and}\quad w(a)<c=w(t)<w(s)<w(r_k).
		\]
		Hence $w(a)w(r_k)w(s)w(t)$ forms a 1432-pattern in $w$. This contradiction proves the proposition.
	\end{proof}
	
	\subsection{The Bijection}\phantom{}\newline\vspace{-2ex}
	
	\begin{theorem}
		\label{thm:bijection}
		If $w$ is $1432$-avoiding, then $\Theta_w:\SVRT(w)\to \KKoh(w)$ is a weight-preserving bijection with inverse $\Xi_w$.
	\end{theorem}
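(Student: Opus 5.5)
We combine the two inclusions already established with a direct check that $\Theta_w$ and $\Xi_w$ are mutually inverse. By Proposition~\ref{prop:theta-in-kkoh}, $\Theta_w$ maps $\SVRT(w)$ into $\KKoh(w)$ for every $w$. By Proposition~\ref{prop:threading}, when $w$ is $1432$-avoiding, $\Xi_w$ maps $\KKoh(w)$ into $\SVRT(w)$; Corollary~\ref{cor:column-threading} guarantees that $\Xi_w$ is defined on all of $\KKoh(w)$. Weight preservation of $\Theta_w$ is \eqref{eq:theta-weight}. It therefore suffices to show $\Xi_w\circ\Theta_w=\mathrm{id}_{\SVRT(w)}$ and $\Theta_w\circ\Xi_w=\mathrm{id}_{\KKoh(w)}$. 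Everything here is column-by-column bookkeeping.

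For $\Xi_w\circ\Theta_w$, fix $T\in\SVRT(w)$ and a column $c$ with Rothe cells $(i_1,c),\ldots,(i_m,c)$, $i_1<\cdots<i_m$. By the column condition \eqref{eq:col-cond}, the sets $T(i_1,c),\ldots,T(i_m,c)$ occupy successive disjoint intervals of row indices, each entirely above the next. In $\Theta_w(T)$, the set $T(i_k,c)$ contributes a box at its minimum and ghosts at its remaining elements. Reading column $c$ from north to south therefore gives the word $(\blacksquare\,\boxtimes^{\#T(i_1,c)-1})\cdots(\blacksquare\,\boxtimes^{\#T(i_m,c)-1})$. This column is threaded, it has exactly $m$ threads, and the $k$-th thread has row set exactly $T(i_k,c)$. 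Hence $\Theta_w(T)$ is thread-compatible with $w$, and since $\Xi_w$ assigns threads to cells from north to south, $\Xi_w(\Theta_w(T))=T$.

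For $\Theta_w\circ\Xi_w$, fix $E\in\KKoh(w)$, which is thread-compatible with $w$ by Lemma~\ref{lem:column-word-update}. Each thread's row set $S$ is assigned to one Rothe cell. The minimum of $S$ is the row of the thread's leading box, and the other elements of $S$ are the rows of its ghosts. Applying $\Theta_w$ places a box at $\min S$ and ghosts at the rest of $S$, which reproduces the thread exactly. Since the threads partition the occupied cells of each column, $\Theta_w(\Xi_w(E))=E$. Injectivity of $\Theta_w$ also follows from the first composition, so $\Theta_w$ is a bijection with inverse $\Xi_w$.

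The only real obstacle was the row condition, and that is already handled by Proposition~\ref{prop:threading}. What remains is to make sure the north-to-south ordering of threads matches the north-to-south ordering of Rothe cells, which the column-strictness argument above provides.
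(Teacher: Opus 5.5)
Your proposal is correct and matches the paper's proof: you take the codomains from Propositions~\ref{prop:theta-in-kkoh} and~\ref{prop:threading}, weight preservation from \eqref{eq:theta-weight}, and check both compositions column by column through threads. You also use the column condition \eqref{eq:col-cond} to show that each $T(r,c)$ becomes exactly one thread, so $\Theta_w(T)$ is thread-compatible and $\Xi_w$ is defined on it; the paper leaves this detail implicit.
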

	
	\begin{proof}
		That both maps are well-defined and have the appropriate codomain follows from Proposition~\ref{prop:theta-in-kkoh} and Proposition~\ref{prop:threading}. It was noted in (\ref{eq:theta-weight}) that $\Theta_w$ is weight-preserving. 
		
		That the two maps are inverses of each other follows easily from their columnwise definitions. 
		Starting with $T\in\SVRT(w)$, the entries of each set $T(r,c)=\{a_1<\cdots<a_m\}$ become exactly one thread in $\Theta_w(T)$ with row set $\{a_1,\ldots,a_m\}$. Hence $\Xi_w(\Theta_w(T))=T$. 
		
		Conversely, starting with $E\in\KKoh(w)$ and applying $\Xi_w$ records each thread as the filling of a box in $D(w)$. Applying $\Theta_w$ will then reconstruct the same leading box and the same following ghosts in each column, so $\Theta_w(\Xi_w(E))=E$.
	\end{proof}
	
	\begin{figure}[ht]
		\includegraphics[scale=.9]{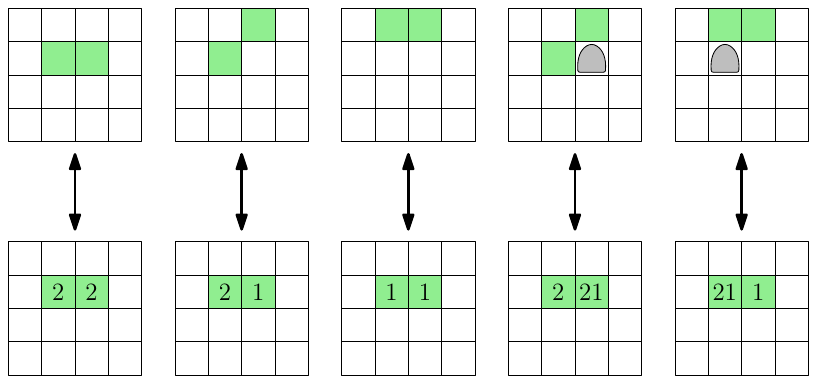}
		\caption{The bijection between \K-Kohnert diagrams and set-valued Rothe tableaux of $w=1423$.}
		\label{fig:1423_bijection}
	\end{figure}
	
	\begin{example}
		For the (1432-avoiding) permutation $w=1423$, the bijection $\KKoh(w)\leftrightarrow \SVRT(w)$ is shown in Figure~\ref{fig:1423_bijection}.
	\end{example}
	
	\begin{corollary}
		\label{cor:gkk_kk_equiv_1432}
		If $w\in S_n$ is 1432-avoiding, then $\KKoh(w)=\GKKoh(w)$. 
	\end{corollary}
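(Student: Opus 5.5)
The inclusion $\KKoh(w)\subseteq\GKKoh(w)$ holds by definition, since $\GKKoh(w)$ is closed under a larger set of moves. For the reverse inclusion, I would avoid showing directly that ghost moves can be simulated. Instead I would extend the machinery of the hard inclusion to all of $\GKKoh(w)$. Concretely, the goal is to show that every $E\in\GKKoh(w)$ is thread-compatible with $w$ and that $\Xi_w(E)\in\SVRT(w)$. Then Theorem~\ref{thm:bijection} gives $E=\Theta_w(\Xi_w(E))\in\KKoh(w)$; the identity $\Theta_w\circ\Xi_w=\mathrm{id}$ is purely columnwise, so it holds for any thread-compatible $E$.

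I would argue by induction along a move sequence $(D(w),\varnothing)=F_0\to\cdots\to F_N$ in $\GKKoh(w)$, keeping the invariant that $F_m\in\KKoh(w)$. Suppose $F_{m-1}\in\KKoh(w)$. If the move $F_{m-1}\to F_m$ is ordinary or \K-Kohnert, there is nothing to prove. Otherwise it is a ghost or ghost \K-Kohnert move on a ghost $(i,j)$ that is rightmost in row $i$. Let $i'$ be the first empty row above $i$; every occupied cell strictly between $i'$ and $i$ is a ghost. Since the top cell of each column is a box (threadedness of $F_{m-1}$), there is a box somewhere above $i'$. So all cells $(i',j),\dots,(i,j)$ lie in the single thread led by that box, whose rows are assigned to some Rothe cell $(r,j)$ with set $S$.

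A ghost Kohnert move replaces $i$ by $i'$ in $S$; a ghost \K-move adds $i'$ to $S$. In either case the thread structure of column $j$ is otherwise unchanged, so the number of threads is preserved. The minimum of $S$ is unchanged, and the maximum either stays the same or decreases. Hence the column condition \eqref{eq:col-cond} and the flag condition \eqref{eq:flag-cond} persist.

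For the row condition \eqref{eq:row-cond}, only row $r$ of the tableau can be affected, and only through $\max S$ (when $\max S=i$ is replaced by $i'$, it decreases) or $\min S$ (which is unchanged). Decreasing a maximum can only help the inequality $\min T(r,c)\ge\max T(r,j)$ for $c<j$. The comparison with columns $d>j$ uses $\min S$, which is unchanged. So $\Xi_w(F_m)\in\SVRT(w)$. By Theorem~\ref{thm:bijection}, $\Theta_w(\Xi_w(F_m))\in\KKoh(w)$, and this equals $F_m$.

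The main obstacle is checking carefully that the thread decomposition behaves as claimed. In particular, moving a ghost up into row $i'$ must not create a new thread or split an existing one. This holds because no box lies in rows $i'+1,\dots,i$ of column $j$, and the box leading the thread lies above $i'$. I would also confirm that $i'$ stays below that leading box, so that $\min S$ is genuinely unchanged.
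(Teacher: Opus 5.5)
Your proposal is correct and follows essentially the same route as the paper. You assume the pre-move diagram lies in $\KKoh(w)$, so its $\Xi_w$-image is in $\SVRT(w)$ by Proposition~\ref{prop:threading}. You then note that a ghost or ghost \K-Kohnert move changes only the row set $S$ of one thread: $\min S$ stays fixed and $\max S$ weakly decreases, so all three SVRT conditions survive and $\Theta_w$ returns the new tableau to the new diagram, which therefore lies in $\KKoh(w)$.
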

	
	\begin{proof}
		By definition, $\KKoh(w)\subseteq \GKKoh(w)$. To verify the other inclusion, it is enough to show that if $E\in \KKoh(w)$ and $E\longrightarrow E'$ via a ghost Kohnert or ghost \K-Kohnert move, then $E'\in \KKoh(w)$.

		Suppose $E=(B,G)\in \KKoh(w)$ and $E'=(B',G')$ is a grave diagram with $E\longrightarrow E'$ via a ghost Kohnert or ghost \K-Kohnert move shifting a ghost in column $c$ from row $q$ to row $p<q$. It is immediate that column $c$ of $E'$ has the same number of threads as column $c$ of $E$, with each thread having the same leading box in the same row. Consequently, thread compatibility of $E'$ with $w$ follows from that of $E$ (which itself comes from Lemma~\ref{lem:column-word-update}). Let $T' = \Xi_w(E')$ be the corresponding set-valued filling of $D(w)$. Set $T=\Xi_w(E)\in \SVRT(w)$. We verify that $T'\in\SVRT(w)$ as well. 
		
		Suppose $(p_0,c)\in B$ leads the thread containing $(p+1,c),(p+2,c),\ldots,(q,c)\in G$.
		Let $(r,c)\in D(w)$ be the Rothe cell assigned to this affected thread.
		Set
		\begin{align*}
			H_0&=\{t\mid p_0<t<p \text{ and } (t,c)\in G\},\\
			H_1&=\{t\mid q<t,\ (t,c)\in G,\text{ and }(t,c) \text{ lies in the same thread of }E\text{ as }(p_0,c)\}.
		\end{align*}
		We provide an illustration of the situation in Figure~\ref{fig:thread_change_under_single_ghost_move}. 
		
		\begin{figure}[ht]
			\centering
			\includegraphics[scale=.85]{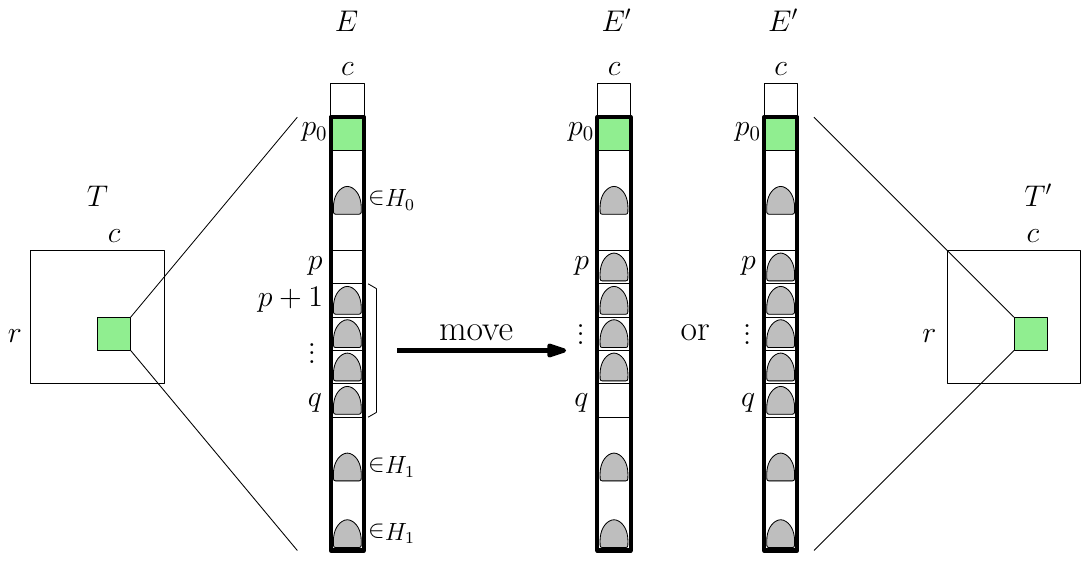}
			\caption{The situation in the proof of Corollary~\ref{cor:gkk_kk_equiv_1432}.}
			\label{fig:thread_change_under_single_ghost_move}
		\end{figure}
		
		Let 
		\[S=\{p_0\}\cup H_0\cup \{p+1,p+2,\ldots,q\}\cup H_1.\]
		Suppose $(r,c)\in D(w)$ is the Rothe cell with $T(r,c)=S$. Then
		\[
			T'(r,c) = 
			\begin{cases}
				(S\setminus \{q\}) \cup \{p\} &\mbox{for $E\longrightarrow E'$ via a ghost Kohnert move,}\\
				S \cup \{p\}&\mbox{for $E\longrightarrow E'$ via a ghost \K-Kohnert move.}
			\end{cases}
		\]
		It is immediate that
		\[\min\, T'(r,c) = p_0 = \min\, T(r,c) \quad\mbox{and}\quad \max\,T'(r,c)\leq \max\,T(r,c).\]
		Note $T$ and $T'$ agree on all other Rothe cells. The column condition (\ref{eq:col-cond}), flag condition (\ref{eq:flag-cond}), and row condition~(\ref{eq:row-cond}) for $T'$ follow from those for $T$. Consequently, $T'\in \SVRT(w)$, so $E'=\Theta_w(T')\in \KKoh(w)$.	
	\end{proof}
	
	Notice that the previous argument did not require the rightmostness of the source ghost in a ghost move. Consequently, it proves a slightly stronger statement analogous to \cite[Remark~5.9]{ghost_kohnert_fix}.
	
	\begin{corollary}
		\label{cor:1432-ry}
		Conjecture~\ref{conj:ry_kohnert} and Conjecture~\ref{conj:cr_kohnert} hold when $w$ is 1432-avoiding.
	\end{corollary}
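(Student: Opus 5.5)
The plan is to combine the three ingredients already in place: the Fan--Guo formula (Theorem~\ref{thm:fg_formula}), the bijection of Theorem~\ref{thm:bijection}, and the equality $\KKoh(w)=\GKKoh(w)$ of Corollary~\ref{cor:gkk_kk_equiv_1432}. Fix a $1432$-avoiding $w\in S_n$ and $\alpha\in\Z_{\geq 0}^n$. Both conjectures assert an equality of counts, $\#\KKoh(w,\alpha)=\#\PD(w,\alpha)$ and $\#\GKKoh(w,\alpha)=\#\PD(w,\alpha)$. We will establish the first. The second then follows at once, since Corollary~\ref{cor:gkk_kk_equiv_1432} gives $\GKKoh(w,\alpha)=\KKoh(w,\alpha)$ as sets.

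For the first count we compare coefficients. By Theorem~\ref{thm:pdformula},
\[[x^\alpha]\mathfrak{G}_w=(-1)^{|\alpha|-\ell(w)}\#\PD(w,\alpha).\]
By Theorem~\ref{thm:fg_formula}, which applies because $w$ avoids $1432$,
\[[x^\alpha]\mathfrak{G}_w=(-1)^{|\alpha|-\ell(w)}\#\{T\in\SVRT(w)\mid \wt(T)=\alpha\}.\]
All terms of a fixed weight carry the same sign, so no cancellation occurs on either side, and therefore
\[\#\PD(w,\alpha)=\#\{T\in\SVRT(w)\mid\wt(T)=\alpha\}.\]
Theorem~\ref{thm:bijection} shows that $\Theta_w$ is a bijection $\SVRT(w)\to\KKoh(w)$, and it is weight-preserving by (\ref{eq:theta-weight}). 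It therefore restricts to a bijection between tableaux of weight $\alpha$ and $\KKoh(w,\alpha)$. Chaining the equalities gives $\#\KKoh(w,\alpha)=\#\PD(w,\alpha)$. Equivalently, the signed generating function over $\KKoh(w)$ equals $\mathfrak{G}_w$, which is the polynomial form of Conjecture~\ref{conj:ry_kohnert}.

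There is no real obstacle here, since all the substantive work sits in Proposition~\ref{prop:threading} and Corollary~\ref{cor:gkk_kk_equiv_1432}. The one point to state carefully is the sign bookkeeping. The sign depends only on $|\alpha|$, and hence is constant on each weight class in all three models. For grave diagrams, $|\wt(E)|=\#E$ counts both boxes and ghosts, and this matches $|\wt(T)|=\sum\#T(r,c)$ under $\Theta_w$. Consequently equality of signed polynomials is equivalent to equality of the counts in each weight class.
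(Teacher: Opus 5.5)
Your proposal is correct and follows the paper's proof: you combine the Fan--Guo formula (Theorem~\ref{thm:fg_formula}) with the weight-preserving bijection of Theorem~\ref{thm:bijection}, then pass to $\GKKoh(w)$ via Corollary~\ref{cor:gkk_kk_equiv_1432}. Your explicit sign bookkeeping, that the sign depends only on $|\alpha|$ so equality of coefficients is equivalent to equality of counts in each weight class, is a detail the paper leaves implicit, and you handle it correctly.
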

	
	\begin{proof}
		By Fan--Guo's formula (Theorem~\ref{thm:fg_formula}),
		\[
		\mathfrak{G}_w(x)=\sum_{T\in\SVRT(w)}(-1)^{|\wt(T)|-\ell(w)}x^{\wt(T)}.
		\]
		Apply the bijection from Theorem~\ref{thm:bijection} and the equality from Corollary~\ref{cor:gkk_kk_equiv_1432}.
	\end{proof}
	
	\FloatBarrier
	
	\section{Simple \texorpdfstring{\K}{K}-Kohnert Moves}
	\label{sec:simple}
	
	Analogously to \cite[Theorem 4.1]{gao_prin_spec}, we characterize the permutations for which $\KKoh(w)$ can be generated without boxes ever moving up more than one row at a time.
	
	
	\begin{definition}
		Call an ordinary Kohnert move or a \K-Kohnert move \newword{simple} if it shifts a box upward by exactly one row. Let $\SKoh(w)$ be the closure of $D(w)$ under simple (ordinary) Kohnert moves. Let $\SKKoh(w)$ be the closure of $(D(w),\varnothing)$ under simple ordinary and simple \K-Kohnert moves.
	\end{definition}
	
	We show that for any (not necessarily 1432-avoiding) $w\in S_n$, 
	\[
	\begin{tikzcd}
		\SVRT(w) \arrow[r, bend left, "\Theta_w"] & \SKKoh(w) \arrow[l, bend left, "\Xi_w"]
	\end{tikzcd}
	\]
	are weight-preserving bijections. This will imply $\SKKoh(w)=\KKoh(w)$ for 1432-avoiding permutations.
	
	\begin{lemma}
		\label{lem:simple_theta_codomain}
		For any $w\in S_n$ and $T\in \SVRT(w)$, one has $\Theta_w(T)\in \SKKoh(w)$.
	\end{lemma}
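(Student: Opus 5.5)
The plan is to rerun the induction from the proof of Proposition~\ref{prop:theta-in-kkoh} verbatim, with $\KKoh(w)$ replaced by $\SKKoh(w)$ throughout. As Remark~\ref{rmk:simple_1} points out, every move built in that proof shifts a box up by exactly one row, so the same argument should land in the smaller set.

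I will induct on the statistic
\[\mathcal{I}(T)=\sum_{(i,j)\in D(w)}\sum_{a\in T(i,j)}(i-a)+\sum_{(i,j)\in D(w)}\left(\#T(i,j)-1\right).\]
The proof of Proposition~\ref{prop:theta-in-kkoh} already shows that $\mathcal{I}(T)\geq 0$. In the base case $\mathcal{I}(T)=0$, the same proof gives $T(i,j)=\{i\}$ for all cells, so $\Theta_w(T)=(D(w),\varnothing)$. This lies in $\SKKoh(w)$ by definition.

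For the inductive step, take $\mathcal{I}(T)>0$ and make the same choices as before: let $r$ be the maximal row containing a cell not filled by $\{r\}$, and let $c$ be the minimal column in that row with $T(r,c)\neq\{r\}$. Set $p=\min T(r,c)$ and $q=p+1$. Lemma~\ref{lem:same-column-isolation} gives $q\leq r$. Define $T'$ by $T'(r,c)=(T(r,c)\setminus\{p\})\cup\{q\}$, leaving all other cells unchanged. The argument in Proposition~\ref{prop:theta-in-kkoh} that $T'\in\SVRT(w)$ and $\mathcal{I}(T')<\mathcal{I}(T)$ never uses pattern avoidance, so it applies for arbitrary $w$. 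By induction, $E'=\Theta_w(T')\in\SKKoh(w)$.

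It remains to show that $\Theta_w(T)$ arises from $E'$ by a simple move. That earlier proof shows three things: in $E'$ the cell $(q,c)$ is a box, the cell $(p,c)$ is empty, and $(q,c)$ is rightmost among occupied cells in row $q$. Because $p=q-1$, the target row of the move is $i'=\max\{s<q : (s,c)\text{ unoccupied}\}=p$. No occupied cells lie strictly between rows $p$ and $q$, so the ghost-blocking hypothesis holds vacuously. The move is therefore a legal ordinary or \K-Kohnert move of exactly one row, hence simple. It is an ordinary move when $q\notin T(r,c)$ and a \K-move when $q\in T(r,c)$. Its output is $\Theta_w(T)$, so $\Theta_w(T)\in\SKKoh(w)$.

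I expect no real obstacle. The only point to check carefully is that none of the verifications in Proposition~\ref{prop:theta-in-kkoh} quietly used closure under non-simple moves or 1432-avoidance. The step most worth a second look is the rightmostness argument, which uses only the SVRT conditions and the northwest property of $D(w)$, and so holds for every $w\in S_n$.
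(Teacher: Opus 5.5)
Your proposal is correct and is the same argument as the paper's. The paper simply cites Remark~\ref{rmk:simple_1}: the induction proving Proposition~\ref{prop:theta-in-kkoh} holds for every $w$ and only ever uses a move from row $p+1$ to row $p$, which is exactly what you check explicitly.
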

	
	\begin{proof}
		The assertion $\Theta_w(T)\in \KKoh(w)$ is Proposition~\ref{prop:theta-in-kkoh}. That $\Theta_w(T)\in \SKKoh(w)$ follows from its proof, as noted in Remark~\ref{rmk:simple_1}.
	\end{proof}
	
	\begin{lemma}
		\label{lem:simple_xi_codomain}
		For any $w\in S_n$ and $E\in \SKKoh(w)$, one has $\Xi_w(E)\in \SVRT(w)$.
	\end{lemma}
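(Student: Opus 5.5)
We argue by induction along a sequence of simple moves, reusing the analysis of the hard inclusion without any pattern-avoidance hypothesis. Let $E\in\SKKoh(w)$ and choose a sequence of simple ordinary/\K-Kohnert moves
\[(D(w),\varnothing)=F_0\longrightarrow F_1\longrightarrow\cdots\longrightarrow F_N=E.\]
Since $\SKKoh(w)\subseteq\KKoh(w)$, Lemma~\ref{lem:column-word-update} shows every $F_i$ is thread-compatible with $w$. Corollary~\ref{cor:column-threading} then shows $\Xi_w(F_i)$ is defined and satisfies the column-strict condition~\eqref{eq:col-cond} and the flag condition~\eqref{eq:flag-cond}. Neither result used 1432-avoidance. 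It therefore remains only to verify the row condition~\eqref{eq:row-cond} for every $\Xi_w(F_i)$.

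We do this by induction on $i$. For $i=0$, we have $\Xi_w(F_0)(r,c)=\{r\}$ for every $(r,c)\in D(w)$, so every row inequality holds with equality. Now suppose $\Xi_w(F_{i-1})$ satisfies the row condition but $\Xi_w(F_i)$ does not. Then $E=F_{i-1}$ and $E'=F_i$ place us in Setup~\ref{set:1432_setup}, with the box moving from row $q$ to row $p=q-1$. Lemma~\ref{lem:critical-equality} then produces an index $k$ with $0\leq k<M$, where $M=q-p-1=0$. No such $k$ exists, which is a contradiction. This is precisely the observation recorded in Remark~\ref{rmk:simple_2}. Hence every $\Xi_w(F_i)$ satisfies the row condition, and in particular $\Xi_w(E)\in\SVRT(w)$.

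The main point to double-check is that Lemma~\ref{lem:critical-equality}, together with Lemma~\ref{lem:local-update} on which it relies, is genuinely free of any 1432-avoidance hypothesis. They are: their proofs use only thread-compatibility and the rightmostness of the moved box. Pattern avoidance first enters only in Proposition~\ref{prop:threading}. One should also note that Setup~\ref{set:1432_setup} requires $E\in\KKoh(w)$, which holds because $\SKKoh(w)\subseteq\KKoh(w)$.
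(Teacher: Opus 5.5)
Your proposal is correct and matches the paper's proof: the column and flag conditions come from Corollary~\ref{cor:column-threading}, and the row condition propagates along the simple move sequence because Lemma~\ref{lem:critical-equality} would require $0\leq k<M=0$, as recorded in Remark~\ref{rmk:simple_2}. Your explicit base case, and your observation that $\SKKoh(w)\subseteq\KKoh(w)$ is what places you in Setup~\ref{set:1432_setup}, fill in details the paper leaves implicit.
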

	
	\begin{proof}
		Let $E\in\SKKoh(w)$, and choose a sequence of simple ordinary/K-Kohnert moves
		\[
			(D(w),\varnothing)=E_0\longrightarrow E_1\longrightarrow\cdots\longrightarrow E_N=E.
		\]
		Recall Corollary~\ref{cor:column-threading} shows that each $\Xi_w(E_m)$ is a well-defined set-valued filling of $D(w)$ satisfying both the column condition (\ref{eq:col-cond}) and the flag condition (\ref{eq:flag-cond}). To conclude $\Xi_w(E_m)\in\SVRT(w)$ for each $m$, it remains to check the row condition~(\ref{eq:row-cond}). Recall Remark~\ref{rmk:simple_2} notes that the row condition on $T'$ is implied by Lemma~\ref{lem:critical-equality} in the case of a simple move $E_m\longrightarrow E_{m+1}$.
	\end{proof}

	\begin{proposition}
		\label{prop:simple-svrt-bijection}
		For any permutation $w$, the map $\Theta_w$ is a weight-preserving bijection $\SVRT(w)\to \SKKoh(w)$ with inverse $\Xi_w$.
	\end{proposition}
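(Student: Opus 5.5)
The plan is to mirror the proof of Theorem~\ref{thm:bijection}, replacing $\KKoh(w)$ by $\SKKoh(w)$ and the two inclusion results by their simple-move analogues. No pattern avoidance is needed, since neither Lemma~\ref{lem:simple_theta_codomain} nor Lemma~\ref{lem:simple_xi_codomain} assumes it.

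First, I check that both maps are well-defined with the stated codomains. By Lemma~\ref{lem:theta-welldefined} and Lemma~\ref{lem:simple_theta_codomain}, $\Theta_w$ sends $\SVRT(w)$ into $\SKKoh(w)$. In the other direction, $\SKKoh(w)\subseteq\KKoh(w)$, since simple moves are particular ordinary or \K-Kohnert moves. Hence every $E\in\SKKoh(w)$ is thread-compatible with $w$ by Lemma~\ref{lem:column-word-update}, so $\Xi_w(E)$ is defined. Lemma~\ref{lem:simple_xi_codomain} then shows $\Xi_w(E)\in\SVRT(w)$. Weight preservation is exactly \eqref{eq:theta-weight}.

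Second, I verify that the maps are mutually inverse, using the same columnwise argument as in Theorem~\ref{thm:bijection}. Take $T\in\SVRT(w)$. Each set $T(r,c)$ becomes a single thread: a box in row $\min T(r,c)$ followed by ghosts in the remaining rows of $T(r,c)$. By the column condition \eqref{eq:col-cond}, distinct Rothe cells in column $c$ produce threads in disjoint, north-to-south ordered row ranges. So the threads of column $c$, read north to south, are assigned to the Rothe cells of column $c$ in order, and $\Xi_w(\Theta_w(T))=T$. Conversely, take $E\in\SKKoh(w)$. Applying $\Xi_w$ records each thread's row set in the corresponding Rothe cell, and $\Theta_w$ places the leading box and trailing ghosts back in their rows, giving $\Theta_w(\Xi_w(E))=E$.

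The only point needing care is the ordering claim in the first inverse identity. It requires that each box of $\Theta_w(T)$ starts a new thread and that no ghost precedes the first box in a column. Both follow from \eqref{eq:col-cond}, since all entries of an earlier cell are strictly smaller than all entries of a later one. Everything else is bookkeeping, so I do not expect a real obstacle. As a consequence, for $1432$-avoiding $w$, comparing with Theorem~\ref{thm:bijection} gives $\SKKoh(w)=\Theta_w(\SVRT(w))=\KKoh(w)$.
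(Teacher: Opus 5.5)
Your proposal is correct and is essentially the paper's argument. The paper also gets the two codomain inclusions from Lemma~\ref{lem:simple_theta_codomain} and Lemma~\ref{lem:simple_xi_codomain}, takes weight preservation from \eqref{eq:theta-weight}, and uses the same columnwise thread-matching reasoning as Theorem~\ref{thm:bijection} for $\Theta_w$ and $\Xi_w$ being mutually inverse; you simply make explicit details the paper leaves implicit in calling the result immediate, such as $\SKKoh(w)\subseteq\KKoh(w)$ giving thread-compatibility.
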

	\begin{proof}
		Immediate from Lemma~\ref{lem:simple_theta_codomain} and Lemma~\ref{lem:simple_xi_codomain}.
	\end{proof}
	
	Denote the \newword{single-valued Rothe tableaux} of $w$ by
	\[\SRT(w) = \{T\in\SVRT(w)\mid |\wt(T)|=\ell(w)\}.\]
	
	\begin{corollary}
		\label{cor:simple-srt-bijection}
		For every permutation $w\in S_n$, the restriction of $\Theta_w$ gives a bijection $\SRT(w)\to\SKoh(w)$.
	\end{corollary}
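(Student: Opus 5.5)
The plan is to restrict the bijection of Proposition~\ref{prop:simple-svrt-bijection} to the subsets with no ghosts and no multi-element entries. First I will show that $\Theta_w$ sends $\SRT(w)$ into $\SKoh(w)$. If $T\in\SRT(w)$, then every $T(r,c)$ is a singleton, because $|\wt(T)|=\sum_{(r,c)}\#T(r,c)=\ell(w)=\#D(w)$ and each set is nonempty. Hence $G_T=\varnothing$.

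To see that $\Theta_w(T)$ is reachable by simple ordinary Kohnert moves, I will rerun the induction in Proposition~\ref{prop:theta-in-kkoh} for this $T$. In the construction of $T'$, the new entry $T'(r,c)=(T(r,c)\setminus\{p\})\cup\{q\}$ is again a singleton, so $T'\in\SRT(w)$. The move $E'\to E$ is a \K-Kohnert move only when $q\in T(r,c)$, which cannot happen since $T(r,c)=\{p\}$. So every step in the inductive chain is a simple ordinary Kohnert move, and the base case is $D(w)$ itself. Thus $\Theta_w(T)\in\SKoh(w)$, where we identify ghost-free grave diagrams with diagrams as in Definition~\ref{def:ghost_k_koh_moves}.

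Conversely, I will show that $\Xi_w$ sends $\SKoh(w)$ into $\SRT(w)$. We have $\SKoh(w)\subseteq\SKKoh(w)$, and ordinary Kohnert moves never create ghosts, so every $E\in\SKoh(w)$ has $G=\varnothing$. By Lemma~\ref{lem:simple_xi_codomain}, $\Xi_w(E)\in\SVRT(w)$. Each thread of $E$ is a single box, so every entry of $\Xi_w(E)$ is a singleton, and $|\wt|=\#D(w)=\ell(w)$. Therefore $\Xi_w(E)\in\SRT(w)$.

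Since $\Theta_w$ and $\Xi_w$ are mutually inverse on the larger sets (Proposition~\ref{prop:simple-svrt-bijection}), their restrictions are mutually inverse bijections between $\SRT(w)$ and $\SKoh(w)$. The only delicate point is the first step: showing that the $\SKKoh$ move sequence built from an $\SRT$ tableau uses no \K-moves. This needs the observation above that the induction never leaves $\SRT(w)$.
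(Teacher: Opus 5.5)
Your proof is correct and follows the paper's intended route: the corollary is the restriction of Proposition~\ref{prop:simple-svrt-bijection} to single-valued tableaux on one side and ghost-free diagrams on the other. Rerunning the induction is valid but not needed, because ordinary and \K-Kohnert moves never delete ghosts, so any ghost-free element of $\SKKoh(w)$ (such as $\Theta_w(T)$ for $T\in\SRT(w)$) can only be reached by simple ordinary Kohnert moves and therefore lies in $\SKoh(w)$.
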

	
	We recall another result of Fan--Guo.
	
	\begin{theorem}[{\cite[Theorem 2.13]{fg_svrt}}]
		\label{thm:fg_1432_schub}
		If $w$ contains a 1432 pattern, then
		\[\mathfrak{S}_w\neq \sum_{T\in \SRT(w)} x^{\wt(T)}.\]
	\end{theorem}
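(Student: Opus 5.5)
By Corollary~\ref{cor:simple-srt-bijection}, $\Theta_w$ restricts to a weight-preserving bijection $\SRT(w)\to\SKoh(w)$. Hence
\[\sum_{T\in\SRT(w)}x^{\wt(T)}=\sum_{E\in\SKoh(w)}x^{\wt(E)}.\]
Since $\SKoh(w)\subseteq\Koh(w)$, Theorem~\ref{thm:kohnert_rule} gives $\mathfrak{S}_w=\sum_{E\in\Koh(w)}x^{\wt(E)}$. Both sums have nonnegative coefficients and one index set contains the other, so the statement is equivalent to the strict containment $\SKoh(w)\subsetneq\Koh(w)$. By Lemma~\ref{lem:simple_xi_codomain}, $\Xi_w$ sends every element of $\SKoh(w)$ into $\SVRT(w)$. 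It therefore suffices to exhibit one ordinary Kohnert diagram $E\in\Koh(w)$ such that $\Xi_w(E)$ violates the row condition~\eqref{eq:row-cond}. That $E$ cannot lie in $\SKoh(w)$.

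To build $E$, I would generalize Example~\ref{ex:1432} by essentially running the proof of Proposition~\ref{prop:threading} backwards. Take a $1432$-occurrence $a<b<s<t$ with $w(a)<w(t)<w(s)<w(b)$, and set $c=w(t)$, $d=w(s)$. By the Rothe membership criterion, $(b,c)$, $(s,c)$ and $(b,d)$ all lie in $D(w)$, and $(a,c)\notin D(w)$ because $w(a)<c$. I would choose the occurrence extremally: take $c$ as large as possible, then $t$ as small as possible, then $s$ as large and $b$ as small as possible, and finally $a$ as large as possible with $w(a)<c$. The aim of these choices is that column $c$ of $D(w)$ between rows $a$ and $s$ consists exactly of a block of consecutive cells, and that the rows in between have no cells to the right that would block moves.

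The moves themselves would go as follows. First use ordinary Kohnert moves to push the boxes in column $c$ at rows strictly between $a$ and $s$ up into a contiguous block, leaving a hole in row $p$ just above them. Then, with everything to its right in row $s$ cleared, move the box $(s,c)$ up in a single jump into row $p$, passing over that block. After this jump the thread assigned to the Rothe cell $(b,c)$ shifts up one row, exactly as in Lemma~\ref{lem:local-update}. Meanwhile the cell $(b,d)$, with its entry unchanged, keeps $\max\,\Xi_w(E)(b,d)$ one larger than the new $\min\,\Xi_w(E)(b,c)$. That is a failure of \eqref{eq:row-cond}, exactly as in Lemma~\ref{lem:critical-equality}.

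The main obstacle is the legality of this configuration: before the long jump, $(s,c)$ must be the rightmost occupied cell of its row, and the target row $p$ must be empty in column $c$. I would first try to arrange this by moving the boxes of row $s$ to the right of column $c$ upward beforehand, and by using the extremal choice of the pattern to rule out obstructions. If a direct construction becomes messy, the fallback is a minimality argument: pass to the $1432$-pattern with the fewest Rothe cells strictly inside its span, and verify the moves there.
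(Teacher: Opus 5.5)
Your reduction is correct, and it is a genuinely different route from the paper. The paper gives no proof of this statement; it quotes Fan--Guo. It also runs the logic in the opposite direction, combining this theorem with Corollary~\ref{cor:simple-srt-bijection} to get $\SKoh(w)\neq\Koh(w)$ in Theorem~\ref{thm:1432_kohnert_characterization}. Corollary~\ref{cor:simple-srt-bijection} and Lemma~\ref{lem:simple_xi_codomain} do not depend on Fan--Guo, so proving $\SKoh(w)\subsetneq\Koh(w)$ directly is not circular.

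\textbf{The gap.} Your reduction is an equivalence: for $E\in\Koh(w)$, one has $E\notin\SKoh(w)$ exactly when $\Xi_w(E)$ violates \eqref{eq:row-cond}. So the entire content of the theorem lies in producing $E$, and that step is not carried out. As written, it also does not work.
\begin{itemize}
\item Once $c$ is chosen, $t=w^{-1}(c)$ is forced, so ``then $t$ as small as possible'' is not a real choice.
\item Taking $s$ large and $b$ small widens, rather than tightens, the window between $b$ and $s$, which is where the obstructions live.
\item The structure you want the extremal choice to deliver fails even when the pattern is unique. For $w=15243$ the only occurrence is $(a,b,s,t)=(1,2,4,5)$, with $c=3$ and $d=4$. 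Column $3$ of $D(w)$ is $\{(2,3),(4,3)\}$, so there is a hole in row $3$ between $b$ and $s$. The box $(4,3)$ cannot reach row $1$ in one move: its first move goes to row $3$.
\end{itemize}
The ``legality'' issue you defer is precisely the unproved part.

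\textbf{A choice that closes the gap.} Among all occurrences, minimize $s-b$, then maximize $c$, then maximize $a$.
\begin{itemize}
\item Rows strictly between $b$ and $s$. Take $b<r<s$. If $w(r)>d$, then $(a,r,s,t)$ is an occurrence with a smaller gap. If $c<w(r)<d$, then $(a,b,r,t)$ is one. Hence $w(r)<c$. So $(r,c)\notin D(w)$, and row $r$ of $D(w)$ has no cells in columns $\geq c$.
\item Row $s$. A cell $(s,j)\in D(w)$ with $j>c$ would give the occurrence $(a,b,s,w^{-1}(j))$ with larger $c$. So $(s,c)$ is rightmost in row $s$.
\item Rows $a$ through $b$. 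Maximality of $a$ forces $w(r)>c$, i.e.\ $(r,c)\in D(w)$, for $a<r\leq b$. Meanwhile $(a,c)\notin D(w)$.
\end{itemize}
Now move the box $(s,c)$ up by simple moves to row $b+1$; one further move sends it to row $a$. In the resulting $E'$, the column-$c$ threads assigned to $(a+1,c),\ldots,(b,c),(s,c)$ occupy rows $a,\ldots,b-1,b$. So $\Xi_w(E')(b,c)=\{b-1\}$. Column $d$ is untouched and $(b,d)\in D(w)$, so $\Xi_w(E')(b,d)=\{b\}$. This violates \eqref{eq:row-cond}. Together with your reduction, this proves the theorem independently of Fan--Guo. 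It specializes to Example~\ref{ex:1432} and handles $w=15243$.
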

	
	\begin{theorem}
		\label{thm:1432_kohnert_characterization}
		For $w\in S_n$, the following are equivalent:
		\begin{enumerate}[label=\textup{(\roman*)}]
			\item $w$ is 1432-avoiding;
			\item $\SKKoh(w)=\KKoh(w)$.
		\end{enumerate}
	\end{theorem}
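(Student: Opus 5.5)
The forward implication (i)$\Rightarrow$(ii) will follow directly from the bijections already established. Suppose $w$ is 1432-avoiding. By Theorem~\ref{thm:bijection}, $\Theta_w\colon\SVRT(w)\to\KKoh(w)$ is a bijection. By Proposition~\ref{prop:simple-svrt-bijection}, $\Theta_w\colon\SVRT(w)\to\SKKoh(w)$ is also a bijection. Both are the same map with the same domain, so their images coincide:
\[\SKKoh(w)=\Theta_w(\SVRT(w))=\KKoh(w).\]
Alternatively, one can note that $\SKKoh(w)\subseteq\KKoh(w)$ trivially, since simple moves are moves, and that the reverse inclusion follows because any $E\in\KKoh(w)$ equals $\Theta_w(\Xi_w(E))$ with $\Xi_w(E)\in\SVRT(w)$, and this lies in $\SKKoh(w)$ by Lemma~\ref{lem:simple_theta_codomain}.

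For the converse (ii)$\Rightarrow$(i), I argue by contrapositive. Suppose $w$ contains a 1432 pattern; we want to show $\SKKoh(w)\neq\KKoh(w)$. The plan is to pass to the box-only (degree $\ell(w)$) part and compare with Schubert polynomials.

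\textbf{Step 1.} Identify the degree-$\ell(w)$ elements of $\KKoh(w)$, meaning those with no ghosts, with $\Koh(w)$. Every \K-Kohnert move creates a ghost and ghosts never disappear under ordinary and \K-Kohnert moves, so a ghost-free element of $\KKoh(w)$ is reached using ordinary Kohnert moves only. On ghost-free diagrams these are exactly the classical Kohnert moves. Hence the ghost-free part of $\KKoh(w)$ is $\Koh(w)$. By the same reasoning, the ghost-free part of $\SKKoh(w)$ is $\SKoh(w)$.

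\textbf{Step 2.} Take generating functions. By Theorem~\ref{thm:kohnert_rule},
\[\sum_{D\in\Koh(w)}x^{\wt(D)}=\mathfrak{S}_w.\]
By Corollary~\ref{cor:simple-srt-bijection}, together with the weight preservation of $\Theta_w$,
\[\sum_{D\in\SKoh(w)}x^{\wt(D)}=\sum_{T\in\SRT(w)}x^{\wt(T)}.\]
Theorem~\ref{thm:fg_1432_schub} says these two sums differ when $w$ contains 1432. Therefore $\Koh(w)\neq\SKoh(w)$.

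\textbf{Step 3.} Conclude. By Step 1, the sets $\Koh(w)$ and $\SKoh(w)$ are the ghost-free parts of $\KKoh(w)$ and $\SKKoh(w)$ respectively. Since these parts differ by Step 2, we get $\SKKoh(w)\neq\KKoh(w)$.

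The main point needing care is Step 1, in particular the claim that a ghost-free diagram in $\SKKoh(w)$ arises only from simple ordinary moves. This holds because ghosts are permanent under the allowed moves: none of them removes a ghost. Once that is checked, the remainder is formal bookkeeping of generating functions.
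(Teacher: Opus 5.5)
Your proposal is correct and follows the paper's proof essentially verbatim. The forward direction is $\SKKoh(w)=\Theta_w(\SVRT(w))=\KKoh(w)$, from Proposition~\ref{prop:simple-svrt-bijection} and Theorem~\ref{thm:bijection}. The converse compares Kohnert's rule with Theorem~\ref{thm:fg_1432_schub} via Corollary~\ref{cor:simple-srt-bijection} to get $\Koh(w)\neq\SKoh(w)$. Your Step~1 observes that ghosts are never removed, so the ghost-free parts of $\KKoh(w)$ and $\SKKoh(w)$ are $\Koh(w)$ and $\SKoh(w)$; this simply makes explicit the ``in particular'' step that the paper leaves implicit.
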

	
	\begin{proof}
		Suppose first that \(w\) is 1432-avoiding. By Proposition~\ref{prop:simple-svrt-bijection} and Theorem~\ref{thm:bijection},
		\[
		\SKKoh(w)=\Theta_w(\SVRT(w)) = \KKoh(w).
		\]
		
		Conversely, suppose $w$ contains a 1432-pattern. By Kohnert's rule (Theorem~\ref{thm:kohnert_rule}),
		\[\mathfrak{S}_w=\sum_{E\in \Koh(w)} x^{\wt(E)}.\]
		But Theorem~\ref{thm:fg_1432_schub} and Corollary~\ref{cor:simple-srt-bijection} imply 
		\[\mathfrak{S}_w \neq \sum_{T\in \SRT(w)} x^{\wt(T)} = \sum_{E\in \SKoh(w)} x^{\wt(E)}.\]
		Hence there exists $E\in \Koh(w)\setminus \SKoh(w)$, so in particular $\SKKoh(w)\neq \KKoh(w)$.
	\end{proof}

	\FloatBarrier
	\clearpage
	
	\appendix
	
	\section{Diagram Justifications}
	\label{sec:appendix}
	
	For completeness, we use this section to provide explicit constructions of various ghost Kohnert diagrams referred to in Section~\ref{sec:conjectures}. For compactness, we freely omit bottommost empty rows and any empty columns from diagrams. We label a move $E\longrightarrow E'$ by the row index $i$ where the move occurs, adding a hat like $\widehat{i}$ to indicate a \K-Kohnert or ghost \K-Kohnert move.
	
	\begin{figure}[ht]
		\begin{center}
			\includegraphics[scale=1]{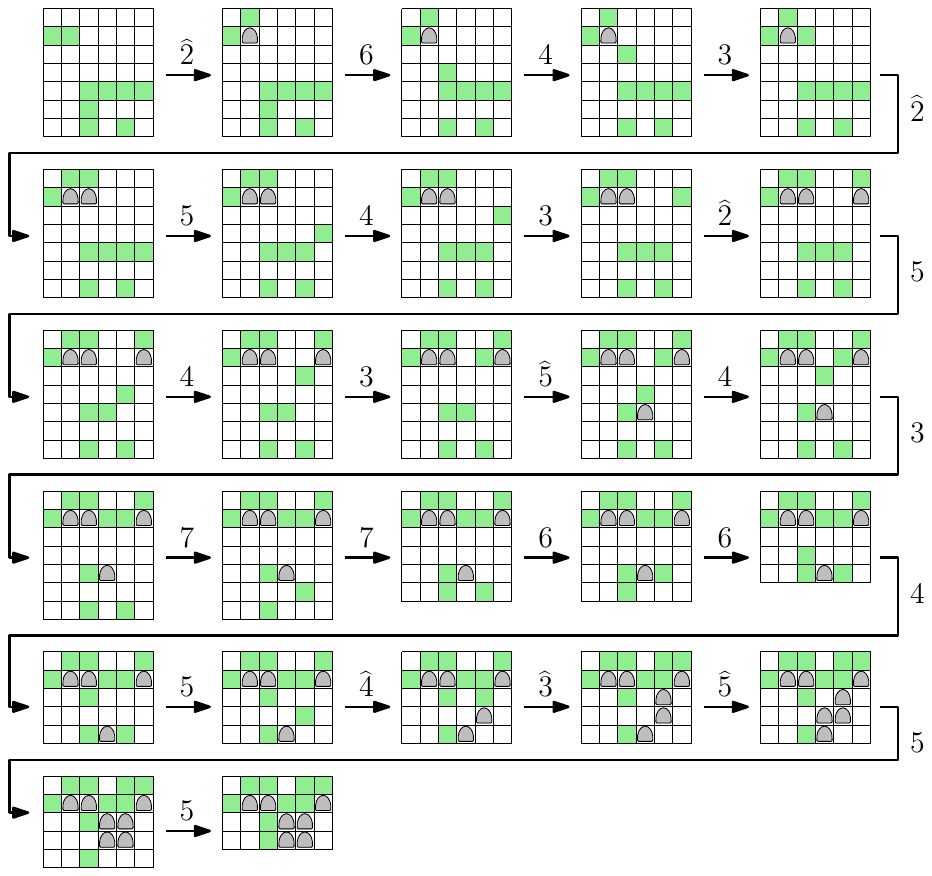}
			\caption{Achieving the left diagram of Figure~\ref{fig:142396857_4633_ghost_koh_overcounts}.}
			\label{fig:142396857_4633_ghost_koh_1_steps}
		\end{center}
	\end{figure}
	
	\begin{figure}[ht]
		\begin{center}
			\includegraphics[scale=1]{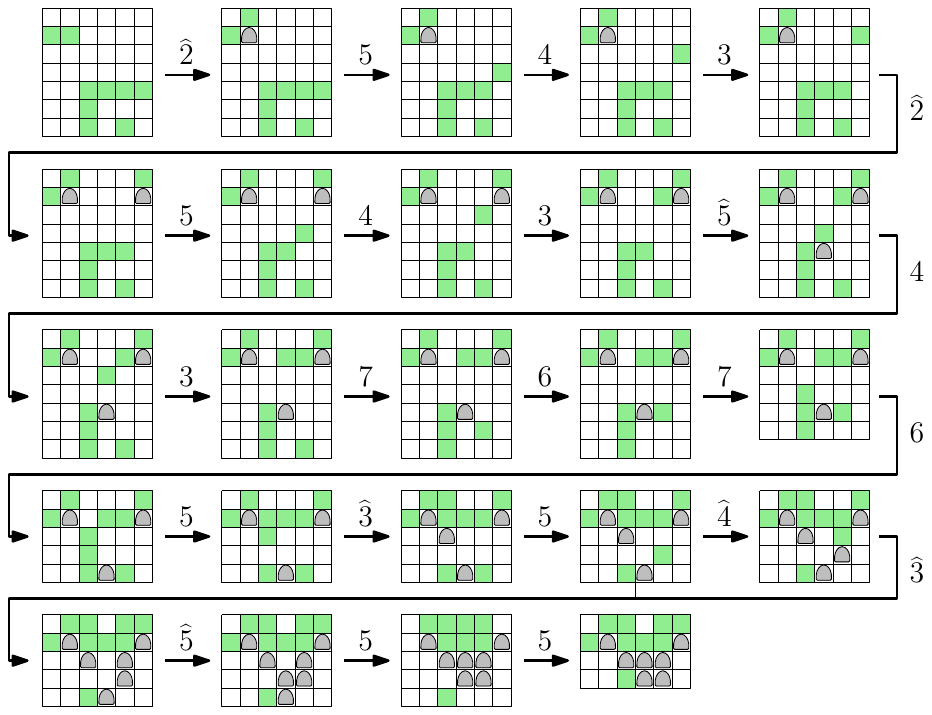}
			\caption{Achieving the right diagram of Figure~\ref{fig:142396857_4633_ghost_koh_overcounts}.}
			\label{fig:142396857_4633_ghost_koh_2_steps}
		\end{center}
	\end{figure}
	
	\begin{figure}[ht]
		\begin{center}
			\includegraphics[scale=1]{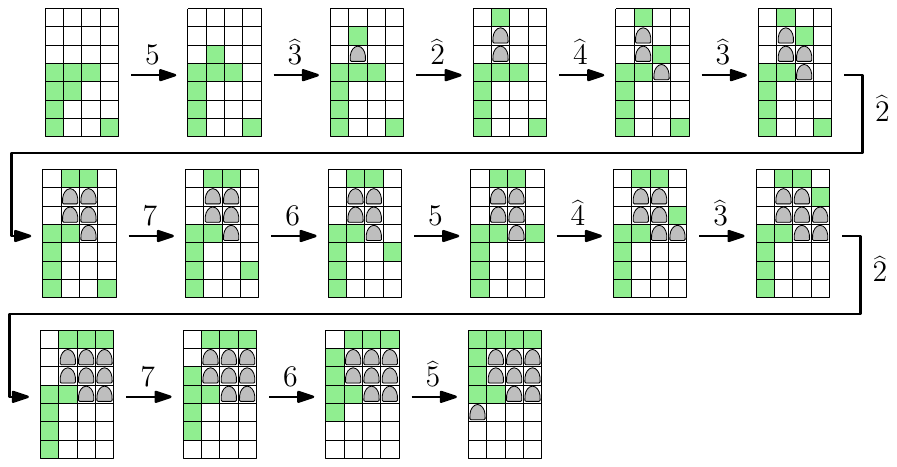}
			\caption{Achieving the left diagram of Figure~\ref{fig:123765948_44441_ghost_koh_undercounts}.}
			\label{fig:123765948_44441_ghost_koh_1_steps}
		\end{center}
	\end{figure}
		
	\begin{figure}[ht]
		\begin{center}
			\includegraphics[scale=1]{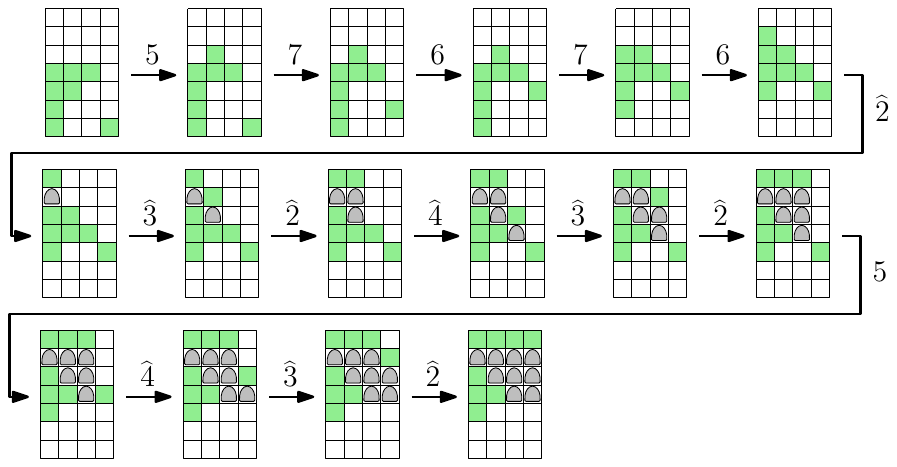}
			\caption{Achieving the right diagram of Figure~\ref{fig:123765948_44441_ghost_koh_undercounts}.}
			\label{fig:123765948_44441_ghost_koh_2_steps}
		\end{center}
	\end{figure}
	
	\begin{figure}[ht]
		\begin{center}
			\includegraphics[scale=1]{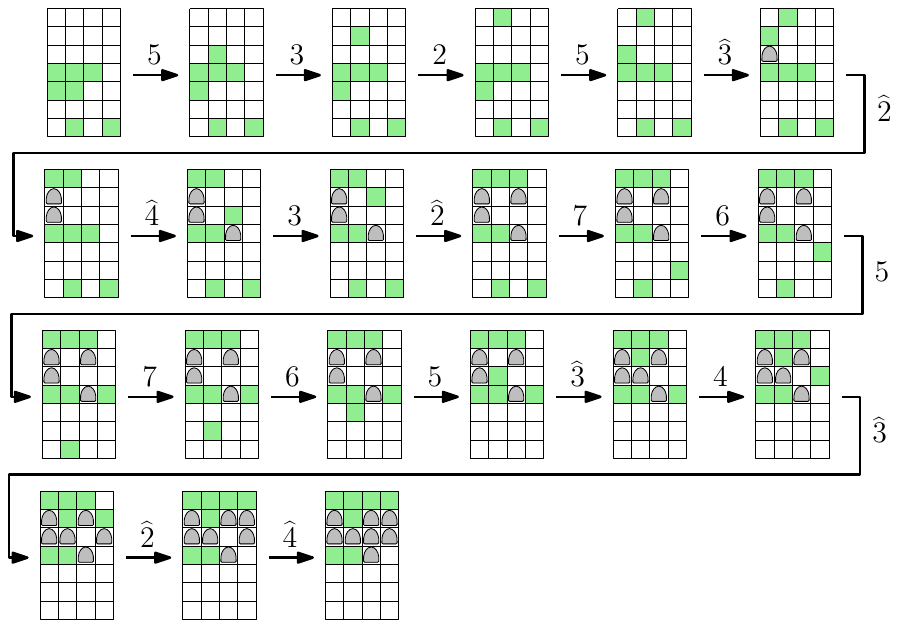}
			\caption{Achieving the diagram in Figure~\ref{fig:ry_support_correction}.}
			\label{fig:123764958_4443_support_ghost_koh}
		\end{center}
	\end{figure}
	
	\FloatBarrier
	
	\section*{Acknowledgements}
	
	The author is grateful to Colleen Robichaux for her insightful feedback on this paper, and for helpful conversations about \K-Kohnert rules. The author thanks Sam Castonguay for her support. The author also thanks Dave Anderson for his work developing the Julia package SchubertPolynomials.jl, which the author used in many computations referenced in this paper.
	
	\bibliographystyle{plain}
	\bibliography{k_kohnert_bibliography}
\end{document}